\numberwithin{equation}{section}
\newtheorem{theorem}{Theorem}[section]
\newtheorem{proposition}[theorem]{Proposition}
\newtheorem{corollary}[theorem]{Corollary}
\theoremstyle{definition}
\newtheorem{remark}[theorem]{Remark}
\theoremstyle{definition}
\theoremstyle{definition}
\newtheorem{definition}[theorem]{Definition}
\theoremstyle{definition}
\theoremstyle{definition}
\theoremstyle{definition}
\newcommand{\bsb}[1]{\boldsymbol{#1}}
\newcommand{\ggl}[1]{\mathfrak{g} \mathfrak{l}_{#1}}
\newcommand{\gso}[1]{\mathfrak{s} \mathfrak{o}_{#1}}
\newcommand{\ci}[0]{\mathcal{I}}
\renewcommand{\geq}{\geqslant}
\renewcommand{\leq}{\leqslant}
\newcommand{\rflat}{\raisebox{4pt}{\begin{tikzpicture}[scale=0.1] \draw[thick] (0,0) -- (3,0) -- (2,1) -- (1,1) -- (0,0);  \end{tikzpicture}}}
\newcommand{\symflat}{\begin{tikzpicture}[scale=0.07] \draw[thick] (0,0) -- (3,0) -- (2,1) -- (1,1) -- (0,0); \draw[thick] (0,0) -- (0,3) -- (1,2) -- (1,1) -- (0,0); 
\end{tikzpicture}}
\newcommand{\symtrapz}{\begin{tikzpicture}[scale=0.1] \draw[thick] (0,0) -- (2,0) -- (2,1) -- (1,1) -- (0,0); \draw[thick] (0,0) -- (0,2) -- (1,2) -- (1,1) -- (0,0); 
\end{tikzpicture}}
\newcommand{\personalcirc}{\begin{tikzpicture}[scale=0.75]
	 \draw   circle(2pt);
	\end{tikzpicture}}
\title{An identity in distribution between full-space  and half-space log-gamma polymers}
\author[G. Barraquand]{Guillaume Barraquand}
\address{G. Barraquand,
Laboratoire de Physique de l'{\'e}cole normale sup\'erieure, ENS, Universit{\'e} PSL, CNRS, Sorbonne Universit{\'e}, Universit{\'e} de Paris, Paris, France}
\email{guillaume.barraquand@ens.fr}
\author[S. Wang]{Shouda Wang}
\address{Shouda Wang,  \'Ecole Polytechnique, Institut Polytechnique de Paris,  Route de Saclay, F-91120 Palaiseau Cedex, France and 
Laboratoire de Physique de l'{\'e}cole normale sup\'erieure, ENS, Universit{\'e} PSL, CNRS, Sorbonne Universit{\'e}, Universit{\'e} de Paris, Paris, France} \email{shoudawang@princeton.edu}
\begin{document} 
\pagenumbering{arabic}

\renewcommand{\thesubfigure}{\Alph{subfigure}}
\maketitle 
\begin{abstract}
    We prove an identity in distribution between two kinds of partition functions for the log-gamma directed polymer model: (1) the point-to-point partition function in a quadrant, (2) the point-to-line partition function in an octant. 
    As an application, we prove that the point-to-line free energy of the log-gamma polymer in an octant obeys a phase transition depending on the strength of the noise along the boundary. This transition of (de)pinning by randomness was first predicted in physics by Kardar in 1985 and proved rigorously for zero temperature models by Baik and Rains in 2001. While it is expected to arise universally for models in the Kardar-Parisi-Zhang universality class, this is the first positive temperature model for which this transition can be rigorously established.
\end{abstract}
\setcounter{tocdepth}{1}
\tableofcontents

 \section{Introduction and main results} 
\subsection{Preface}
 Models of directed polymers in $1+1$ dimensions confined to a half-space are expected to obey a phase transition depending on the strength of the noise along the boundary. This phenomenon was predicted in physics by Kardar in \cite{kardar1985depinning} and termed depinning by quenched randomness. A more precise understanding of this transition came from the study of specific exactly solvable models by  Baik and Rains \cite{BaikRains,BaikRains2}. When the strength of the disorder on the boundary is below a certain threshold, the asymptotic  behaviour of the free energy is the same as without a boundary: fluctuations are governed by  a one third scaling exponent and statistics related to random matrix theory, as usually for models in the Kardar-Parisi-Zhang (KPZ) universality class.   Above this threshold, free energy fluctuations are Gaussian distributed on a square root scale, as if fluctuations were only determined by the noise along the boundary.   
 
 \medskip 
 
 In order to establish these results, \cite{BaikRains} applied the RSK correspondence on random matrices satisfying various symmetries. For each symmetry type, they computed the exact distribution of the RSK shape in terms of Schur functions. This enabled them to   study in particular an exactly solvable model of last passage percolation, i.e., a directed polymer at zero temperature, where the  symmetry imposed on the weights of the model is equivalent to confining polymer paths in a half-space (an octant of the $\mathbb Z^2$ lattice). The distribution of the free energy can then be characterized by determinantal and Pfaffian formulas, which can be asymptotically analyzed \cite{BaikRains2}. 
 
 \medskip
 
 All models studied in \cite{BaikRains, BaikRains2} (applications of \cite{BaikRains, BaikRains2} are reviewed in \cite{baik1999symmetrized}) can be considered as free-fermionic models. Positive temperature models are however typically not free-fermionic and arguably more difficult to study. Some of the results of Baik and Rains have been lifted to the positive temperature setting in \cite{OSZ, BisiZygouras, HalfSpaceMac, bisi2020geometric}, using the geometric RSK correspondence instead of the usual one, and Whittaker functions instead of Schur functions. These works provide concise exact formulas for observables of interest of various models, but none of these formulas seem to be readily amenable for asymptotic analysis -- see e.g. the discussion in \cite[Sections 1.4 and 8]{HalfSpaceMac}. Thus, a proof of a depinning phase transition for a half-space polymer model at positive temperature, or any model in the KPZ class that is not free-fermionic,  has remained out of reach. 
 
 \medskip 
 
 In this paper, we consider the point-to-line partition function (and more generally the point-to-half-line partition function) for a log-gamma polymer model in an octant. We prove an identity in distribution between those  partition functions and point-to-point partition functions in a quadrant. Relying on known asymptotics for the latter \cite{BCR13, KQ18, BCD}, we prove that the point-to-line partition function in an octant obeys a phase transition, depending on the parameter of weights along the boundary. \footnote{We remark that earlier progresses towards a proof of this phase transition \cite{OSZ, HalfSpaceMac} focused on the point-to-point partition function, which is expected to obey a similar phase transition with slightly different statistics. See predictions based on non-rigorous asymptotics in \cite{HalfSpaceMac}.} The identity in distribution is valid for inhomogenous weights distributions, depending on two families of inhomogeneity parameters. The proof is based on applying the geometric RSK algorithm to symmetric random matrices with inverse-gamma distributed weights, and computing the distribution of observables of interest as integrals involving $\ggl{n}(\mathbb R)$-Whittaker functions and their orthogonal analogues. This generalizes a similar computation in \cite{BisiZygouras}. The identity in distribution between the two partition functions may finally be recognized after transforming formulas using the spectral theory associated with Whittaker functions and comparing the Laplace transform formula for the point-to-line partition function to the one for the point-to-point partition function \cite{COSZ}.

 \subsection{The log-gamma polymer} 
 Let us define the log-gamma directed polymer model, first introduced in \cite{Seppa}. 

\begin{definition}[Partition function of log-gamma polymers]
Let $\mathcal{I}\subset \mathbb{N}\times\mathbb N$ be an index set. Let $(\theta_{i,j})_{(i,j)\in\mathcal{I}}$ be a parameter array with positive entries $\theta_{i,j}>0$ for all $(i,j)\in\mathcal{I}$.
We define the weight array of the log-gamma polymer with index set $\mathcal{I}$ and parameter array $(\theta_{i,j})$ as a family of independent inverse-gamma random variables $(W_{i,j})_{(i,j)\in\mathcal{I}}$ such that  
$W_{i,j}\sim \operatorname{Gamma}^{-1}(\theta_{i,j})$ (equivalently $W_{i,j}^{-1}\sim \operatorname{Gamma}(\theta_{i,j})$) for $(i,j)\in\mathcal{I}$. More precisely, the density function of $W_{i,j}$ is given by 
$$
f_{W_{i,j}}(x)=\frac{1}{\Gamma(\theta_{i,j})}x^{-\theta_{i,j}-1}e^{-\frac{1}{x}} ,\quad \text{for}\quad x>0.
$$
Suppose that $(1,1)$ and $(n,N)\in\ci$.
We define the (point-to-point) partition function of the log-gamma polymer via 
\begin{equation}\label{defZ}
 {Z(n,N)} = \sum_{\pi:(1,1)\to(n,N)}\prod_{(i,j)\in\pi}W_{i,j}.
\end{equation} 
where the sum is over all up-right paths from $(1,1)$ to $(n,N)$ confined in $\ci$. See Figure \ref{fullspaceloggamma}. We are particularly interested in the following two types of domains of $\mathbb Z^2$ and their corresponding index set: 
\begin{enumerate}
    \item[(\romannum 1)] Rectangular domains, with  index set $\ci=\{(i,j)\mid 1\leq i\leq n,1\leq j\leq m\}$ for some $n,m$, will be used to define \textit{full-space point-to-point partition functions}  as \eqref{defZ}. 
    \item[(\romannum 2)] Trapezoidal domains, where in particular the paths are confined inside an octant, with  index set $ \ci =\{(i,j)\mid 1\leq i\leq n, i\leq j\leq 2n+m-i+1 \}$ for some $n,m$, will be used to define \textit{trapezoidal point-to-line partition functions}. 
\end{enumerate}
\end{definition}
\begin{definition}\label{def:ptl}
 For trapezoidal domains with index set  $ \ci =\{(i,j)\mid 1\leq i\leq n, i\leq j\leq 2n+m-i+1 \}$,
 we define the trapezoidal point-to-line partition function via
\begin{equation}\label{defZflat}
Z^{ \rflat}(n;m):=\sum_{  \substack{1\leq k\leq n \\ \pi :(1,1)\to(k,2n-k+m+1)}  } \prod_{(i, j) \in \pi} W_{i,j}=\sum_{  \substack{1\leq k\leq n } } Z(k,2n-k+m+1) ,
\end{equation} 
where the superscript  \begin{tikzpicture}[scale=0.1] \draw[thick] (0,0) -- (3,0) -- (2,1) -- (1,1) -- (0,0);  \end{tikzpicture} is used to distinguish it from point-to-point partition functions.
 \end{definition}
 When $m=0$, the point-to-line partition function corresponds to the sum of weights of all paths of length $2n-1$ required to remain inside an octant of the $\mathbb Z^2$ lattice (see Figure \ref{Fig1B}). For arbitrary $m$, the point-to-line partition function can be thought of as a sum of products of weights over all up-right paths starting at the point $(1,1)$ and ending at any of the points on the right border of the trapezoid, as shown in Figure \ref{halfspaceparam}.

 \begin{figure}
 \centering
 \begin{subfigure}{.44\textwidth}
     \centering\begin{tikzpicture}[scale=0.6] 
     
		\draw[->,thick, gray] (0,0) -- (10, 0);
		\draw (-.5, 4) node{{\footnotesize $i$}};
		\draw (7, -.5) node{{\footnotesize $j$}};
		\draw[ ->,thick,   gray] (0,0) -- (0, 5.5); 
		\draw (-.5, -.2) node{{\footnotesize $(1,1)$}};
		\draw (9.6, 5.3) node{{\footnotesize $(n,N)$}};
		\clip (0,0) -- (9.5,0) -- (9.5,5.5) -- (0, 5.5) -- (0,0);
		\draw[dotted, gray] (0,0) grid (10,6);
		
		\draw[ultra thick] (0,0) -- (1,0) -- (1,1) -- (1,2) -- (2,2) -- (2,3) -- (2,4) -- (3,4) -- (4,4) -- (5,4) -- (6,4) -- (7,4) -- (8,4) -- (8,5) -- (9,5);
		\end{tikzpicture}
			\caption{$\ci=\{(i,j)\mid 1\leq i\leq 6,1\leq j\leq 10\}$, $(n,N)=(6,10)$.} 
 \end{subfigure} 
 \hfill
 \begin{subfigure}{.5\textwidth}
     \centering\begin{tikzpicture}[scale=0.6]
     
		\draw[->, thick, gray] (0,0) -- (12, 0);
		\draw (-.5, 4) node{{\footnotesize $i$}};
		\draw (7, -.5) node{{\footnotesize $j$}};
		\draw[->,thick,   gray] (0,0) -- (0, 5.5); 
		\draw[thick,   gray] (0,0) -- (5, 5); 
		\draw (-.5, -.2) node{{\footnotesize $(1,1)$}};
		\draw (7.9, 4.6) node{{\footnotesize $(n,N)$}};
		
		 \foreach \n in {0,1,2,3,4,5}\filldraw [black] (11-\n,\n) circle(0.07);
		\clip (0,0) -- (11,0) -- (5.5, 5.5) -- (0,0);
		\draw[dotted, gray] (0,0) grid (11,6);
		
		\draw[ultra thick] (0,0) -- (1,0) -- (1,1) --(3,1) -- (3,3) -- (5,3) -- (5,4) -- (7,4);
		\end{tikzpicture}
		
	\caption{$\ci=\{(i,j)\mid 1\leq i\leq 6, i\leq j\leq 13-i\}$, $(n,N)=(5,8)$. } 
	\label{Fig1B}
	\end{subfigure}
	\caption{(A). A full-space log-gamma polymer; (B). A half-space log-gamma polymer  (paths are restricted to an octant of the $\mathbb Z^2$ lattice). Thick points in (B) are the endpoints of paths that appear in the sum in the definition of point-to-line partition functions \eqref{defZflat}. In both figures, the thick line represents an up-right path. We have drawn the point $(n,N)$ with $n$ being the vertical coordinate and $N$ being the horizontal one in order to be consistent with usual matrix conventions below.}
	\label{fullspaceloggamma}
 \end{figure}

  \subsection{An identity in distribution} 
  Our main result is an identity in distribution between the full-space point-to-point log-gamma polymer partition function and the trapezoidal point-to-line log-gamma polymer partition function. The identity   holds when weights depend on the same two families of inhomogeneity parameters. We make this parametrization explicit now. 
  
 \begin{definition}\label{nota} 
Suppose that $n\geq 1,m\geq 0$ are two   integers  and that $\alpha_{\circ}$, $\bsb{\alpha}=(\alpha_1,\dots,\alpha_n)$ and  $\bsb{\beta}=(\beta_1,\dots,\beta_m)$ satisfy  $\alpha_{i}+\alpha_{\circ}>0$ for all $1\leq i\leq n$,
 $\alpha_{i}+\alpha_j>0$ for all $1\leq i, j\leq n$ and $\alpha_i+\beta_k>0$ for all $1\leq i\leq n, 1\leq k\leq m$.

 Let $Z(n,n+m+1)$ be  the full-space  point-to-point log-gamma polymer partition function, defined by \eqref{defZ} with  the following choice of parameters    (see also Figure \ref{fullspaceparam})
\begin{equation} \label{paramFull}
 W_{i,j}\sim\left\{\begin{array}{ll}
 \operatorname{Gamma}^{-1}(\alpha_{i}+\alpha_{\circ}) ,&  j=1, \\
  \operatorname{Gamma}^{-1}(\alpha_{i}+\alpha_{j-1}) ,& 2\leq j \leq n+1, \\
\operatorname{Gamma}^{-1}\left(\alpha_{i}+\beta_{j-n-1}\right) ,& n+2\leq j\leq n+m+1 .
\end{array}\right. 
\end{equation}
 We recall that the random variables $ W_{i,j}$ are assumed to be independent.

Let $Z^{\rflat}(n;m)$ be the trapezoidal point-to-line partition function  of the log-gamma polymer defined by \eqref{defZflat} with   the following parametrization  (see  Figure \ref{halfspaceparam})
\begin{equation}\label{paramHal}
 W_{i, j} 
\sim\left\{\begin{array}{ll}
\operatorname{Gamma}^{-1}\left(\alpha_{i}+\alpha_{\circ}\right), & 1 \leq i=j \leq n, \\
\operatorname{Gamma}^{-1}\left(\alpha_{i}+\alpha_{j}\right), & 1 \leq i<j \leq n, \\
\operatorname{Gamma}^{-1}\left(\alpha_i+\beta_{j-n}\right), & 1\leq i\leq n, n<j\leq n+m, \\
\operatorname{Gamma}^{-1}\left(\alpha_{i}+\alpha_{2 n+m-j+1}\right), & 1 \leq i \leq n, n+m<j \leq 2 n+m-i+1.
\end{array}\right.
\end{equation} 
Again,   the random variables $ W_{i, j}$ are assumed to be independent. 

\end{definition} 
 
 \begin{figure}
 \centering
 \begin{subfigure}{.44\textwidth}
     \centering\begin{tikzpicture}[scale=0.6]
		\draw (-.5, -.4) node{{\footnotesize $(1,1)$}}; 
		\draw (10.8, 5.8) node{{\footnotesize $(n,n+m+1)$}};
		\draw[->] (9.2,    3.1) node[anchor=south]{{ $   \alpha_i + \beta_k$}} to[bend left] (8,2); 
		\draw[->] (3.2,   3.1) node[anchor=south]{{ $   \alpha_i + \alpha_j$}} to[bend left] (2,2); 
		\draw[->] (-.6,    3.2) node[anchor=south]{{ $   \alpha_i + \alpha_{\circ}$}} to[bend right] (0,2);
		\clip (-0.5,-0.5) -- (9.5,-0.5) -- (9.5,5.5) -- (-0.5, 5.5) -- (-0.5,-0.5);
		\draw[dotted, gray] (0,0) grid (10,6);
		
		
		
 \coordinate (1) at (0,0); 
 \coordinate (2) at (0,1);
 \coordinate (3) at (0,2);
 \coordinate (4) at (0,3);
 \coordinate (5) at (0,4);
 \coordinate (6) at (0,5);
 \foreach \n in {1,2,3,4,5,6} \fill [green] (\n)
   circle (2pt) node [below] {}; 
   \coordinate (1) at (1,0);
 \coordinate (2) at (1,1);
 \coordinate (3) at (1,2);
 \coordinate (4) at (1,3);
 \coordinate (5) at (1,4);
 \coordinate (6) at (1,5);
 \foreach \n in {1,2,3,4,5,6} \fill [red] (\n)
   circle (2pt) node [below] {}; 
    \coordinate (1) at (6,0);
 \coordinate (2) at (6,1);
 \coordinate (3) at (6,2);
 \coordinate (4) at (6,3);
 \coordinate (5) at (6,4);
 \coordinate (6) at (6,5);
 \foreach \n in {1,2,3,4,5,6} \fill [red] (\n)
   circle (2pt) node [below] {}; 

    \coordinate (1) at (2,0);
 \coordinate (2) at (2,1);
 \coordinate (3) at (2,2);
 \coordinate (4) at (2,3);
 \coordinate (5) at (2,4);
 \coordinate (6) at (2,5);
 \foreach \n in {1,2,3,4,5,6} \fill [red] (\n)
   circle (2pt) node [below] {};

    \coordinate (1) at (3,0);
 \coordinate (2) at (3,1);
 \coordinate (3) at (3,2);
 \coordinate (4) at (3,3);
 \coordinate (5) at (3,4);
 \coordinate (6) at (3,5);
 \foreach \n in {1,2,3,4,5,6} \fill [red] (\n)
   circle (2pt) node [below] {};

    \coordinate (1) at (4,0);
 \coordinate (2) at (4,1);
 \coordinate (3) at (4,2);
 \coordinate (4) at (4,3);
 \coordinate (5) at (4,4);
 \coordinate (6) at (4,5);
 \foreach \n in {1,2,3,4,5,6} \fill [red] (\n)
   circle (2pt) node [below] {}; 
   
    \coordinate (1) at (5,0);
 \coordinate (2) at (5,1);
 \coordinate (3) at (5,2);
 \coordinate (4) at (5,3);
 \coordinate (5) at (5,4);
 \coordinate (6) at (5,5);
 \foreach \n in {1,2,3,4,5,6} \fill [red] (\n)
   circle (2pt) node [below] {}; 
   
    \coordinate (1) at (7,0);
 \coordinate (2) at (7,1);
 \coordinate (3) at (7,2);
 \coordinate (4) at (7,3);
 \coordinate (5) at (7,4);
 \coordinate (6) at (7,5);
 \foreach \n in {1,2,3,4,5,6} \fill [blue] (\n)
   circle (2pt) node [below] {}; 
   
    \coordinate (1) at (8,0);
 \coordinate (2) at (8,1);
 \coordinate (3) at (8,2);
 \coordinate (4) at (8,3);
 \coordinate (5) at (8,4);
 \coordinate (6) at (8,5);
 \foreach \n in {1,2,3,4,5,6} \fill [blue] (\n)
   circle (2pt) node [below] {};
   
    \coordinate (1) at (9,0);
 \coordinate (2) at (9,1);
 \coordinate (3) at (9,2);
 \coordinate (4) at (9,3);
 \coordinate (5) at (9,4);
 \coordinate (6) at (9,5);
 \foreach \n in {1,2,3,4,5,6} \fill [blue] (\n)
   circle (2pt) node [below] {};  
		\end{tikzpicture}
			\caption{}
	\label{fullspaceparam}
 \end{subfigure} 
 \begin{subfigure}{.55\textwidth}
     \centering\begin{tikzpicture}[scale=0.6]
		\draw[->] (11.8,    3.1) node[anchor=south]{{ $   \alpha_i + \alpha_{j}$}} to[bend left] (11,2); 
		\draw[->] (10.2,    4.6) node[anchor=south]{{ $   \alpha_i + \beta_k$}} to[bend left] (7,2); 
		\draw[->] (4.3,    3) node[anchor=south]{{ $   \alpha_i + \alpha_j$}} to[bend left] (3,2); 
		\draw[->] (.6,    2.6) node[anchor=south]{{ $   \alpha_i + \alpha_{\circ}$}} to[bend right] (2,2);

		
		\fill [white] (0,-0.3) circle (2pt) node [below] {};
 \coordinate (1) at (0,0);
 \coordinate (2) at (1,1);
 \coordinate (3) at (2,2);
 \coordinate (4) at (3,3);
 \coordinate (5) at (4,4);
 \coordinate (6) at (5,5);
 \foreach \n in {1,2,3,4,5,6} \fill [green] (\n)
   circle (2pt) node [below] {};

   \coordinate (1) at (1,0); 
 \foreach \n in {1} \fill [red] (\n)
   circle (2pt) node [below] {};

    \coordinate (1) at (2,0);
 \coordinate (2) at (2,1); 
 \foreach \n in {1,2} \fill [red] (\n)
   circle (2pt) node [below] {};

    \coordinate (1) at (3,0);
 \coordinate (2) at (3,1);
 \coordinate (3) at (3,2); 
 \foreach \n in {1,2,3} \fill [red] (\n)
   circle (2pt) node [below] {};

    \coordinate (1) at (4,0);
 \coordinate (2) at (4,1);
 \coordinate (3) at (4,2);
 \coordinate (4) at (4,3); 
 \foreach \n in {1,2,3,4} \fill [red] (\n)
   circle (2pt) node [below] {}; 
   
    \coordinate (1) at (5,0);
 \coordinate (2) at (5,1);
 \coordinate (3) at (5,2);
 \coordinate (4) at (5,3);
 \coordinate (5) at (5,4); 
 \foreach \n in {1,2,3,4,5} \fill [red] (\n)
   circle (2pt) node [below] {};

    \coordinate (1) at (6,0);
 \coordinate (2) at (6,1);
 \coordinate (3) at (6,2);
 \coordinate (4) at (6,3);
 \coordinate (5) at (6,4); 
 \coordinate (6) at (6,5); 
 \foreach \n in {1,2,3,4,5,6} \fill [blue] (\n)
   circle (2pt) node [below] {}; 
    \coordinate (1) at (7,0);
 \coordinate (2) at (7,1);
 \coordinate (3) at (7,2);
 \coordinate (4) at (7,3);
 \coordinate (5) at (7,4);
 \coordinate (6) at (7,5);
 \foreach \n in {1,2,3,4,5,6} \fill [blue] (\n)
   circle (2pt) node [below] {}; 
   
    \coordinate (1) at (8,0);
 \coordinate (2) at (8,1);
 \coordinate (3) at (8,2);
 \coordinate (4) at (8,3);
 \coordinate (5) at (8,4);
 \coordinate (6) at (8,5);
 \foreach \n in {1,2,3,4,5,6} \fill [blue] (\n)
   circle (2pt) node [below] {};
   
    \coordinate (1) at (9,0);
 \coordinate (2) at (9,1);
 \coordinate (3) at (9,2);
 \coordinate (4) at (9,3);
 \coordinate (5) at (9,4);
 \coordinate (6) at (9,5);
 \foreach \n in {1,2,3,4,5,6} \fill [red] (\n)
   circle (2pt) node [below] {}; 
   
    \coordinate (1) at (10,0);
 \coordinate (2) at (10,1);
 \coordinate (3) at (10,2);
 \coordinate (4) at (10,3);
 \coordinate (5) at (10,4); 
 \foreach \n in {1,2,3,4,5} \fill [red] (\n)
   circle (2pt) node [below] {}; 
   
    \coordinate (1) at (11,0);
 \coordinate (2) at (11,1);
 \coordinate (3) at (11,2);
 \coordinate (4) at (11,3); 
 \foreach \n in {1,2,3,4} \fill [red] (\n)
   circle (2pt) node [below] {};

    \coordinate (1) at (12,0);
 \coordinate (2) at (12,1);
 \coordinate (3) at (12,2); 
 \foreach \n in {1,2,3} \fill [red] (\n)
   circle (2pt) node [below] {};

    \coordinate (1) at (13,0);
 \coordinate (2) at (13,1); 
 \foreach \n in {1,2} \fill [red] (\n)
   circle (2pt) node [below] {};

    \coordinate (1) at (14,0); 
 \foreach \n in {1} \fill [red] (\n)
   circle (2pt) node [below] {}; 
   
   \clip (0,0)--(14,0)--(9,5)--(5,5)--(0,0);
		\draw[dotted, gray] (0,0) grid (14.2,5.3);
		
		\end{tikzpicture}
		
	\caption{}
	\label{halfspaceparam}
	\end{subfigure}
	\caption{(A) The parametrization in (\ref{paramFull}). Rows and columns are indexed from $1$ to $n$ and $n+m+1$, respectively. (B) The parametrization in (\ref{paramHal}). Rows and columns are indexed from $1$ to $n$ and $2n+m$, respectively. In both figures, $n=6,m=3$, and $i=3, j=4, k=2$.}
 \end{figure} 
  
\begin{theorem}\label{main} 
Assume that parameters $n,m,\alpha_{\circ },\bsb \alpha, \bsb \beta$ and the partition functions $Z(n,n+m+1)$, $ Z^{\rflat}(n;m)$ are   as in Definition \ref{nota}. 
   Then the following equality in distribution holds,
   \begin{equation}
      Z^{\rflat}(n;m)\overset{(d)}{=\joinrel =} Z(n,n+m+1).
      \label{eq:identityindistribution}
   \end{equation}
\end{theorem}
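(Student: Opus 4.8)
\emph{Strategy.} Both $Z^{\rflat}(n;m)$ and $Z(n,n+m+1)$ are almost surely positive, so \eqref{eq:identityindistribution} will follow once we show that they have the same Laplace transform $u\mapsto\mathbb E\!\left[e^{-uZ}\right]$ for $u>0$; this transform is always finite (since $0<e^{-uZ}\leq 1$) and determines the law of a positive random variable. The plan is to compute each transform by running the geometric RSK (gRSK) correspondence on the relevant weight array, to express the result as an $n$-fold contour integral built from the Sklyanin density and products of Gamma functions in the parameters $\alpha_\circ,\bsb\alpha,\bsb\beta$, and finally to identify the two integrals using Whittaker-function integral identities together with the Plancherel theorem for $\ggl{n}$- and for orthogonal Whittaker functions. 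This extends the computations carried out for point-to-point partition functions in \cite{COSZ} and \cite{BisiZygouras}.

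\emph{The full-space side.} The polymer \eqref{paramFull} is an inhomogeneous log-gamma polymer whose $N=n+m+1$ column parameters are $\alpha_\circ,\alpha_1,\dots,\alpha_n,\beta_1,\dots,\beta_m$ and whose $n$ row parameters are $\alpha_1,\dots,\alpha_n$. Running gRSK on its weight matrix, the corner entry of the output equals $Z(n,n+m+1)$, and by \cite{COSZ} its law is a $\ggl{n}$-Whittaker measure; the Whittaker--Plancherel theorem (together with a Bump--Stade-type identity to fuse the contributions of the column parameters) then expresses $\mathbb E\!\left[e^{-uZ(n,n+m+1)}\right]$ as an explicit $n$-fold Mellin--Barnes integral $I_{\mathrm{full}}(u)$ over a spectral variable $\bsb\lambda$, whose integrand is a ratio of products of Gamma functions in $\alpha_\circ$, the $\alpha_i$, the $\beta_k$ and the $\lambda_\ell$, weighted by the Sklyanin measure and by $\prod_\ell u^{-\lambda_\ell}$.

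\emph{The half-space side.} Up to the strip of width $m$ carrying the $\beta$-parameters, the weight array \eqref{paramHal} is symmetric across the main diagonal: the weights of $(i,j)$ and of its mirror image $(i,2n+m-j+1)$ are equal in law, and the trapezoid is invariant under the corresponding reflection, the $\alpha_i+\alpha_j$ weights of the left triangle being matched by the $\alpha_i+\alpha_{2n+m-j+1}$ weights of the right one. We therefore run gRSK on the underlying input array --- which is symmetric along the diagonal apart from the $\beta$-strip --- extending the computation of \cite{BisiZygouras} (which treated the homogeneous point-to-point case) to this inhomogeneous and decorated situation. Since the symmetric part of the input forces the two output tableaux to essentially coincide (as in the classical fact $P=Q$ for symmetric RSK input), the law of the reduced output is an orthogonal Whittaker measure --- built from $\mathfrak{so}_{2n+1}$- or $\mathfrak{so}_{2n}$-Whittaker functions, the type being dictated by the diagonal parameter $\alpha_\circ$ --- with a density of Gamma factors in $\alpha_\circ,\bsb\alpha,\bsb\beta$. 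The point-to-line partition function $Z^{\rflat}(n;m)=\sum_{k=1}^{n}Z(k,2n-k+m+1)$ of \eqref{defZflat} is identified with an explicit observable of this gRSK output, and its Laplace transform is written as an $n$-fold integral $I_{\mathrm{half}}(u)$ of orthogonal-Whittaker type over a spectral variable, again with a Gamma-function integrand.

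\emph{Matching, conclusion, and the main difficulty.} It remains to prove the analytic identity $I_{\mathrm{full}}(u)=I_{\mathrm{half}}(u)$ for all $u>0$. The idea is to invoke the integral identities relating $\ggl{n}$- and orthogonal Whittaker functions (of the kind used in \cite{BisiZygouras}, ultimately built from branching rules and the Bump, Stade and Ishii--Stade identities) together with the Plancherel theorems, in order to perform the spatial integrations on both sides and thereby reduce $I_{\mathrm{full}}$ and $I_{\mathrm{half}}$ to one and the same contour integral in the spectral variables; comparing integrands and contours then gives $I_{\mathrm{full}}=I_{\mathrm{half}}$, and injectivity of the Laplace transform on probability measures on $(0,\infty)$ yields \eqref{eq:identityindistribution}. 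I expect the crux to be the half-space computation: carrying the gRSK analysis through for the trapezoidal, fully inhomogeneous array --- pinning down precisely which orthogonal Whittaker function occurs and how the $\beta$-strip enters the density --- and then establishing the exact Whittaker integral identity that collapses both $I_{\mathrm{full}}$ and $I_{\mathrm{half}}$ onto a common form, all while controlling convergence and the choice of admissible contours.
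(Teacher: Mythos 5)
Your overall strategy coincides with the paper's: symmetrize the trapezoidal array, run gRSK, express $\mathbb E[e^{-rZ^{\rflat}(n;m)}]$ through $\gso{2n+1}$-Whittaker functions (the $\beta$-strip entering through a separate decorating factor, the function $\mathcal T_{\alpha_\circ,\bsb\beta;r}$ whose Whittaker transform must itself be computed by a second gRSK argument), quote the \cite{COSZ} contour-integral formula for the full-space side, and match the two via the $\ggl n$-Whittaker Plancherel theorem and the known transform of orthogonal Whittaker functions.

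There is, however, a genuine gap that your proposal does not address: the Plancherel step simply fails when $m$ is small. To apply \eqref{plancherel} one must write the integrand of \eqref{eq:Laplacetrapezoidal} as a product of two functions in $L^{2}(\mathbb R_{>0}^{n},\prod dx_i/x_i)$, and the square-integrability of the relevant Whittaker transform hinges on the decay $\exp(-\pi(m-n+2)\sum_i|\lambda_i|)$ of the Gamma-factor integrand in the spectral variables; this is only available for $m\geq n-1$. In particular the analytic matching you describe cannot by itself reach the case $m=0$, i.e.\ the half-space (octant) polymer that is the headline instance of the theorem. The paper closes this gap with a separate probabilistic argument (Section \ref{sec4}): one appends $\ell\geq n-m-1$ auxiliary columns of weights $\operatorname{Gamma}^{-1}(\alpha_i+k)$ to both models, applies the identity already proved for the augmented models (which have $\beta$-length $m+\ell\geq n-1$), and then lets $k\to\infty$, showing that $k^{\ell}Z_k$ converges in $L^2$ to the original partition functions on each side because the auxiliary weights degenerate to $1/k$ and only paths crossing the auxiliary strip minimally survive the scaling. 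Some device of this kind (or a genuinely different justification of the exchange of integrals for small $m$) is indispensable; as written, your proof would establish the identity only for $m\geq n-1$. A minor further imprecision: the orthogonal Whittaker function that appears is always $\Psi^{\gso{2n+1}}_{\bsb\alpha}$, regardless of $\alpha_\circ$; the diagonal parameter does not switch the type but enters through $\mathcal T_{\alpha_\circ,\bsb\beta;r}$.
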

  
 \begin{remark} In the case $m=0$, we obtain an identity in law between the point-to-point partition function of a full-space log-gamma polymer and the point-to-line partition function of a the log-gamma polymer confined in an octant.
  \end{remark}  
  \begin{remark}
  There exist several other remarkable identities in distribution involving partition functions of the log-gamma polymer. 
  \begin{itemize} 
 \item  The so-called replica partition function \cite{bisi2020geometric} has the same distribution as the point-to-point partition function in an octant \cite{OSZ}. 
 \item Two point-to-point partition functions in an octant have the same distribution when parameters along the first row and the boundary are switched \cite{HalfSpaceMac}. 
 \item  Certain joint distributions of point-to-point partition functions satisfy a shift invariance property \cite{dauvergne2020hidden},  similar to identities discovered in  \cite{borodin2019shift} for other exactly solvable models. 
 \end{itemize}
  \end{remark}
In order to prove Theorem \ref{main}, we will show that the Laplace transforms of both sides of \eqref{eq:identityindistribution} are equal. As we will prove in the sequel, the Laplace transform of the trapezoidal point-to-line partition function is given, for $r>0$,  by
\begin{align}\nonumber
\mathbb{E}\left[e^{-rZ^{\rflat}(n;m)}\right]= \frac{r^{\sum_{i=1}^{n}\alpha_{i}}}{ \prod_{ i=1}^n \Gamma(\alpha_i+\alpha_\circ) \prod_{ i=1}^n\prod_{j=1}^{n} \Gamma\left(\alpha_{i}+\alpha_{j} \right)\prod_{ i=1}^n\prod_{k=1}^m\Gamma(\alpha_i+\beta_k)} & \\
\times \int_{\mathbb{R}_{>0}^{n}} \mathcal{T}_{\alpha_{\circ},\bsb{\beta};r}(\bsb{x}) &
\Psi_{\boldsymbol{\alpha}}^{\mathfrak{s} \mathfrak{o}_{2 n+1}}(\boldsymbol{x})  \prod_{i=1}^{n}  \frac{{d} x_{i}}{x_{i}},
    \label{eq:Laplacetrapezoidal}
\end{align}
where the orthogonal Whittaker functions  $\Psi^{\gso{2n+1}}_{\bsb \alpha}$  and the function  $\mathcal{T}_{\alpha_{\circ},\bsb{\beta};r}$ are defined in Section \ref{sec:whi}.
In the special case $m=0$ and $\alpha_{\circ}=0$, the formula \eqref{eq:Laplacetrapezoidal} reduces to \cite[Theorem 3.13]{BisiZygouras}.
In the general case, the proof is given in Section \ref{sec_proof} where, following the approach used in \cite{BisiZygouras},  we compute the joint distribution of the image of  $(W_{i,j})$ under the geometric RSK map. Then we insert this joint distribution into the definition of the Laplace transform of $Z^{\rflat}(n;m)$ to obatin \eqref{eq:Laplacetrapezoidal}.

\medskip 
Regarding the full-space point-to-point partition function, its  Laplace transform  was computed in \cite{COSZ, OSZ} as 
\begin{align}\nonumber
  \mathbb{E}\left[e^{-r Z(n,n+m+1)}\right]
  =\frac{1}{\prod_{i=1}^n\Gamma(\alpha_i+\alpha_\circ)\prod_{ i=1}^n\prod_{j=1}^n\Gamma(\alpha_i+\alpha_j)\prod_{ i=1}^n\prod_{k=1}^m\Gamma(\alpha_i+\beta_k)} &\\
  \times \int_{\mathbb R_{>0}^n} e^{-rx_1} \Psi^{\ggl n}_{\alpha_{\circ}\sqcup \bsb{\alpha}\sqcup \bsb{\beta};1}(\bsb x) & \Psi^{\ggl n}_{\bsb \alpha}(\bsb x)\prod_{i=1}^n \frac{dx_i}{x_i},
    \label{eq:Laplacepointtopoint}
\end{align}
where $\alpha_{\circ}\sqcup \bsb{\alpha}\sqcup \bsb{\beta}$ denotes the concatenation of the three arrays of parameters and the function  $\Psi^{\ggl n}_{\alpha_{\circ}\sqcup \bsb{\alpha}\sqcup \bsb{\beta};1} $, defined in \eqref{eq7.5},  is a generalization of $\ggl n$-Whittaker functions.
At first sight, it is far from obvious that \eqref{eq:Laplacetrapezoidal} and \eqref{eq:Laplacepointtopoint} are equal. It turns out that  the Plancherel theorem   associated with $\ggl{n}$-Whittaker functions (see Section \ref{sec:whi}) can be applied to  both  formulas (see Corollary  \ref{LapTransFla} and Proposition \ref{lapTrans}),
and one thus transforms \eqref{eq:Laplacetrapezoidal} and \eqref{eq:Laplacepointtopoint} into the following contour integral formula (see Corollary \ref{LapTransFla} for details)
\begin{align*}
     \mathbb{E}&\left[e^{-r Z^{\rflat}(n;m)}\right]=\mathbb{E}\left[e^{-r Z(n,n+m+1)}\right]
      =\\
  &\int_{(\mu+\mathrm{i} \mathbb{R})^{n}} r^{\sum_{i=1}^{n}\left(\alpha_{i}-\lambda_{i}\right)} \prod_{1 \leq i, j \leq n} \Gamma\left(\lambda_{i}-\alpha_{j} \right) 
  \prod_{i=1}^{n}\frac{\Gamma\left(\lambda_{i}+{\alpha}_{\circ} \right)}{\Gamma\left(\alpha_{i}+{\alpha}_{\circ}\right)} \prod_{ i=1}^n\prod_{j=1}^{n} \frac{\Gamma\left(\lambda_{i}+{\alpha}_{j} \right)}{\Gamma\left(\alpha_{i}+{\alpha}_{j}\right)}  \prod_{ i=1}^n\prod_{k=1}^m\frac{\Gamma(\lambda_i+\beta_k)}{\Gamma(\alpha_i+\beta_k)}
   s_{n}(\bsb{\lambda}) d \bsb{\lambda}.
\end{align*} 
However, the Plancherel theorem can only be applied  when $m$ is sufficiently large, otherwise it is not clear how to write the integrand in \eqref{eq:Laplacetrapezoidal} as a product of  two functions in $L^2(\mathbb{R}_{>0}^n)$, 
as required to apply the Plancherel theorem. In particular, the above argument does  not work for the point-to-line partition function of polymers confined in an octant, since in this case $m=0$. This is the reason why we have introduced in Definition \ref{nota} the trapezoidal domains with arbitrary $m$. We will show that for large enough but fixed  $m$, the integrand in \eqref{eq:Laplacetrapezoidal} is the product of two functions in $L^2(\mathbb{R}_{>0}^n)$. Then, once the identity in distribution
\eqref{eq:identityindistribution} is established for large enough $m$, a probabilistic argument allows to prove the result for arbitrary $m\geqslant 0$. This probabilistic argument is given in Section \ref{sec4}.

\begin{remark}
The equality between \eqref{eq:Laplacetrapezoidal} and  \eqref{eq:Laplacepointtopoint} (which holds for any $m\geqslant 0$) is a sign of -- seemingly unknown -- properties of Whittaker functions. For $m=0$, the identity reads
\begin{equation}
r^{\sum_{i=1}^{n}\alpha_{i}} \int_{\mathbb{R}_{>0}^{n}} e^{-r/x_n} \left(\prod_{i=1}^n(x_i/r)^{(-1)^{i}}\right)^{\alpha_{\circ}} 
\Psi_{\boldsymbol{\alpha}}^{\mathfrak{s} \mathfrak{o}_{2 n+1}}(\boldsymbol{x})  \prod_{i=1}^{n}  \tfrac{{d} x_{i}}{x_{i}} = \int_{\mathbb R_{>0}^n} e^{-rx_1} \Psi^{\ggl n}_{\alpha_{\circ}\sqcup \bsb{\alpha};1}(\bsb x) \Psi^{\ggl n}_{\bsb \alpha}(\bsb x)\prod_{i=1}^n \tfrac{dx_i}{x_i}.
\label{eq:identityWhittaker}
\end{equation}
An analogous identity holds in the zero temperature limit, where one considers geometric last passage percolation instead of the log-gamma polymer. This identity was obtained in \cite[(7.59), (7.60)]{BaikRains} 
(see also \cite[Section 4]{BisiZygouras} and \cite{bisi2016goe}). Integrals in \eqref{eq:Laplacetrapezoidal} and \eqref{eq:Laplacepointtopoint}  become  sums over integer partitions in \cite{BaikRains} and Whittaker functions become Schur functions. Another combinatorial proof of (a special case) of this identity at the Schur function level  was proposed  in \cite{bisi2019transition}, using generalizations of bounded Littlewood identities from \cite{stembridge1990nonintersecting, macdonald1995symmetric, okada1998applications, krattenthaler1998identities}. Bounded Littlewood identities also appear in \cite{betea2018combinatorics} in a very similar context. It would be interesting to investigate whether the properties of Schur functions used in \cite{betea2018combinatorics, bisi2019transition} may be generalized to  Macdonald polynomials and Whittaker functions (see \cite{rains2021bounded} for partial answers).
\end{remark}

 \subsection{Asymptotics}\label{intro:asymp} Many asymptotic results have been obtained for partition functions of full-space directed polymer models \cite{amir2011probability, sasamoto2010exact, calabrese2010free, dotsenko2010replica, MacdoProcesses, borodin2014free, borodin2015height, imamura2017free, AKQ14}, and in particular for the log-gamma polymer \cite{BCR13,KQ18,BCD}. Yet not many rigorous asymptotic results exist for models in half-space. Theorem \ref{main} bridges asymptotics in the full-space case to the half-space case (and more generally, the trapezoidal case). Before stating our asymptotic result, let us fix some notations. 
  \begin{definition}\label{notation2}
  Suppose   $\theta,\delta>0$ are    fixed constants. Suppose $n$ and $m\geq n$ are integers that tend to infinity in such a way that $\frac{n}{m}>\delta$ holds. We denote $\frac{m}{n}$ by $p$ in the sequel.  We assume that $n$ is the variable that tends to infinity and $m,p$ are merely  functions of $n$. 
  Let $\theta_0>0$ be a  positive real number which may depend on $n$. 
  
For each $n$, consider   the full space log-gamma polymer   defined via (see Figure \ref{img_paramGUE})
\begin{equation}\label{paramGUE}
    W_{i,j}^{-1}\sim\operatorname{Gamma}(\theta_0) \text{ for } j=1,\quad W_{i,j}^{-1}\sim\operatorname{Gamma}(\theta) \text{ for } j\geq 2,
\end{equation}
on the domain  $\{(i,j)\mid 1\leq i\leq n,\,1\leq j\leq m\}$. We denote its point-to-point partition function by $Z(n,m)$ (as in \eqref{defZ}). 
Consider also the  trapezoidal  log-gamma polymer defined via  (see Figure \ref{img_paramTrapGUE})
\begin{equation} \label{paramTrapGUE}
         W_{i,j}^{-1}\sim\operatorname{Gamma}(\theta_0 )\,\text{ if } \, 1\leq i=j\leq n,   \quad
           W_{i,j}^{-1}\sim\operatorname{Gamma}(\theta ) \,  \text{ otherwise}, 
 \end{equation}
 on the domain $\{(i,j)\mid 1\leq i\leq n,\,i\leq j\leq n+m-i\}$.  We denote the point-to-line partition function of this trapezoidal log-gamma polymer by $Z^{\rflat}(n;m-n-1)$  (as in \eqref{defZflat}). Note that $m$ in this definition corresponds to $n+m+1$ in Definitions \ref{nota} and \ref{def:ptl}. We make this  change of notation  for the simplicity of discussions about Theorem \ref{main2} and it only concerns Section  \ref{intro:asymp} and Section \ref{sec:asymptotics}. 
 \end{definition}

\begin{figure}
\centering
\begin{subfigure}{.45\textwidth}
\begin{center}
\begin{tikzpicture}[scale=0.49]

		\draw[->] (9.6,    2.2) node[anchor=north]{\small{ $ W_{i,j}\sim \operatorname{Gamma}^{-1}(  \theta)$}} to[bend right] (6.5,3.5); 
 
		\draw[->] (-1.5,    2.8) node[anchor=south]{\small{ $ W_{i,1}\sim \operatorname{Gamma}^{-1}(  \theta_0)$}} to[bend right] (0,2.3);

		\clip (-0.5,-0.5) -- (9.5,-0.5) -- (9.5,5.5) -- (-0.5, 5.5) -- (-0.5,-0.5);
		\draw[dotted, gray] (0,0) grid (10,5);
	
 \coordinate (1) at (0,0); 
 \coordinate (2) at (0,1);
 \coordinate (3) at (0,2);
 \coordinate (4) at (0,3);
 \coordinate (5) at (0,4);
 \coordinate (6) at (0,5);
 \foreach \n in {1,2,3,4,5,6} \fill [green] (\n)
   circle (2pt) node [below] {}; 
   \coordinate (1) at (1,0);
 \coordinate (2) at (1,1);
 \coordinate (3) at (1,2);
 \coordinate (4) at (1,3);
 \coordinate (5) at (1,4);
 \coordinate (6) at (1,5);
 \foreach \n in {1,2,3,4,5,6} \fill [red] (\n)
   circle (2pt) node [below] {}; 
    \coordinate (1) at (6,0);
 \coordinate (2) at (6,1);
 \coordinate (3) at (6,2);
 \coordinate (4) at (6,3);
 \coordinate (5) at (6,4);
 \coordinate (6) at (6,5);
 \foreach \n in {1,2,3,4,5,6} \fill [red] (\n)
   circle (2pt) node [below] {}; 

    \coordinate (1) at (2,0);
 \coordinate (2) at (2,1);
 \coordinate (3) at (2,2);
 \coordinate (4) at (2,3);
 \coordinate (5) at (2,4);
 \coordinate (6) at (2,5);
 \foreach \n in {1,2,3,4,5,6} \fill [red] (\n)
   circle (2pt) node [below] {};

    \coordinate (1) at (3,0);
 \coordinate (2) at (3,1);
 \coordinate (3) at (3,2);
 \coordinate (4) at (3,3);
 \coordinate (5) at (3,4);
 \coordinate (6) at (3,5);
 \foreach \n in {1,2,3,4,5,6} \fill [red] (\n)
   circle (2pt) node [below] {};

    \coordinate (1) at (4,0);
 \coordinate (2) at (4,1);
 \coordinate (3) at (4,2);
 \coordinate (4) at (4,3);
 \coordinate (5) at (4,4);
 \coordinate (6) at (4,5);
 \foreach \n in {1,2,3,4,5,6} \fill [red] (\n)
   circle (2pt) node [below] {}; 
   
    \coordinate (1) at (5,0);
 \coordinate (2) at (5,1);
 \coordinate (3) at (5,2);
 \coordinate (4) at (5,3);
 \coordinate (5) at (5,4);
 \coordinate (6) at (5,5);
 \foreach \n in {1,2,3,4,5,6} \fill [red] (\n)
   circle (2pt) node [below] {}; 
   
    \coordinate (1) at (7,0);
 \coordinate (2) at (7,1);
 \coordinate (3) at (7,2);
 \coordinate (4) at (7,3);
 \coordinate (5) at (7,4);
 \coordinate (6) at (7,5);
 \foreach \n in {1,2,3,4,5,6} \fill [red] (\n)
   circle (2pt) node [below] {}; 
   
    \coordinate (1) at (8,0);
 \coordinate (2) at (8,1);
 \coordinate (3) at (8,2);
 \coordinate (4) at (8,3);
 \coordinate (5) at (8,4);
 \coordinate (6) at (8,5);
 \foreach \n in {1,2,3,4,5,6} \fill [red] (\n)
   circle (2pt) node [below] {};
   
    \coordinate (1) at (9,0);
 \coordinate (2) at (9,1);
 \coordinate (3) at (9,2);
 \coordinate (4) at (9,3);
 \coordinate (5) at (9,4);
 \coordinate (6) at (9,5);
 \foreach \n in {1,2,3,4,5,6} \fill [red] (\n)
   circle (2pt) node [below] {};  
		\end{tikzpicture} 
\end{center}
		\caption{Parametrization in (\ref{paramGUE}).}
		\label{img_paramGUE}
\end{subfigure}
\begin{subfigure}{.54\textwidth}
\begin{center} 

    \begin{tikzpicture}[scale=0.49]
		 \draw[->] (10.5,    4) node[anchor=south]{\small{ $W_{i,j}\sim \operatorname{Gamma}^{-1}(\theta)$}} to[bend left] (9.5,2.5);  
		 
		\draw[->] (.6,    3.3) node[anchor=south]{\small{ $ W_{i,i}\sim \operatorname{Gamma}^{-1}( \theta_0)$}} to[bend right] (2.5,2.5);

	
 \coordinate (1) at (0,0);
 \coordinate (2) at (1,1);
 \coordinate (3) at (2,2);
 \coordinate (4) at (3,3);
 \coordinate (5) at (4,4);
 \coordinate (6) at (5,5);
 \foreach \n in {1,2,3,4,5,6} \fill [green] (\n)
   circle (2pt) node [below] {}; 
   
   \coordinate (1) at (1,0); 
 \foreach \n in {1} \fill [red] (\n)
   circle (2pt) node [below] {};

    \coordinate (1) at (2,0);
 \coordinate (2) at (2,1); 
 \foreach \n in {1,2} \fill [red] (\n)
   circle (2pt) node [below] {};

    \coordinate (1) at (3,0);
 \coordinate (2) at (3,1);
 \coordinate (3) at (3,2); 
 \foreach \n in {1,2,3} \fill [red] (\n)
   circle (2pt) node [below] {};

    \coordinate (1) at (4,0);
 \coordinate (2) at (4,1);
 \coordinate (3) at (4,2);
 \coordinate (4) at (4,3); 
 \foreach \n in {1,2,3,4} \fill [red] (\n)
   circle (2pt) node [below] {}; 
   
    \coordinate (1) at (5,0);
 \coordinate (2) at (5,1);
 \coordinate (3) at (5,2);
 \coordinate (4) at (5,3);
 \coordinate (5) at (5,4); 
 \foreach \n in {1,2,3,4,5} \fill [red] (\n)
   circle (2pt) node [below] {};

    \coordinate (1) at (6,0);
 \coordinate (2) at (6,1);
 \coordinate (3) at (6,2);
 \coordinate (4) at (6,3);
 \coordinate (5) at (6,4); 
 \coordinate (6) at (6,5); 
 \foreach \n in {1,2,3,4,5,6} \fill [red] (\n)
   circle (2pt) node [below] {}; 
    \coordinate (1) at (7,0);
 \coordinate (2) at (7,1);
 \coordinate (3) at (7,2);
 \coordinate (4) at (7,3);
 \coordinate (5) at (7,4);
 \coordinate (6) at (7,5);
 \foreach \n in {1,2,3,4,5,6} \fill [red] (\n)
   circle (2pt) node [below] {}; 
   
    \coordinate (1) at (8,0);
 \coordinate (2) at (8,1);
 \coordinate (3) at (8,2);
 \coordinate (4) at (8,3);
 \coordinate (5) at (8,4);
 \coordinate (6) at (8,5);
 \foreach \n in {1,2,3,4,5,6} \fill [red] (\n)
   circle (2pt) node [below] {};
   
    \coordinate (1) at (9,0);
 \coordinate (2) at (9,1);
 \coordinate (3) at (9,2);
 \coordinate (4) at (9,3);
 \coordinate (5) at (9,4);
 \coordinate (6) at (9,5);
 \foreach \n in {1,2,3,4,5,6} \fill [red] (\n)
   circle (2pt) node [below] {}; 
   
    \coordinate (1) at (10,0);
 \coordinate (2) at (10,1);
 \coordinate (3) at (10,2);
 \coordinate (4) at (10,3);
 \coordinate (5) at (10,4); 
 \foreach \n in {1,2,3,4,5} \fill [red] (\n)
   circle (2pt) node [below] {}; 
   
    \coordinate (1) at (11,0);
 \coordinate (2) at (11,1);
 \coordinate (3) at (11,2);
 \coordinate (4) at (11,3); 
 \foreach \n in {1,2,3,4} \fill [red] (\n)
   circle (2pt) node [below] {};

    \coordinate (1) at (12,0);
 \coordinate (2) at (12,1);
 \coordinate (3) at (12,2); 
 \foreach \n in {1,2,3} \fill [red] (\n)
   circle (2pt) node [below] {};

    \coordinate (1) at (13,0);
 \coordinate (2) at (13,1); 
 \foreach \n in {1,2} \fill [red] (\n)
   circle (2pt) node [below] {};

    \coordinate (1) at (14,0); \foreach \n in {1} \fill [red] (\n) circle (2pt) node [below] {}; 
   
   \clip (0,0)--(14,0)--(9,5)--(5,5)--(0,0);
		\draw[dotted, gray] (0,0) grid (14.2,5.3);
		
		\end{tikzpicture}
      \end{center}	
     \caption{Parametrization in (\ref{paramTrapGUE}).}
	\label{img_paramTrapGUE}
\end{subfigure}
\caption{Log-gamma polymers with homogeneous weights in the bulk and perturbations on the boundary.}
\label{fig}
\end{figure}

Now we need to introduce certain quantities appearing in the limit theorem below. Let $\psi$ denote the digamma function defined on $\mathbb R_{>0}$ by 
\[\psi(z)=\partial_z \log \Gamma(z) = -\gamma+\sum_{n=0}^{\infty}\left(\frac{1}{n+1}-\frac{1}{n+z}\right). \]
Here $\gamma$ is the Euler–Mascheroni constant.
Its derivative $\psi^{\prime}(z)=\sum_{n=0}^{\infty}\frac{1}{(n+z)^2}$ is a strictly decreasing function.
\begin{definition}\label{notation3}
Fix $\theta>0$.
Define  $\theta_c$ to be the unique solution to $\psi^{\prime}(\theta_c)-p\psi^{\prime } (\theta-\theta_c)=0$. It is well-defined since $x\mapsto\psi^{\prime}(x)-p\psi^{\prime}(\theta-x)$ is a bijection from $(0,\theta)$ to $\mathbb{R}$. 
Define
\begin{equation*} 
f_{\theta,p}:= -\psi(\theta_c)-p\psi(\theta-\theta_c),
\,\, \text{ and }
\bar f_{\theta,p}:= -\psi(\theta_0)-p\psi(\theta-\theta_0).
\end{equation*}  
Define also 
\begin{equation*} 
    \sigma_{\theta,p}:=\left(
       \frac{  - \psi^{\prime\prime} (\theta_c)-p\psi^{\prime \prime}(\theta-\theta_c) }{2}\right)^{\frac{1}{3}}. 
\end{equation*}
Since $p$ depends on $n$, the quantities  $\theta_c,f_{\theta,p},\bar f_{\theta,p}, \sigma_{\theta,p}$ all depend on $n$ as well.
\end{definition}

\begin{theorem}\label{main2}
Assume that parameters $n,m,p,\theta,\theta_0$ and the partition function $Z^{\rflat}(n;m-n-1)$  are as in   Definition \ref{notation2}. 
\begin{enumerate}
    \item  If   $\liminf_{n\rightarrow\infty} n^{1/3}(\theta_0-\theta_c)=+\infty $, then for all $t$,
    \begin{equation}
       \lim_{n\rightarrow\infty} \mathbb{P}\left( \frac{\log Z^{\rflat}(n;m-n-1)-nf_{\theta,p}  }{  n^{1 / 3} \sigma_{\theta,p}} \leq t\right)= F_{\mathrm{GUE}}(t),
       \label{eq:convergenceGUE}
    \end{equation} 
    with $F_{\mathrm{GUE}}$ being the GUE Tracy-Widom distribution \cite{TW94},  defined in \eqref{eq:defGUE}. 
    \item  If for some $y\in\mathbb{R}$, $\lim_{n\rightarrow\infty} n^{1/3}(\theta_0-\theta_c) \sigma_{\theta,p}=y   $, then for all $t$,
    \begin{equation*}
       \lim_{n\rightarrow\infty} \mathbb{P}\left( \frac{\log Z^{\rflat}(n;m-n-1)-nf_{\theta,p}}{  
       n^{1 / 3} \sigma_{\theta,p}}  \leq t\right)= F_{\mathrm{BBP} ; -y}(t),  
    \end{equation*} 
    with $F_{\mathrm{BBP} ; -y}$ being  the Baik-Ben Arous-P\'ech\'e  distribution \cite{BBP05}, defined in \eqref{eq:defBBP}.
    \item   If for some $\alpha\in(2/3,1]$, the limit $\lim_{n\rightarrow\infty}m^{-\alpha}\left({n\psi^{\prime}(\theta_0)-m\psi^{\prime}(\theta-\theta_0)}\right)$ exists and is positive,
    then for all $t$,
    \begin{equation}
       \lim_{n\rightarrow\infty} \mathbb{P}\left( \frac{\log Z^{\rflat}(n;m-n-1)- n\bar f_{\theta,p}}{ n^{1 / 2} \sqrt{\psi^{\prime} (\theta_0)-p\psi^{\prime }(\theta-\theta_0)}} \leq t\right)=\Phi(t),
       \label{eq:convergenceGaussian}
    \end{equation}
    where $\Phi$ is the standard Gaussian distribution function.
\end{enumerate}
\end{theorem}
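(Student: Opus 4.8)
The plan is to deduce everything from the identity in distribution of Theorem \ref{main} together with known one-point asymptotics for the point-to-point log-gamma partition function in a quadrant. First I would match the parametrizations: in Definition \ref{notation2} the trapezoidal weights have parameter $\theta_0$ on the diagonal and $\theta$ elsewhere, while the full-space polymer of \eqref{paramGUE} has parameter $\theta_0$ on its first column and $\theta$ elsewhere. Reading off Definition \ref{nota}, this corresponds to the homogeneous choice $\alpha_i = \theta/2$ for $1\le i\le n$, $\alpha_\circ = \theta_0 - \theta/2$, and $\beta_k = \theta/2$ for all $k$; with this choice the constraints $\alpha_i+\alpha_\circ=\theta_0>0$, $\alpha_i+\alpha_j=\theta>0$, $\alpha_i+\beta_k=\theta>0$ hold, and after the notational shift ($m$ in Definition \ref{notation2} equals $n+m+1$ of Definition \ref{nota}) Theorem \ref{main} gives
\[
\log Z^{\rflat}(n;m-n-1) \;\overset{(d)}{=}\; \log Z(n,m),
\]
where $Z(n,m)$ is the point-to-point partition function of the full-space polymer \eqref{paramGUE} with a boundary perturbation $\theta_0$ on the first column and bulk parameter $\theta$, living on an $n\times m$ rectangle with $m = pn$.

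With this reduction in hand, each of the three cases is a citation to the literature on the inhomogeneous log-gamma polymer with one perturbed row/column, combined with the observation that the centering and scaling constants in Definition \ref{notation3} are exactly the ones produced by the standard steepest-descent/Fredholm-determinant analysis. For case (1), when $n^{1/3}(\theta_0-\theta_c)\to+\infty$ the boundary perturbation is subcritical and does not affect the limit; the free energy $\log Z(n,m)$ then has GUE Tracy--Widom fluctuations with centering $nf_{\theta,p}$ and scale $n^{1/3}\sigma_{\theta,p}$ — this is precisely the content of \cite{BCR13} (together with \cite{BCD} to get the full range of aspect ratios $p>\delta$), the value $\theta_c$ being the location of the double critical point of the exponential action $\psi(\theta-s)+p\,\mathrm{stuff}$, determined by $\psi'(\theta_c)-p\psi'(\theta-\theta_c)=0$. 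For case (2), when $n^{1/3}(\theta_0-\theta_c)\sigma_{\theta,p}\to y$ the perturbation sits exactly at the critical scale, and the Fredholm determinant picks up a finite-rank deformation at the critical point, producing the Baik--Ben Arous--P\'ech\'e distribution $F_{\mathrm{BBP};-y}$; I would cite \cite{BBP05} for the form of the limiting law and \cite{BCR13,KQ18} for the polymer-level statement, checking that the shift of the parameter $\theta_0$ relative to $\theta_c$ by $\sigma_{\theta,p}^{-1}n^{-1/3}y$ enters the deformation with the stated sign. For case (3), when $\theta_0$ is sufficiently far below $\theta_c$ (supercritically perturbed, quantified by $n\psi'(\theta_0)-m\psi'(\theta-\theta_0)$ being of order $m^{\alpha}$ with $\alpha>2/3$ and positive), the partition function is dominated by paths hugging the perturbed column, so $\log Z(n,m)$ behaves like a sum of $n$ nearly-i.i.d.\ contributions; a central limit theorem then gives Gaussian fluctuations on scale $n^{1/2}$ with mean $n\bar f_{\theta,p}=-n\psi(\theta_0)-m\psi(\theta-\theta_0)$ and variance $n(\psi'(\theta_0)-p\psi'(\theta-\theta_0))$. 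The precise quantitative form here is what requires the condition on $\alpha$: one needs the boundary term to be large enough to dominate the KPZ-scale fluctuations ($n^{1/3}$) of the bulk but the statement is uniform because $p$ (hence all the $\theta$-dependent constants) is allowed to depend on $n$.

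The steps, in order: (i) verify the parameter dictionary and invoke Theorem \ref{main} to replace the half-space/trapezoidal object by the quadrant partition function $Z(n,m)$; (ii) recall the Laplace-transform / Fredholm-determinant formula for $Z(n,m)$ with one perturbed column and the steepest-descent setup, identifying the critical point $\theta_c$ and the scaling exponents, thereby matching the constants $f_{\theta,p}$, $\bar f_{\theta,p}$, $\sigma_{\theta,p}$; (iii) treat the subcritical regime by showing the rank-one deformation is asymptotically negligible, obtaining $F_{\mathrm{GUE}}$; (iv) treat the critical window by tracking the finite deformation through the kernel limit, obtaining $F_{\mathrm{BBP};-y}$; (v) treat the supercritical regime by a direct probabilistic/CLT argument on the last column. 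Steps (iii)--(iv) are essentially importing \cite{BCR13,KQ18,BBP05,BCD} after checking that our $n$-dependent parameters stay in the allowed range, so the main obstacle is step (v): one must make rigorous the heuristic that, deep in the pinned phase, the dominant contribution to $Z(n,m)$ factorizes along the perturbed column, and control the error terms uniformly as $p=p(n)$ varies — in particular justifying that the condition $m^{-\alpha}(n\psi'(\theta_0)-m\psi'(\theta-\theta_0))\to c>0$ with $\alpha>2/3$ is exactly what is needed to drown the $O(n^{1/3})$ bulk fluctuations inside the $O(n^{1/2})$ Gaussian ones while still having a genuine CLT rather than a degenerate limit.
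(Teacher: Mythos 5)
Your step (i) — transferring the problem to the quadrant partition function $Z(n,m)$ via Theorem \ref{main} — and your treatment of case (2) as a direct citation of \cite{BBP05,BCD} coincide with the paper. The gaps are in cases (1) and (3). For case (1), the claim that the GUE statement ``is precisely the content of \cite{BCR13} (together with \cite{BCD})'' is not accurate: the available results give Tracy--Widom asymptotics only for the homogeneous polymer $\theta_0=\theta$ and BBP asymptotics only in the critical window $n^{1/3}(\theta_0-\theta_c)\to y$, whereas the hypothesis $\liminf n^{1/3}(\theta_0-\theta_c)=+\infty$ allows $\theta_0=\theta_0(n)$ to approach $\theta_c$ at any slower rate (and $p=p(n)$ to vary), a regime not covered by those theorems. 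Your fallback — redoing the steepest-descent analysis of the Fredholm determinant to show the rank-one deformation is negligible — is a genuine piece of asymptotic analysis that your plan does not carry out. The paper avoids it with a soft sandwich exploiting monotonicity of $Z$ in the weights: couple the first-column weights with larger weights $\widetilde W_{i,1}\sim\operatorname{Gamma}^{-1}(\theta_c+Un^{-1/3})$ (by adding an independent gamma to the rate), so $Z(n,m)\le\widetilde Z(n,m)$ and $\widetilde Z$ sits exactly in the BBP window, giving the upper bound $F_{\mathrm{BBP};-U}$ for every fixed $U$, then let $U\to\infty$ using $F_{\mathrm{BBP};-U}\to F_{\mathrm{GUE}}$; the matching bound comes from $Z(n,m)\ge W_{1,1}Z(1,2|n,m)$, the homogeneous GUE result for $Z(1,2|n,m)$, and Slutsky. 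Some argument of this kind (or the unexecuted kernel analysis) is needed; as written, case (1) is unproved.

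For case (3) you state the correct centering and variance and the correct reason for the threshold $\alpha>2/3$ (the Gaussian scale is $\sqrt{n\psi'(\theta_0)-m\psi'(\theta-\theta_0)}\asymp m^{\alpha/2}$, which must beat the $m^{1/3}$ bulk fluctuations — note it is $m^{\alpha/2}$, not $n^{1/2}$, when $\theta_0\to\theta_c$), but you explicitly leave the rigorous argument open, and this is exactly where the content lies. The paper's proof is again a two-sided bound built from Sepp\"al\"ainen's estimates \cite{Seppa}: for the upper bound, $Z(n,m)\overset{(d)}{=}Z^{\mathrm{stat}}(2,1|n+1,m)\le Z^{\mathrm{stat}}(n+1,m)$, where $Z^{\mathrm{stat}}$ is the stationary log-gamma polymer with boundary parameters $\theta_0$ and $\theta-\theta_0$, for which \cite{Seppa} provides a CLT with precisely the centering $n\bar f_{\theta,p}$ and variance $n\psi'(\theta_0)-m\psi'(\theta-\theta_0)$ under your hypothesis; for the lower bound, restrict paths to pass through $(n-\lfloor ms\rfloor+1,1)$ with the optimizing choice $s=\psi'(\theta-\theta_0)/\psi'(\theta_0)$, so that $\log Z$ dominates an i.i.d.\ column sum (to which the classical CLT applies, since the hypothesis forces $n-ms\to\infty$) plus a homogeneous bulk free energy whose deviations from its law of large numbers are $O(m^{1/3})$ in $L^1$ by \cite[Theorem 2.4]{Seppa}, hence negligible on the $m^{\alpha/2}$ scale. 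Without identifying these inputs (or equivalent ones) and assembling the sandwich, your step (v) remains a heuristic rather than a proof.
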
 
\begin{remark}
In part (3) of Theorem \ref{main2}, the hypothesis  in the case $\alpha\in(2/3,1)$ is equivalent to that   the limit $\lim_{n\rightarrow\infty}\frac{n}{m^\alpha}(\theta_0-\theta_c)$ exists and is negative.
\end{remark}
\begin{remark}
If $m$ and $n$ go to infinity in such a way that $p=\frac{m}{n}$ is fixed, then $\theta_c$ is also a fixed constant. Theorem \ref{main2} implies in particular that when $\theta_0$ is fixed with $\theta_0>\theta_c$, then \eqref{eq:convergenceGUE} holds, and when $\theta_0$ is fixed with  $\theta_0<\theta_c$, then  \eqref{eq:convergenceGaussian} holds.
\end{remark}
We prove Theorem \ref{main2} in Section \ref{sec:asymptotics}. The part (2) of Theorem \ref{main2} is a direct consequence of \cite[Theorem~1.7]{BCD}, stated below in Proposition \ref{asymptoticsGUE}.
The part (1)   is deduced from \cite[Theorem~1.2, Theorem~1.7]{BCD} via a coupling method.
 The part (3) is obtained using estimates from \cite{Seppa} about stationary and homogeneous log-gamma polymer models.
 
 \medskip 
 
  An explanation for why the point-to-point partition function undergoes a phase transition under a perturbation of one column is provided in \cite{BBP05} (in the zero temperature limit). Recall that an inverse-gamma variable has large mean value when its parameter is small. For small enough values of  $\theta_0$, paths stay a macroscopic distance along the first column with large probability under the polymer measure, while for larger values of $\theta_0$, paths would not be influenced by the different weights along the first column. We refer to \cite[Section 8.1]{HalfSpaceMac} and \cite[Section 1.5]{BCD} for a more detailed exposition of this argument in the context of the log-gamma polymer. In the half-space setting, a precise qualitative understanding of the measure on polymer paths, for various values of  $\theta_0$, is more difficult to obtain. Nevertheless,  we expect that for a fixed $\theta_0<\theta_c$, most paths under the polymer measure will hit the boundary a $\mathcal O(n)$ number of times, while when $\theta_0>\theta_c$, the geometry of paths should be qualitatively similar as in the full-space case.

 \subsection{KPZ equation}
 The Kardar-Parisi-Zhang (KPZ) equation \cite{KPZ} is the stochastic PDE 
 \begin{equation}
 \partial_t h(t,x) = \tfrac 1 2 \partial_{xx}h(t,x)+\tfrac 1 2 \left(\partial_x h(t,x)\right)^2 +\xi(t,x),\;\; x\in \mathbb R,\, t>0,
     \label{eq:KPZ}
 \end{equation}
 where $\xi(t,x)$ is a space-time Gaussian white noise. By definition, we say that $h$ is a  solution (in the Cole-Hopf sense) of \eqref{eq:KPZ} if $h(t,x)=\log \mathcal Z(t,x)$ where $\mathcal Z$ is a solution of the multiplicative noise stochastic heat equation (mSHE) 
 \begin{equation}
     \partial_t \mathcal Z(t,x)= \tfrac{1}{2} \partial_{x,x} \mathcal Z(t,x) +\mathcal Z(t,x)\xi(t,x),\;\; x\in \mathbb R, \, t>0, 
     \label{eq:SHE}
 \end{equation}
 which can be given a precise meaning using Ito calculus, see for example the review  \cite{corwin2012kardar}. We also consider the KPZ equation on the half-line $\mathbb R_{\geq 0}$. The solution depends on a boundary parameter which we will denote by $A\in \mathbb R$.  As is the case for the full-line KPZ equation, a solution of the KPZ equation on a half-line is defined as the logarithm of the mSHE on $\mathbb R_{\geq 0}$ with Robin type boundary condition
 \begin{equation}
     \begin{cases}
      \partial_t \mathcal Z_A(t,x)= \tfrac{1}{2} \partial_{x,x} \mathcal Z_A(t,x) +\mathcal Z_A(t,x)\xi(t,x),\;\; x\in \mathbb R_{\geq 0}, \, t>0, \\
      \partial_x \mathcal Z_A(t,x) \big\vert_{x=0} =A \mathcal Z_A(t,0). 
      \end{cases}
      \label{eq:halfspaceSHE}
 \end{equation}
Since the solution $\mathcal Z_A(t,x)$ is not differentiable in $x$,  the boundary condition $\partial_x \mathcal Z_A(t,x) \big\vert_{x=0} =A \mathcal Z_A(t,0)$ cannot be enforced on the solution itself. It must be enforced on the half-line heat kernel used to define mild solutions, we refer to \cite[Def. 2.5]{corwin2018open} for details. 

\medskip 
It is predicted in the physics paper \cite{BarraquandLeDoussal} that for all 
$t>0$, $X\geq 0$ and $A\in \mathbb R$, the identity in distribution 
\begin{equation}
    \mathcal Z_A(t,0)  = 2 \mathcal Z(t,-X)
    \label{eq:identityKPZ}
\end{equation}
holds, where,  in the LHS,  $\mathcal Z_A$ is the solution to \eqref{eq:halfspaceSHE} with initial data $\mathcal Z_A(0,x) = \mathbf{1}_{x\geqslant X}$, and in the RHS, $\mathcal Z$ is the solution to \eqref{eq:SHE} with initial data $\mathcal Z(0,x) = \mathbf{1}_{x\geq 0}\,e^{B_x-(A+\frac 1 2 )x}$, where $B_x$ is a standard Brownian motion independent of the white noise $\xi$. This prediction was based on the fact that moments of both sides of \eqref{eq:identityKPZ} match. However, this observation does not constitute a proof because moments of a solution  to the mSHE do not uniquely determine its distribution. 

\medskip 

We expect that \eqref{eq:identityKPZ} can be proved as a scaling limit of our main result \eqref{eq:identityindistribution}. Indeed, consider the log-gamma partition functions $Z(n,n+m+1)$ and  $Z^{\rflat}(n;m)$ with parameters chosen as in \eqref{paramGUE} and \eqref{paramTrapGUE} respectively. Let us use the scalings  \begin{equation}
    \theta=2\sqrt{N},\; \theta_0=\sqrt{N}+A+\tfrac 1 2,\; n=tN,\; m=X\sqrt{N}.
    \label{eq:scalings}
\end{equation}
Then we expect to find a sequence $C_N$ such that under the scalings \eqref{eq:scalings}, for all $t>0, X\geqslant 0$ and $A\in \mathbb R$, we have the weak convergences as $N\to+\infty$ 
\begin{align}
   C_N Z(n,n+m+1) &\Longrightarrow 2 \mathcal Z(t,-X),\label{eq:convergenceKPZfull}\\
    C_N Z^{\rflat}(n;m) &\Longrightarrow  \mathcal Z_A(t,0),\label{eq:convergenceKPZhalf}
\end{align}
where the initial conditions are the same as those required for  \eqref{eq:identityKPZ}.
The convergence of full-space directed polymer partition functions to the mSHE with delta initial data was first proved in \cite{AKQ14}. The proof was extended to cover the particular setting of the log-gamma polymer in \cite{corwin2017intermediate}. It seems however that the convergence stated in \eqref{eq:convergenceKPZfull} is not proved in the existing  literature, although the same initial condition was considered in \cite{parekh2019positive} in a slightly different setting. Regarding half-space directed  polymers, a general convergence result of the partition function of discrete polymer models to solutions of \eqref{eq:halfspaceSHE} was proved in \cite{Wu}. The result of \cite{Wu} applies to the log-gamma polymer, but only deals with point-to-point partition functions. It seems that significant extra work is needed to prove the convergence of the point-to-line partition function.   

\medskip 

Thus, modulo the proof of convergences \eqref{eq:convergenceKPZfull} and \eqref{eq:convergenceKPZhalf}, our Theorem \ref{main} implies \eqref{eq:identityKPZ}. Proofs of these convergences are outside the scope of the present paper, but we hope that proving \eqref{eq:identityKPZ} may be a motivation to extend the existing proofs of convergence for full-space and half-space polymers to a more general setting, covering in particular \eqref{eq:convergenceKPZfull} and \eqref{eq:convergenceKPZhalf}.



 \subsection{Outline}
  In Section \ref{sec:rsk}, we state and prove a number of prerequisites about the geometric RSK correspondence and Whittaker functions. In Section \ref{sec_proof}, we prove two formulas for the Laplace transform of the point-to-line partition function of trapezoidal log-gamma polymers. In Section \ref{sec4}, we prove the main identity in distribution (Theorem \ref{main}).  In Section \ref{sec:asymptotics}, we prove our asymptotic result (Theorem \ref{main2}).

 \subsection*{Acknowledgments}
G.B. thanks Pierre Le Doussal and Nikos Zygouras for useful discussions, and Xuan Wu for explanations related to \cite{Wu}. We also thank an anonymous referee for his/her constructive and detailed comments.

\section{Geometric RSK correspondence and Whittaker functions } \label{sec:rsk}
This section provides preliminary results useful in the following sections. In particular we introduce the geometric RSK correspondence (we will denote it by $\operatorname{gRSK}$) on polygonal arrays (in particular, symmetric polygonal arrays) and Whittaker functions. We refer the reader to Elia Bisi's PhD thesis \cite{bisi2018random} for a more detailed introduction to these subjects. We will also introduce the function $\mathcal T_{\alpha_{\circ}, \bsb\beta; r}$ which appears in \eqref{eq:Laplacetrapezoidal} and state some of its properties. 

\medskip 
It was found in \cite{OSZ,COSZ}  that   Whittaker functions  arise naturally as we consider gRSK on (rectangular) domains (see also \cite{o2012directed} for an earlier occurrence of Whittaker functions in a directed polymer context). The partition functions for log-gamma polymers on such domains may be written as integrals involving  Whittaker functions.
For log-gamma polymers on non-rectangular domains, a generalization of gRSK  to polygonal arrays \cite{NZ17} may be used 
to derive, for example, the Laplace transform of the partition function, which is the approach adopted by  \cite{BisiZygouras} to treat half-space log-gamma polymers.

\subsection{From half-space to symmetrized  polymer partition functions} \label{sec:halftosymmetric}

    Before introducing gRSK, we point out that gRSK does not seem to apply well to trapezoidal domains, that we are interested in. However, a trapezoidal log-gamma polymer can always be converted to an equivalent    symmetric log-gamma polymer, in that they have identical partition functions. And it is convenient to apply gRSK to symmetric domains \cite{OSZ,BisiZygouras}. 
    
    Some notations   come in handy. For  any domain  $\mathcal{I}\subset\mathbb{N}\times\mathbb{N}$, we define its \emph{transpose} as $\mathcal{I}^t:=\{(i,j)\mid (j,i)\in\mathcal{I}\}$. 
    A domain $\mathcal{I}$ is symmetric if $\mathcal{I}=\mathcal{I}^t$.
    An array  $\bsb{w}$ defined on  a symmetric domain $\mathcal{I}$ is called \textit{symmetric} if ${w}_{i,j}={w}_{j,i}$ for all $i,j$. We may now state     the aforementioned equivalence between trapezoidal log-gamma polymers and symmetric log-gamma polymers.

Recall from (\ref{defZflat})  that  the  point-to-line partition function on a trapezoidal log-gamma polymer  is given by 
\begin{equation*}
Z^{ \rflat}(n;m):=\sum_{  \substack{1\leq k\leq n \\ \pi :(1,1)\to(k,2n-k+m+1)}  } \prod_{(i, j) \in \pi} W_{i,j},
\end{equation*}
where the sum is over up-right paths confined in a trapezoidal domain $\mathcal{I}=\{(i,j)\mid 1\leq i\leq n, i\leq j\leq 2n+m-i+1\}$ (cf. Figure \ref{halfspaceparam}).
Now,  let  $\widetilde{W}_{i,j}$'s be   random variables  given by  
\begin{equation*} 
\widetilde{W}_{i, j} :\overset{}{=}\left\{\begin{array}{ll}
{W}_{i, j}/2 ,&   i=j  , \\
{W}_{i, j} ,&  i<j  , \\
{W}_{j, i} ,&   i >j .
\end{array}\right.
\end{equation*}

We see from this definition that $ (\widetilde{W}_{i, j})$ is defined on a  symmetric domain (that is, $ \widetilde{W}_{i, j}$ exists if and only if $ \widetilde{W}_{j,i}$ exists) and   $ (\widetilde{W}_{i, j})_{i\leq j}$ is a family of independent random variables. We call this the \emph{symmetrized} polymer
and define  the \textit{symmetrized partition function} as
\[
Z^{\symflat}(n;m):=\sum_{\substack{1\leq k\leq n \\ \pi:(1,1)\to(k,2n-k+m+1)}} \prod_{(i,j)\in\pi}\widetilde{W}_{i,j}+\sum_{\substack{1\leq k\leq n \\ \pi:(1,1)\to(2n-k+m+1,k)}} \prod_{(i,j)\in\pi}\widetilde{W}_{i,j},
\]
where  the sums are over  up-right paths $\pi$ in the $\symflat$--shape domain $\mathcal{I}\cup \mathcal{I}^t$. Since $(\widetilde{W}_{i,j})$ is symmetric,  a path $\pi$ in $\mathcal{I}\cup\mathcal{I}^t$   can always be converted, by performing   reflections with respect to the diagonal $\{i=j\}$,   to a path $\pi^{\prime}$ in $\mathcal{I}$ with the same weight, i.e.,  $\prod_{(i,j)\in\pi}\widetilde{W}_{i,j}=\prod_{(i,j)\in\pi^{\prime}}\widetilde{W}_{i,j}$. Conversely, for a fixed $\pi^{\prime}$, the number of such $\pi$'s is   $2^{|\{i\mid (i,i)\in \pi^{\prime}\}|}$. Hence
\begin{align}
   \nonumber  Z^{\symflat}(n;m) 
    &= \sum_{  \substack{1\leq k\leq n \\ \pi^{\prime} :(1,1)\to(k,2n-k+m+1)}  } 
    \prod_{(i, j) \in \pi^{\prime}} 2^{|\{i\mid (i,i)\in \pi^{\prime}\}|}\widetilde{W}_{i, j} \\
    &\overset{}{=}  \sum_{  \substack{1\leq k\leq n \\ \pi^{\prime} :(1,1)\to(k,2n-k+m+1)}  } \prod_{(i, j) \in \pi^{\prime}}  {W}_{i, j} = Z^{\rflat}(n;m). \label{eq3.1}
\end{align}
This equivalence allows us to  study $Z^{\symflat}(n;m)$ in lieu of studying $Z^{\rflat}(n;m)$, which will be the starting point of our proof for Theorem \ref{main}.

   \subsection{Geometric RSK correspondence}
   The geometric RSK correspondence was introduced by Kirillov \cite{Kri01} as the geometric lifting of the RSK correspondence. Noumi and Yamada proposed an alternative definition, purely written in terms of matrix algebra \cite{NY04}. It is shown in \cite{OSZ} that gRSK can be constructed using local moves and that this is equivalent to \cite{NY04}.  In the following we present this construction via local moves.
   
  \subsubsection{Geometric RSK correspondence on polygonal arrays}
We start with some definitions. 
 \begin{definition}
  A \emph{polygonal domain} $\mathcal{I}$ is a finite subset of $ \mathbb{N}\times\mathbb{N}$   such  that    we have $(i,j)\in\mathcal{I}$   whenever $(i+1,j)\in\mathcal{I}$ or  $(i,j+1)\in\mathcal{I}$.
A \emph{polygonal array} on $\mathcal{I}$ is   an array $(w_{p,q})_{ (p,q)\in \mathcal{I}}\in\mathbb{R}_{>0}^{|\mathcal{I}|}$ with positive entries   indexed by a polygonal domain $\mathcal{I}$. 
For notational convenience, we   freely write $\bsb{w}$, $\bsb{w}_{\mathcal{I}}$, $(w_{p,q})$, or $(w_{p,q})_{\mathcal{I}}$  to refer to the same polygonal array when there is no ambiguity.

An index  $(i,j)\in\mathcal{I}$  is called a  \emph{border index} if $(i+1,j+1)$ does not belong to $\mathcal{I}$. 
If none  of $(i+1,j)$, $(i,j+1)$, $(i+1,j+1)$   belongs to $\mathcal{I}$, then $(i,j)$ is called an \emph{outer index}.
\end{definition}

The gRSK map  is a birational map between polygonal arrays    on a same polygonal domain.   The following presentation of gRSK, following \cite{NZ17, OSZ}, is based on local moves.
Local moves are    birational maps between polygonal arrays that are localized on an entry and its neighbors. We will use two families  of local moves $(a_{i,j})$ and $(b_{i,j})$.
\begin{definition}
Let $(w_{i,j})$ be a polygonal array on $\mathcal{I}$. For an index $(i,j)\in\mathcal{I}$, the local move $a_{i,j}$ replaces $w_{i,j}$ with $$w_{i, j}\left(w_{i-1, j}+w_{i, j-1}\right),$$ with the convention $w_{i,0}=w_{0,j}=0$ and $w_{0,1}+w_{1,0}=1$. 
For a non-border index $(i,j)\in\mathcal{I}$, the local move $b_{i,j}$ replaces $w_{i,j}$ with $$ {w_{i, j}^{-1}}\left(w_{i-1, j}+w_{i, j-1}\right)\left( {w_{i+1, j}}^{-1}+ {w_{i, j+1}}^{-1}\right)^{-1}.$$
For $i\leq j$  ($i\geq j$ resp.), let
\[\varrho_{i, j}:=  a_{i, j} \circ  b_{i-1, j-1} \circ \dots \circ b_{1,j-i+1} \quad \left(\varrho_{i, j}:=  a_{i, j} \circ  b_{i-1, j-1} \circ \dots \circ b_{i-j+1,1} \text{ resp.}\right).\]
\end{definition}
Now we are ready to give the definition of  gRSK, which is done by induction.
Let $\bsb{w}$ be a polygonal array   on $\mathcal{I}$,   let $\mathcal{I}_{\text{out}}$ be the set of outer indices of $\mathcal{I}$ and let $\mathcal{I}^\circ : =\mathcal{I}\setminus \mathcal{I}_{\text{out}}$. The induction starts from $\operatorname{gRSK}(\varnothing):=\varnothing$. 
By introducing the notations $\boldsymbol{w}^{\circ}=\left\{w_{i, j}:(i, j) \in \mathcal{I}^{\circ}\right\}, \boldsymbol{w}^{\text {out }}=\left\{w_{i, j}:(i, j) \in \mathcal{I}_{\text {out}}\right\}$, the induction relation reads
\begin{equation}\label{eq19}
   \operatorname{gRSK}(\boldsymbol{w}):=\bigcirc_{(i, j) \in \mathcal{I}_{\text {out }}} \varrho_{i, j}\left(\operatorname{gRSK}\left(\boldsymbol{w}^{\circ}\right) \sqcup \boldsymbol{w}^{\text {out }}\right),
\end{equation}
where $\bigcirc$ denotes compositions of $\varrho_{i, j}$'s and $ \sqcup $ denotes union of two arrays.  Note that the order in which the $\varrho_{i, j}$'s for $(i, j) \in \mathcal{I}_{\text {out }}$ are composed is irrelevant (see \cite{NZ17}).

\begin{proposition}[{\cite[Proposition 2.6, 2.7]{NZ17}}]\label{gRSKprop} 

Let $\bsb{t} = \operatorname{gRSK}(\bsb{w})$. 
\begin{itemize}
\item 
The Jacobian of the gRSK  in log-coordinates 
\begin{equation*}
    \left(\log w_{i, j}:(i, j) \in \mathcal{I} \right) \mapsto\left(\log t_{i, j}:(i, j) \in \mathcal{I} \right)
\end{equation*}
has absolute value 1.
    \item With the convention   $t_{0,j}=t_{i,0}=0$ for all $i,j$, we have
\begin{equation*} 
    \sum_{(i,j)\in\mathcal{I}}\frac{1}{w_{i,j}}  = \frac{1}{t_{1,1}}+\sum_{(i, j) \in \mathcal{I}} \frac{t_{i-1, j}+t_{i, j-1}}{t_{i, j}}.
\end{equation*} 

\item  
For any border index $(n,m)$,
\[ t_{n,m}=\sum_{\pi:(1,1)\rightarrow (n,m)}\prod_{(i,j)\in\pi} w_{i,j},
\]
where the sum is over up-right paths  from $(1,1)$ to $(n,m)$.
\item  
For any border index $(n,m)$,
\[\prod_{i=1}^n\prod_{j=1}^mw_{i,j}=\tau_{m-n},
\]
where  $\tau_{q}: = \prod_{j-i=q}t_{i,j}$.
In particular, for an outer index $(n,m)$, 
\[  \prod_{i=1}^n  w_{i,m}=\frac{\tau_{m-n}}{\tau_{m-n-1}} \quad \text{and} \quad
\prod_{j=1}^m  w_{n,j}=\frac{\tau_{m-n}}{\tau_{m-n+1}}
\]
hold.
\end{itemize}
\end{proposition}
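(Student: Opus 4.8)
The plan is to derive all four statements from the corresponding properties of the two elementary local moves $a_{i,j}$ and $b_{i,j}$, and then to propagate them through the inductive definition \eqref{eq19} by induction on $|\mathcal{I}|$, peeling off the set $\mathcal{I}_{\mathrm{out}}$ of outer indices at each step. Throughout I would work in logarithmic coordinates $x_{p,q}=\log w_{p,q}$ and use that $\operatorname{gRSK}$ is a bijection of $\mathbb{R}_{>0}^{|\mathcal{I}|}$ onto itself — a fact proved by writing down the inverse local moves, which also makes ``the Jacobian'' meaningful. For the Jacobian claim itself: each of $a_{i,j}$ and $b_{i,j}$ changes exactly one coordinate, sending $w_{i,j}\mapsto w_{i,j}^{\pm1}\cdot g$ where $g$ is a rational function of coordinates the move leaves untouched; in log-coordinates this is a map whose Jacobian matrix is the identity except in the row indexed by $(i,j)$, whose diagonal entry there is $+1$ for $a_{i,j}$ and $-1$ for $b_{i,j}$, hence of determinant $\pm1$; a disjoint union $\sqcup$ contributes an identity block. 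Since $\operatorname{gRSK}$ is a composition of such elementary maps, its log-Jacobian is a product of matrices of determinant $\pm1$, so it has absolute value $1$.

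The path identity (third bullet) is the geometric lift of the classical fact that an entry of the RSK insertion tableau records a longest increasing subsequence, and I would prove it together with the whole output array by induction on $\mathcal{I}$. Given \eqref{eq19}, one processes the outer indices by the $\varrho_{i,j}$'s; when the move $\varrho_{n,m}=a_{n,m}\circ b_{n-1,m-1}\circ\cdots$ reaches an outermost corner $(n,m)$, the entries at $(n-1,m)$ and $(n,m-1)$ have, by the inductive hypothesis applied to $\boldsymbol{w}^{\circ}$, already become the path partition functions to those points, and the intervening $b$-moves, being localized strictly inside the processed region, do not disturb them (here the order-independence remark after \eqref{eq19} is convenient); then $a_{n,m}$ produces $w_{n,m}\bigl(t_{n-1,m}+t_{n,m-1}\bigr)$, which is exactly the recursion satisfied by $\sum_{\pi:(1,1)\to(n,m)}\prod_{(i,j)\in\pi}w_{i,j}$, the base cases being handled by the conventions $w_{i,0}=w_{0,j}=0$, $w_{0,1}+w_{1,0}=1$. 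The energy identity (second bullet) is proved in the same spirit: writing $\mathcal{E}(\boldsymbol{w})=\sum_{(i,j)\in\mathcal{I}}w_{i,j}^{-1}$ and $\mathcal{E}^{\vee}(\boldsymbol{t})=t_{1,1}^{-1}+\sum_{(i,j)\in\mathcal{I}}(t_{i-1,j}+t_{i,j-1})/t_{i,j}$, one checks by a telescoping computation along the anti-diagonal on which the $b$-moves of a single $\varrho_{i,j}$ act that $\mathcal{E}$ is transported to $\mathcal{E}^{\vee}$ one elementary move at a time.

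For the $\tau$-identities (fourth bullet), the key observation is that, for each fixed diagonal $\{j-i=d\}$, the product of the current entries of the array over the intersection of that diagonal with the already-processed region is invariant under each elementary move used to build that region; comparing the initial configuration (processed entries equal to the $w_{i,j}$) with the final one (processed entries equal to the $t_{i,j}$) then yields $\prod_{i\le n,\,j\le m}w_{i,j}=\prod_{j-i=m-n}t_{i,j}=\tau_{m-n}$ for a border index $(n,m)$ after a short combinatorial collapse of the product over diagonals, and the two displayed corollaries for an outer index $(n,m)$ follow by dividing this equation for $(n,m)$ by the same equation for $(n-1,m)$, respectively for $(n,m-1)$. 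I expect the genuine work to be concentrated in the energy identity of the second bullet: verifying that the chain $a_{i,j}\circ b_{i-1,j-1}\circ\cdots\circ b_{1,j-i+1}$ transforms the localized energy contribution in precisely the claimed way is the one step that does not reduce to a one-line check and where the boundary conventions must be tracked with care.
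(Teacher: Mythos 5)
First, a point of reference: the paper does not prove this proposition at all — it is quoted verbatim from \cite{NZ17} — so there is no internal proof to compare against; you are in effect re-deriving results of Noumi--Yamada/Nguyen--Zygouras. Your treatment of the first three bullets follows the same local-move strategy as that reference and is sound: each $a_{i,j}$, $b_{i,j}$ alters one coordinate with log-Jacobian $\pm 1$; the path identity at border indices follows from the recursion produced by $a_{n,m}$ once one checks (as you indicate) that the intervening $b$-moves and the other $\varrho$'s do not touch the entries at $(n-1,m)$ and $(n,m-1)$; and your ``one elementary move at a time'' plan for the energy identity does work, because a $b$-move at $(i,j)$ sends $x\mapsto AB^{-1}/x$ with $A=t_{i-1,j}+t_{i,j-1}$, $B=(t_{i+1,j}^{-1}+t_{i,j+1}^{-1})^{-1}$ and therefore merely swaps the values of the two hybrid-energy terms $A/x$ and $x(t_{i+1,j}^{-1}+t_{i,j+1}^{-1})$, while $a_{n,m}$ converts $1/w_{n,m}$ into $(t_{n-1,m}+t_{n,m-1})/t_{n,m}$ without changing its value.

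The genuine gap is in your fourth bullet. The ``key observation'' — that for each diagonal $\{j-i=d\}$ the product of the current entries over that diagonal (intersected with the processed region) is invariant under the elementary moves — is false. Already for a $2\times 2$ array, $\varrho_{2,2}=a_{2,2}\circ b_{1,1}$ applied to $\operatorname{gRSK}(\bsb{w}^{\circ})\sqcup\{w_{2,2}\}$ changes the product over the diagonal $d=0$ from $w_{1,1}w_{2,2}$ to $t_{1,1}t_{2,2}=w_{1,1}w_{1,2}w_{2,1}w_{2,2}$: the $b$-move replaces the entry at $(1,1)$ by $w_{1,2}w_{2,1}/(w_{1,2}+w_{2,1})$ and the $a$-move multiplies the entry at $(2,2)$ by $w_{1,1}(w_{1,2}+w_{2,1})$, so the diagonal product is not preserved by either move. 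Moreover, no diagonal-by-diagonal conservation law could ever yield the statement, since the identity to be proved, $\prod_{i\leq n}\prod_{j\leq m}w_{i,j}=\tau_{m-n}$, equates a product of \emph{inputs over a full rectangle} with a product of \emph{outputs over a single diagonal}; an invariance of the kind you posit would only give $\tau_{m-n}=\prod_{j-i=m-n}w_{i,j}$, which is wrong, and the ``combinatorial collapse over diagonals'' you invoke has nothing to act on. A correct local-move argument must instead compute how $\varrho_{n,m}$ transforms the diagonal product (each $b$-move sends its entry $x$ to $A B /x$ with $A,B$ built from off-diagonal neighbours, so the old diagonal entries get inverted rather than preserved) and combine this with the inductive identities at the border indices $(n-1,m)$ and $(n,m-1)$ of $\mathcal{I}^{\circ}$ — or simply reduce to the known rectangular case. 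By contrast, the last part of your bullet four is fine: once the main identity is known at all border indices, dividing the identity at $(n,m)$ by those at $(n-1,m)$ and $(n,m-1)$ (which are border indices whenever $(n,m)$ is outer) does give the two displayed row/column-product formulas.
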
 
Note that $t_{n,m}$, $(n,m)$ being a border index, can thus be regarded as a point-to-point partition function for a polymer with weight array $\bsb{w}$.

\subsubsection{Symmetric geometric RSK correspondence}
As motivated by Subsection \ref{sec:halftosymmetric}, we introduce gRSK for symmetric arrays, first considered in \cite{OSZ}. Suppose  $\mathcal{I}\subset \{(i,j)\mid i\leq j\}$  and  that $\mathcal{I}\cup \mathcal{I}^t$ is a polygonal domain. 
Then a symmetric  array  $\bsb{w}$   on $\mathcal{I}\cup \ci^t$ only has $|\ci |$ free variables (say, the restriction of $\bsb{w}$ to the quadrant $\{(i,j)\mid i\leq j\}$). Let $\bsb{t}:=\operatorname{gRSK}(\bsb{w})$, then $\bsb{t}$ also is symmetric  (see Proposition \ref{jacob} below) and thus has $|\ci |$ free variables. We may thus regard gRSK in this symmetric setting as a birational map from $\mathbb{R}_{>0}^{|\ci|}$ to  $\mathbb{R}_{>0}^{|\ci|}$.

\begin{proposition}[\cite{OSZ,BisiZygouras}]\label{jacob}
If $\bsb{w}$ is symmetric, then  $\bsb{t}=\operatorname{gRSK}(\bsb{w})$ is also symmetric. In addition,
the Jacobian of   
\begin{equation*} 
    \left(\log w_{i, j}:  i \leq j\right) \mapsto\left(\log t_{i, j}: i \leq j\right)
\end{equation*}
has absolute value 1.  
\end{proposition}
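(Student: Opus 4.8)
\textbf{Proof proposal for Proposition \ref{jacob}.}

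The plan is to reduce the symmetric statement to the non-symmetric statement in Proposition \ref{gRSKprop} by tracking the local moves carefully. First I would unwind the inductive definition \eqref{eq19} of $\operatorname{gRSK}$ on the polygonal domain $\mathcal{I}\cup\mathcal{I}^t$ and observe that, because $\mathcal{I}\cup\mathcal{I}^t$ is symmetric, its set of outer indices $(\mathcal{I}\cup\mathcal{I}^t)_{\mathrm{out}}$ is invariant under transposition, and likewise the ``peeled'' domain $(\mathcal{I}\cup\mathcal{I}^t)^\circ$ is again symmetric. This lets the induction proceed entirely within the class of symmetric polygonal domains, so it suffices to show that one layer of the recursion preserves symmetry: that is, if $\bsb{u}$ is a symmetric array (the output of $\operatorname{gRSK}$ on the inner domain, glued with the symmetric outer layer) then $\bigcirc_{(i,j)}\varrho_{i,j}(\bsb{u})$ is symmetric. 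The base case $\operatorname{gRSK}(\varnothing)=\varnothing$ is trivially symmetric.

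The heart of the argument is then a symmetry property of the composite local moves $\varrho_{i,j}$. Concretely, I would check that the elementary local moves $a_{i,j}$ and $b_{i,j}$ commute with transposition in the sense that applying $a_{i,j}$ (resp. $b_{i,j}$) to an array and then transposing gives the same result as transposing first and then applying $a_{j,i}$ (resp. $b_{j,i}$); this is immediate from the defining formulas, since both $w_{i,j}(w_{i-1,j}+w_{i,j-1})$ and $w_{i,j}^{-1}(w_{i-1,j}+w_{i,j-1})(w_{i+1,j}^{-1}+w_{i,j+1}^{-1})^{-1}$ are manifestly symmetric under swapping the two coordinate directions. Consequently $\varrho_{i,j}$ transposes to $\varrho_{j,i}$. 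Since for a symmetric outer layer the indices come in transposed pairs $\{(i,j),(j,i)\}$ (with the diagonal indices $(i,i)$ fixed), and since the order of composition of the $\varrho$'s is irrelevant by \cite{NZ17}, composing $\varrho_{i,j}\circ\varrho_{j,i}$ (or just $\varrho_{i,i}$) maps a symmetric array to a symmetric array. Feeding this through the induction gives that $\bsb{t}=\operatorname{gRSK}(\bsb{w})$ is symmetric; this also recovers the known facts of \cite{OSZ,NZ17}.

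For the Jacobian claim, once symmetry is established the map in question is the restriction of the full gRSK map on $\mathcal{I}\cup\mathcal{I}^t$ to the symmetric slice. In log-coordinates, write $\log w_{i,j}$ for $i\le j$ as the free variables and $\log w_{i,j}=\log w_{j,i}$ for $i>j$. The full log-coordinate gRSK map has Jacobian of absolute value $1$ by Proposition \ref{gRSKprop}, and it is equivariant under the linear involution that swaps the $(i,j)$ and $(j,i)$ coordinates. A linear map commuting with an involution preserves the decomposition into symmetric and antisymmetric subspaces; restricting to the symmetric subspace (the graph $\log w_{i,j}=\log w_{j,i}$, which is exactly the set of symmetric arrays mapped to symmetric arrays) yields a map whose Jacobian determinant times that on the antisymmetric complement equals the full determinant $\pm1$. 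The remaining point is to show the antisymmetric block contributes $\pm1$ as well, or to argue directly that the symmetric block has unit Jacobian; I would do this by the same peeling induction, checking that each $\varrho_{i,i}$ and each pair $\varrho_{i,j}\circ\varrho_{j,i}$ restricted to the symmetric variables has unit Jacobian, using that $b_{i,j}$ is volume-preserving in log-coordinates and $a_{i,j}$ shifts one log-coordinate by a function of the others. \emph{The main obstacle} I anticipate is precisely this bookkeeping of the Jacobian on the symmetric slice: one must be careful that the diagonal moves $a_{i,i}$, which involve the halving convention $\widetilde W_{i,i}=W_{i,i}/2$ in the application, do not introduce a spurious constant factor --- but since the $1/2$ is an additive constant in log-coordinates it drops out of the Jacobian, so a clean way to organize this is to verify the unit-Jacobian property move-by-move rather than by the subspace-decomposition shortcut.
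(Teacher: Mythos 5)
The paper itself contains no proof of Proposition \ref{jacob}; it is imported from \cite{OSZ,NZ17,BisiZygouras}, and your sketch follows essentially the same route as those references: the local moves intertwine with transposition ($a_{i,j},b_{i,j}$ become $a_{j,i},b_{j,i}$), so the composites $\varrho_{i,i}$ and $\varrho_{i,j}\circ\varrho_{j,i}$ preserve symmetric arrays through the peeling induction, and on the symmetric slice each move changes a single free log-coordinate by $\pm$ itself plus a function of the other free coordinates, giving Jacobian of absolute value $1$ move by move. One caution: the subspace-decomposition shortcut is not sufficient on its own, since equivariance of the differential only shows that the product of the symmetric-block and antisymmetric-block determinants is $\pm 1$; you already flag this and correctly fall back on the move-by-move verification, which is the right (and standard) argument.
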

  
  The following proposition provides a formula  for the product of diagonal entries $\prod_{i}w_{i,i}$ in terms of $t_{j,j}$'s. This was first proved in {\cite[Lemma 5.1]{OSZ}} for cases where the underlying polygonal domain is  $\{(i,j)\mid 1\leq i,j \leq n\}$ for some $n$. We provide a proof here  for general symmetric polygonal arrays\footnote{As pointed out by an anonymous referee, Proposition \ref{prop2} can also be deduced from  \cite[Lemma 5.1]{OSZ} using the fact that the diagonal entries of the gRSK image of a polygonal array $\bsb w $ coincide exactly with the diagonal entries of the gRSK image of the square subarray $\bsb w^{sq} = (\bsb w_{ij})_{1\leq i,j\leq \ell}$. In fact, this implies that Proposition \ref{prop2} is still true under the weaker assumption that $\bsb w^{sq}$ is symmetric.}.
  
  We introduce the \emph{diagonal length} $\ell$ of a polygonal array $\bsb{w}$ on $\ci$ as $\ell:=\operatorname{card}\{i\mid (i,i)\in \mathcal{I}\}$.
\begin{proposition}\label{prop2} Suppose       $\bsb{w}$ is a  symmetric polygonal array. Let $\bsb{t}=\operatorname{gRSK}(\bsb{w})$, then
   \[
   4^{\lfloor n / 2\rfloor} \prod_{i=1}^{n} w_{i, i} =\prod_{j=1}^{n}t_{j,j}^{(-1)^{n-j}},
   \]
   where $n$ is the {diagonal length} of $\bsb{w}$.
\end{proposition}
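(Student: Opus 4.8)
The strategy is induction on the diagonal length $n$, exploiting the recursive structure of $\operatorname{gRSK}$ via local moves. The base cases $n=0$ (empty statement, $4^0=1$) and $n=1$ (a single diagonal entry: here one must check directly that $\operatorname{gRSK}$ acting near an isolated diagonal entry replaces $w_{1,1}$ by $t_{1,1}$ satisfying $4^0 w_{1,1}=t_{1,1}$, which follows from the convention $w_{0,1}+w_{1,0}=1$ in the definition of $a_{1,1}$) should be dealt with first. For the inductive step, I would peel off the outermost diagonal cell $(n,n)$ (and, by symmetry, the cells obtained by reflecting the outer indices) using the induction relation \eqref{eq19}: write $\mathcal{I} = \mathcal{I}^\circ \sqcup \mathcal{I}_{\mathrm{out}}$, apply $\operatorname{gRSK}$ to $\bsb{w}^\circ$ (whose diagonal length is $n-1$, or possibly $n-2$ if $(n,n)$ and $(n-1,n-1)$ both become outer), invoke the induction hypothesis for $\bsb{w}^\circ$, and then track what the composition $\bigcirc \varrho_{i,j}$ of local moves does to the diagonal entries.

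\textbf{Key steps.} First, set up notation so that $\tau_q := \prod_{j-i=q} t_{i,j}$ as in Proposition \ref{gRSKprop}, and record the symmetry $t_{i,j}=t_{j,i}$ (Proposition \ref{jacob}), which gives $\tau_q=\tau_{-q}$ and, crucially, relates the diagonal entries $t_{j,j}$ (the $q=0$ diagonal). Second, I would use the last bullet of Proposition \ref{gRSKprop}: for a border index $(n,m)$ one has $\prod_{i,j} w_{i,j}=\tau_{m-n}$, and for an outer index the ratios $\tau_{m-n}/\tau_{m-n\pm 1}$ compute products of $w$'s along the last row/column. Specializing this machinery to the \emph{symmetric} situation, the product over a symmetric domain factors as (product over the strictly-upper part)$^2 \times$ (product of diagonal entries), so controlling $\prod_i w_{i,i}$ amounts to controlling the ``defect'' between $\prod_{\mathrm{sym}} w_{i,j}$ and a perfect square of $t$-quantities. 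I expect the clean way to organize this is to prove, simultaneously with the stated identity, an auxiliary formula expressing $\prod_{(i,j)\in\mathcal{I}, i\le j} w_{i,j}$ (or the product over the whole symmetric domain) as an explicit monomial in the $t_{i,j}$, with the diagonal-entry identity emerging as the $q=0$ piece after one divides out the contribution of the off-diagonal $t$'s paired by symmetry. The powers of $4$ will come from the factor $w_{i,i}/2 = \widetilde{W}_{i,i}$-type rescalings implicit in how $a_{i,i}$ and $b_{i-1,i-1}$ act at diagonal cells: each time a $\varrho$-chain crosses the diagonal it contributes a factor of $2$, and $\lfloor n/2\rfloor$ counts how many such crossings survive in the telescoping.

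\textbf{Main obstacle.} The hard part will be step three: honestly tracking the effect of the chain $\varrho_{n,n} = a_{n,n}\circ b_{n-1,n-1}\circ\cdots\circ b_{1,1}$ on the diagonal entries when it is applied to the already-computed symmetric array $\operatorname{gRSK}(\bsb{w}^\circ)\sqcup \bsb{w}^{\mathrm{out}}$. The $b$-moves are the delicate ones because $b_{i-1,i-1}$ at a diagonal cell $(i-1,i-1)$ sees its neighbors $(i-2,i-1)$ and $(i-1,i-2)$, which are equal by symmetry, so the move $w \mapsto w^{-1}(w_{i-2,i-1}+w_{i-1,i-2})(w_{i,i-1}^{-1}+w_{i-1,i}^{-1})^{-1}$ simplifies to $w^{-1}\cdot 2 w_{i-2,i-1}\cdot (2 w_{i,i-1}^{-1})^{-1} = w^{-1} w_{i-2,i-1} w_{i,i-1}/w_{i-2,i-1}$... and the precise bookkeeping of which factors of $2$ cancel against which, across the whole composition, is exactly where the $4^{\lfloor n/2\rfloor}$ is born. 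One natural route to avoid doing this by brute force is to reduce to the rectangular case already handled in \cite[Lemma 5.1]{OSZ}: embed the symmetric polygonal array into a large symmetric square array by filling the missing cells with weights that are sent to $0$ or $1$ appropriately (or argue by a limiting/continuity argument using the explicit birational formulas), check that this embedding does not change the diagonal $t$-entries nor the left-hand side, and invoke the known result. If that reduction is clean it sidesteps the local-move computation entirely; if not, the inductive local-move argument above is the fallback, and its combinatorial core — showing the powers of $2$ telescope to $4^{\lfloor n/2 \rfloor}$ — is where essentially all the work lies.
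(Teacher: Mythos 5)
Your core plan --- induction on the diagonal length through the recursion \eqref{eq19}, peeling off the outer cells and following how the chain $\varrho_{n,n}$ acts on the diagonal of the symmetric array $\operatorname{gRSK}(\bsb{w}^{\circ})\sqcup\bsb{w}^{\text{out}}$ --- is exactly the paper's strategy. The two preliminary issues you raise resolve easily: if $(n,n)$ is not an outer index, the chains $\varrho_{i,j}$ attached to the (then necessarily off-diagonal) outer cells never modify diagonal entries, so both sides of the identity are unchanged when the outer entries are deleted, and one keeps peeling until $(n,n)$ is outer; and when $(n,n)$ is outer, $\bsb{w}^{\circ}$ has diagonal length exactly $n-1$ --- it cannot drop by two, since $(n-1,n-1)$ cannot be outer while $(n,n)\in\mathcal{I}$.

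However, the step you defer as the ``main obstacle'' is the entire content of the paper's proof, and the mechanism you guess for the constant --- one factor of $2$ per diagonal crossing, with $\lfloor n/2\rfloor$ crossings surviving --- is not what happens. Apply $\varrho_{n,n}=a_{n,n}\circ b_{n-1,n-1}\circ\cdots\circ b_{1,1}$ first (the order of the outer chains is immaterial); since these moves modify only diagonal cells, each sees symmetric, as-yet-unmodified off-diagonal neighbours, and symmetry of $\bsb{t}^{\circ}=\operatorname{gRSK}(\bsb{w}^{\circ})$ gives $t_{1,1}=t^{\circ}_{1,2}/(2t^{\circ}_{1,1})$ (from the convention $w_{0,1}+w_{1,0}=1$), $t_{j,j}=t^{\circ}_{j-1,j}\,t^{\circ}_{j,j+1}/t^{\circ}_{j,j}$ for $2\le j\le n-1$ (here the $2$'s cancel, as in the computation you began), and $t_{n,n}=2\,w_{n,n}\,t^{\circ}_{n-1,n}$. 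In the alternating product $\prod_{j}t_{j,j}^{(-1)^{n-j}}$ the off-diagonal $t^{\circ}$'s telescope between consecutive $j$'s, leaving $2^{1+(-1)^{n}}w_{n,n}\prod_{j=1}^{n-1}(t^{\circ}_{j,j})^{(-1)^{n-1-j}}$: the powers of $2$ come only from the two ends of the chain ($b_{1,1}$ and $a_{n,n}$), and the induction accumulates them through $2^{1+(-1)^{n}}\,4^{\lfloor (n-1)/2\rfloor}=4^{\lfloor n/2\rfloor}$. I would also drop your two side ideas: the $\tau_q$ identities of Proposition \ref{gRSKprop} only control whole row/column products and cannot isolate the alternating diagonal product, and the embedding into a symmetric square so as to quote \cite[Lemma 5.1]{OSZ} would require a degeneration argument at least as long as the telescoping computation above. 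With that computation supplied, your induction becomes precisely the paper's proof.
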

\begin{proof} 
We prove this by induction on $n$.
Observe that  when $n=1$, we have $\bsb{t}=\bsb{w}$ and the formula follows.

Suppose that the formula holds for any symmetric  polygonal array of diagonal length $n-1$, we now show that it also holds for any symmetric polygonal array $\bsb{w}$ of diagonal length $n$.
 
By (\ref{eq19}), if $(n,n)$ is not an outer index, then $t^{\circ}_{j,j}=t_{j,j}$ for all $j=1,\dots,n$, where $t^{\circ}$ denotes $\operatorname{gRSK}(\bsb{w}^{\circ})$. Since $w^{\circ}_{i,i}=w_{i,i}$ for all $1\leq i\leq n$, the problem is thus reduced to proving the desired formula for $\bsb{w}^{\circ}$, which is $\bsb{w}$ deprived of the entries on its outer indices. After repeating this process for a finite time, the reduction stops at some ${{{{\bsb{w}^{\circ}}^{.}}^{.}}^{.}}^{\circ}$, of which $(n,n)$ is an outer index (see Figure \ref{img_rdc}).

Assume now without loss of generality that    $(n,n)$ is an outer index of $\bsb{w}$. Then the  diagonal length of $\bsb{w}^\circ$ is $n-1$ and the induction hypothesis yields 
\begin{equation*}
    4^{\lfloor \tfrac{n-1}{2}\rfloor} \prod_{i=1}^{n-1} w_{i i} =\prod_{j=1}^{n-1}{t^{\circ}_{j,j}}^{(-1)^{n-1-j}}.
\end{equation*}
By the hypothesis that $\bsb{w}$ is symmetric,    $\bsb{t}^{\circ}=\operatorname{gRSK}(\bsb{w}^{\circ})$ is symmetric.  Hence we may rewrite (\ref{eq19}) as
\begin{align*}
    t_{1,1}&=\frac{1}{t^{\circ}_{1,1}}\left(\frac{1}{t^{\circ}_{1,2}} +\frac{1}{t^{\circ}_{2,1}}\right)^{-1} =   \frac{t^{\circ}_{1,2}}{2t^{\circ}_{1,1}}, \\
    t_{j,j}&=\frac{1}{t^{\circ}_{j,j}} t^{\circ}_{j-1,j}t^{\circ}_{j,j+1},\quad \text{for}  \quad 2\leq j\leq n-1,\\
    t_{n,n}&= 2w_{n,n}t_{n-1,n}^{\circ}.
\end{align*}
Therefore,
\begin{align*}
    \prod_{j=1}^n(t_{j,j})^{(-1)^{n-j}}&=  t_{n,n}{}t_{n-1,n-1}^{-1}{}t_{n-2,n-2} \cdots t_{1,1}^{(-1)^{n-1}} \\ 
    &= 2^{1+(-1)^n}w_{n,n}\prod_{j=1}^{n-1}(t_{j,j})^{(-1)^{n-1-j}}\\
     &= 2^{1+(-1)^n}w_{n,n} 4^{\lfloor \tfrac{n-1}{2} \rfloor}\prod_{i=1}^{n-1}w_{i,i}\\
     &=4^{\lfloor \tfrac{n}{2} \rfloor}\prod_{i=1}^{n}w_{i,i},
\end{align*}
which completes the proof.

\end{proof}  
  
 \begin{figure}
     \centering\begin{tikzpicture}[scale=0.7] 
		\draw[->] (3,1.5) node[anchor=south]{} to[bend left] (4.5,1.5);  
		\draw[->] (8,1.5) node[anchor=south]{} to[bend left] (9.5,1.5);  
		  
 \coordinate (1) at (0,0);
 \coordinate (2) at (1,0);
 \coordinate (3) at (2,0);
 \coordinate (4) at (3,0);
 \coordinate (5) at (0,1);
 \coordinate (6) at (1,1);
 \coordinate (7) at (2,1);
 \coordinate (8) at (3,1);
 \coordinate (9) at (0,2);
 \coordinate (10) at (1,2);
 \coordinate (11) at (0,3);
 \coordinate (12) at (1,3);
 \foreach \n in {1,2,3,4,5,6,7,8,9,10,11,12} \fill [black] (\n)
   circle (2pt) node [below] {};

 \coordinate (1) at (5,0);
 \coordinate (2) at (6,0);
 \coordinate (3) at (7,0);
 \coordinate (4) at (8,0);
 \coordinate (5) at (5,1);
 \coordinate (6) at (6,1);
 \coordinate (7) at (7,1);
 \coordinate (9) at (5,2);
 \coordinate (10) at (6,2);
 \coordinate (11) at (5,3);
 \foreach \n in {1,2,3,4,5,6,7,9,10,11} \fill [black] (\n)
   circle (2pt) node [below] {};  
   
 \coordinate (1) at (10,0);
 \coordinate (2) at (11,0);
 \coordinate (3) at (12,0);
 \coordinate (5) at (10,1);
 \coordinate (6) at (11,1);
 \coordinate (9) at (10,2);
 \foreach \n in {1,2,3,5,6,9} \fill [black] (\n)
   circle (2pt) node [below] {};

		\end{tikzpicture}
		
	\caption{Reduction from $\bsb{w}$ to $\bsb{w}^{\circ }$, and then to $\bsb{w}^{\circ \circ}$.}
	\label{img_rdc}
 \end{figure}

  \subsection{Whittaker functions} \label{sec:whi}
  We use  $\mathrm{i}=\sqrt{-1}$ to denote the imaginary unit and reserve $i$ for use as indices.
 \subsubsection{$\mathfrak{gl}_n$-Whittaker functions} 
 We define $\ggl{n}$-Whittaker functions  through Givental's   integral formula   \cite{Giv97}.
  Fix a parameter sequence $\bsb{\alpha}=(\alpha_1,\dots, \alpha_n)\in \mathbb
  C^n$. Consider a  triangular array $\bsb{z}=(z_{i,j}: 1\leq i\leq n, 1\leq j\leq i)$ (Figure \ref{pat}). 
We   define the \emph{type} of such an array as 
\[
\operatorname{type}(\boldsymbol{z})_{i}:=\frac{\prod_{j=1}^{i} z_{i, j}}{\prod_{j=1}^{i-1} z_{i-1, j}} \quad \text { for } \quad i=1, \ldots, n,
\]
with the convention $\prod_{j=1}^0z_{0,j}=1$.
Denote 
$$ \operatorname{type}(\boldsymbol{z})^{\boldsymbol{\alpha}}:= \prod_{i=1}^{n} \operatorname{type}(\bsb{z})_i^{{\alpha}_i}.$$
  The $\mathfrak{gl}_n$-Whittaker function is defined via an integral on   triangular arrays with fixed bottom row:
\begin{equation}\label{eq7}
\Psi_{\boldsymbol{\alpha}}^{\mathfrak{g l}_{n}}(\boldsymbol{x}):=
\int_{\mathbb{R}_{>0}^{n(n-1)/2}}\operatorname{type}(\boldsymbol{z})^{-\boldsymbol{\alpha}}
\exp \left(-\sum_{i=1}^{n} \sum_{j=1}^{i}\frac{z_{i+1, j+1}+z_{i-1,j}}{z_{i, j}} \right) \prod_{\substack{1 \leq i<n \\ 1 \leq j \leq i}} \frac{dz_{i, j}}{z_{i, j}},
\end{equation}
where $z_{n,j}:=x_j$ for $1\leq j\leq n$ and by convention $z_{i,j}=0$ if $(i,j)$ is out of the range $1\leq i\leq n,1\leq j\leq i$. 
 A direct consequence of this definition is that for any $c\in\mathbb{C}$,
\begin{equation}\label{eq:whitransprop}
    \Psi_{\boldsymbol{\alpha}+c}^{\mathfrak{g l}_{n}}(\boldsymbol{x})=\left(\prod_{i=1}^{n} x_{i}\right)^{-c} \Psi_{\boldsymbol{\alpha}}^{\mathfrak{g l}_{n}}(\boldsymbol{x}).
\end{equation}

\begin{figure}
\centering
\begin{subfigure}{.85\textwidth}
\begin{center}
\begin{tikzpicture}[scale=0.9]
\node (z66) at (-2,-2) {$z_{6,6}$};
\node (z65) at (0,-2) {$z_{6,5}$};
\node (z64) at (2,-2) {$z_{6,4}$};
\node (z63) at (4,-2) {$z_{6,3}$};
\node (z62) at (6,-2) {$z_{6,2}$};
\node (z61) at (8,-2) {$z_{6,1}$};
\node (z55) at (-1, -1) {$z_{5,5}$};
\node (z54) at (1,-1) {$z_{5,4}$};
\node (z53) at (3,-1) {$z_{5,3}$};
\node (z52) at (5,-1) {$z_{5,2}$};
\node (z51) at (7,-1) {$z_{5,1}$};
\node (z44) at (0, 0) {$z_{4,4}$};
\node (z43) at (2, 0) {$z_{4,3}$};
\node (z42) at (4, 0) {$z_{4,2}$};
\node (z41) at (6, 0) {$z_{4,1}$};
\node (z33) at (1,1) {$z_{3,3}$};
\node (z32) at (3,1) {$z_{3,2}$};
\node (z31) at (5,1) {$z_{3,1}$};
\node (z22) at (2,2) {$z_{2,2}$};
\node (z21) at (4,2) {$z_{2,1}$};
\node (z11) at (3,3) {$z_{1,1}$}; 
\draw[-> ]  (z44) edge (z33)  (z33) edge (z22) edge (z43) (z43) edge  (z32) (z22) edge (z11) edge (z32) (z32) edge (z21) edge (z42) (z42) edge (z31) (z11)  edge (z21)  (z21) edge (z31)(z31) edge (z41);
\foreach \n in {1,2,3,4}
{
\pgfmathtruncatemacro{\y}{\n +1}
\draw[->] (z4\n) edge(z5\n) ; 
\draw[->] (z5\y) edge (z4\n);
}
\foreach \n in {1,2,3,4,5}
{
\pgfmathtruncatemacro{\y}{\n +1}
\draw[->] (z5\n) edge(z6\n) ; 
\draw[->] (z6\y) edge (z5\n);
}
\end{tikzpicture}
\end{center}
\caption{$\mathfrak{gl}_n$ case, $n=6$. }
\label{pat}
\end{subfigure}

\begin{subfigure}{.4\textwidth}
\begin{center} 
\begin{tikzpicture}[scale=0.9]
\node (z63) at (-2,-2) {$z_{6,6}$}; 
\node (z62) at (0,-2) {$z_{6,5}$};
\node (z61) at (2,-2) {$z_{6,4}$}; 
\node (z53) at (-1, -1) {$z_{5,5}$}; 
\node (z52) at (1, -1) {$z_{5,4}$};
\node (z51) at (3, -1) {$z_{5,3}$}; 
\node (z42) at (0, 0) {$z_{4,4}$};
\node (z41) at (2, 0) {$z_{4,3}$}; 
\node (z32) at (1,1) {$z_{3,3}$};
\node (z31) at (3,1) {$z_{3,2}$}; 
\node (z21) at (2,2) {$z_{2,2}$}; 
\node (z11) at (3,3) {$z_{1,1}$}; 

\node (e1) at (4,2) {\small $1$};
\node (e2) at (4,0) {\small $1$};
\node (e3) at (4,-2) {\small $1$};
\draw[-> ] (z63) edge(z53) (z53) edge(z42) edge(z62) (z62) edge(z52) (z52) edge(z41) edge (z61) (z61) edge (z51)  (z42) edge(z52) edge (z32)  (z32) edge (z21) edge (z41) (z41) edge  (z31) edge(z51) (z21) edge (z11) edge (z31) (z11) edge (e1) (z31) edge (e2) (z51) edge(e3) ;
\end{tikzpicture}
\end{center}
\caption{ $\mathfrak{so}_{2n+1}$ case, $n=3$.}
\label{pat_so}
\end{subfigure}
\begin{subfigure}{.58\textwidth}
\begin{center}
\begin{tikzpicture}[scale=0.9]
\node (z74) at (3, -3) {$z_{7,4}$};
\node (z73) at (5, -3) {$z_{7,3}$};
\node (z72) at (7, -3) {$z_{7,2}$};
\node (z71) at (9, -3) {$z_{7,1}$};
\node (z64) at (2, -2) {$z_{6,4}$};
\node (z63) at (4, -2) {$z_{6,3}$};
\node (z62) at (6, -2) {$z_{6,2}$};
\node (z61) at (8, -2) {$z_{6,1}$};
\node (z54) at (1, -1) {$z_{5,4}$};
\node (z53) at (3, -1) {$z_{5,3}$};
\node (z52) at (5, -1) {$z_{5,2}$};
\node (z51) at (7, -1) {$z_{5,1}$};
\node (z44) at (0, 0) {$z_{4,4}$};
\node (z43) at (2, 0) {$z_{4,3}$};
\node (z42) at (4, 0) {$z_{4,2}$};
\node (z41) at (6, 0) {$z_{4,1}$};
\node (z33) at (1,1) {$z_{3,3}$};
\node (z32) at (3,1) {$z_{3,2}$};
\node (z31) at (5,1) {$z_{3,1}$};
\node (z22) at (2,2) {$z_{2,2}$};
\node (z21) at (4,2) {$z_{2,1}$};
\node (z11) at (3,3) {$z_{1,1}$}; 
\draw[-> ] (z44) edge (z33) edge (z54) (z54) edge (z43) edge(z64)   (z33) edge (z22) edge (z43) (z43) edge (z53) edge (z32) (z53) edge (z63) edge (z42) (z64) edge(z53) edge(z74) (z74) edge (z63) (z63) edge (z52) edge(z73) (z73) edge (z62)  (z52) edge (z41) edge (z62) (z62) edge (z51) edge(z72) (z72) edge (z61) (z61) edge (z71) (z51) edge (z61) (z22) edge (z11) edge (z32) (z32) edge (z21) edge (z42) (z42) edge (z52) edge (z31) (z11)  edge (z21)  (z21) edge (z31)(z31) edge (z41) (z41) edge (z51) ;
\end{tikzpicture}
\end{center}
\caption{generalized $\mathfrak{gl}_n$ case, $n=4,m=7$. }
\label{pat2}
\end{subfigure}

\caption{Tableau of integral variables $z_{i,j}$ (including the fixed bottom row) for $\mathfrak{gl}_n$ Whittaker functions,  $\mathfrak{so}_{2n+1}$ Whittaker functions, and     generalized $\mathfrak{gl}_n$-Whittaker functions;  an arrow $a\rightarrow b $ means that  the term $\frac{a}{b}$ appears in the exponential  term in \eqref{eq7}, \eqref{eq7.1}, or \eqref{eq7.5}, respectively for the three cases.}
\label{fig:test}
\end{figure}
  
Now we introduce a generalization of $\ggl n$-Whittaker functions, following  \cite{OSZ}.
This generalized Whittaker function will not be used in the sequel, but completes our discussion for \eqref{eq:Laplacepointtopoint}.
Suppose  $m\geq n$. Let $\bsb\beta\in\mathbb C^m$ and let $r\in \mathbb C$ be such that  $\Re(r)>0$. 
Consider an array $\bsb{z}=(z_{i,j}: 1\leq i\leq m, 1\leq j\leq i\wedge n)$ (Figure \ref{pat2}).
  We   define the {type} of such an array as 
\[
\operatorname{type}(\boldsymbol{z})_{i}:=\frac{\prod_{j=1}^{i\wedge n} z_{i, j}}{\prod_{j=1}^{(i-1)\wedge n} z_{i-1, j}} \quad \text { for } \quad i=1, \ldots, m.
\]
Denote as above
$$ \operatorname{type}(\boldsymbol{z})^{\boldsymbol{\beta}}:= \prod_{i=1}^{m} \operatorname{type}(\bsb{z})_i^{{\beta}_i}.$$
  The generalized  $\mathfrak{gl}_n$-Whittaker function is also defined  via an integral on  arrays with fixed bottom row:
\begin{equation}\label{eq7.5} \Psi_{\boldsymbol{\beta};r}^{\mathfrak{g l}_{n}}(\boldsymbol{x}):=
\int_{\mathbb{R}_{>0}^{mn-n(n+1)/2}}\operatorname{type}(\boldsymbol{z})^{-\boldsymbol{\beta}}
\exp \left(-\frac{r}{z_{n,n}}-\sum_{i=1}^{m} \sum_{j=1}^{i\wedge n}\frac{z_{i+1, j+1}+z_{i-1,j}}{z_{i, j}}\right) \prod_{\substack{1 \leq i<m\\ 1 \leq j \leq i\wedge n}} \frac{dz_{i, j}}{z_{i, j}},
\end{equation}
where $z_{m,j}:=x_j$ for $1\leq j\leq n$ and  by convention $z_{i,j}=0$ if $(i,j)$ is out
 of the range $1\leq i\leq m, 1\leq j\leq i\wedge n$.  
  
  \medskip
  For a function $g\in L^{2}\left(\mathbb{R}_{>0}^{n}, \prod_{i=1}^{n} {~d} x_{i} / x_{i}\right)$, we define its \emph{Whittaker transform} as
\begin{equation*}
    \widehat{g}(\boldsymbol{\lambda}):=\int_{\mathbb{R}_{>0}^{n}} g(\boldsymbol{x}) \Psi_{\boldsymbol{\lambda}}^{\mathfrak{g} \mathfrak{l}_{n}}(\boldsymbol{x}) \prod_{i=1}^{n} \frac{d x_{i}}{x_{i}}.
\end{equation*}
Let $L_{\mathrm{sym}}^{2}\left((\mathrm{i} \mathbb{R})^{n}, s_{n}(\boldsymbol{\lambda}) d \boldsymbol{\lambda}\right)$ be the space of square integrable functions which are  symmetric with respect to permuting  their variables, equipped with the   \emph{Sklyanin measure} 
\begin{equation}\label{eq63}
    s_{n}(\bsb{\lambda})=\frac{1}{(2 \pi \mathrm{i})^{n} n!} \prod_{j \neq k} \Gamma\left(\lambda_{j}-\lambda_{k}\right)^{-1}.
\end{equation}
It is known  that the Whittaker transformation $g\mapsto \widehat{g}$ 
is   an isometry 
from  $L^{2}(\mathbb{R}_{>0}^{n},  \prod_{i=1}^{n} d x_{i} / x_{i})$  to $L_{\mathrm{sym}}^{2}\left((\mathrm{i} \mathbb{R})^{n}, s_{n}(\boldsymbol{\lambda}) d \boldsymbol{\lambda}\right)$ \cite{STS94,KL01}. 
In particular, the following Plancherel  formula holds.  For all $f_1, f_2 \in L^{2}\left(\mathbb{R}_{>0}^{n}, \prod_{i=1}^{n} d x_{i} / x_{i}\right)$, we have
\begin{equation}\label{plancherel}
    \int_{\mathbb{R}_{>0}^{n}} f_1(\boldsymbol{x}) \overline{f_2(\boldsymbol{x})} \prod_{i=1}^{n} \frac{{d} x_{i}}{x_{i}}=\int_{(\mathrm{i} \mathbb{R})^{n}} \widehat{f_1}(\boldsymbol{\lambda}) \overline{\widehat{f_2}(\boldsymbol{\lambda})} s_{n}(\boldsymbol{\lambda}) {d} \boldsymbol{\lambda}.
\end{equation}

 Now we turn to  the Whittaker transform of a particular function $\mathcal{T}_{\alpha,\bsb{\beta};r}(\bsb{x})$, which  naturally appears in the  Laplace transform of the point-to-line partition function of trapezoidal polymers, and symmetrized polymers in general, see Remark \ref{rem:linktohalfspaceMacdo}.

\begin{definition} \label{defT}
Assume $r>0$, $\alpha\in \mathbb{C}$, $\bsb{\beta}\in \mathbb{C}^m$.
For  $\bsb{x}=(x_1,\dots,x_n)\in \mathbb{R}_{>0}^{n}$, 
consider an array $\bsb z=(z_{i,j}:1\leq i\leq m+1, 1\leq j\leq n)$ with fixed bottom row $z_{m+1,j}=x_j$ for all $1\leq j \leq n$.
Define  the {type} of such an array as 
\[
\operatorname{type}(\boldsymbol{z})_{i}:=\frac{\prod_{j=1}^{n} z_{i+1, j}}{\prod_{j=1}^{n} z_{i, j}} \quad \text { for } \quad i=1, \ldots, m.
\]
Denoting $\operatorname{type}(\bsb z)^{\bsb \beta}=\prod_{i=1}^m \operatorname{type}(\bsb z)_i^{\beta_i}$,
we define
\begin{align}\nonumber
\mathcal{T}_{\alpha,\bsb{\beta};r}(\bsb{x}):= 
 \int_{\mathbb{R}_{>0}^{nm}} &\left(\prod_{i=1}^n (z_{1,i}/r)^{(-1)^{i}}\right)^{\alpha}
\operatorname{type}(\bsb z)^{-\bsb \beta}\\
&\times\exp\left(-\frac{r}{z_{1,n}}-\sum_{i=1}^{m+1}\sum_{j=1}^n \frac{z_{i+1,j+1}+z_{i-1,j}}{z_{i,j}}  \right) \prod_{i=1}^m\prod_{j=1}^n \frac{d z_{i,j}}{z_{i,j}}, 
\label{eq99}
\end{align}
    with the convention that $z_{i,j}=0$ whenever $(i,j)$ is out of the range $1\leq i\leq m+1, 1\leq j\leq n$. We  also define    $ \mathcal{T}_{\alpha, \emptyset;r}(\bsb{x}) = \left(\prod_{i=1}^n(x_i/r)^{(-1)^{i}}\right)^{\alpha} e^{-r/x_n}$. 
\end{definition} 
The well-definedness of   $\mathcal{T}_{\alpha,\bsb{\beta};r}$ will be shown in Proposition \ref{prop:whittransformT}.
See Figure \ref{figT} for a visual representation of  integral variables $(z_{i,j})$. 
 The Whittaker transform of $\mathcal{T}_{\alpha,\bsb{\beta};r}(\bsb{x})$ in the case $\bsb{\beta}=\emptyset$ is known and stated in the following proposition.
\begin{proposition}[{\cite{Stade}, \cite[Corollary 5.4]{OSZ}}]\label{stade}
Suppose $r>0$, $\bsb \alpha \in \mathbb{C}^{n}$, $\alpha_{\circ} \in \mathbb{C}$ satisfy $\Re\left(\alpha_{i}+\alpha_{\circ}\right)>0$ for all $i$ and  $\Re\left(\alpha_{i}+\alpha_j\right)>0$ for   all $i>j$,  then
   \begin{equation*}\int_{\mathbb{R}_{>0}^{n}}  \left(\prod_{i=1}^n(x_i/r)^{(-1)^{i}}\right)^{\alpha_{\circ}} e^{-r/ x_{n}} \Psi_{\boldsymbol{\alpha}}^{\mathfrak{g} \mathfrak{l}_{n}}(\boldsymbol{x}) \prod_{i=1}^{n} \frac{d x_{i}}{x_{i}} =  r^{-\sum_{i=1}^{n} \alpha_{i}} \prod_{i=1}^n \Gamma\left(\alpha_{i}+\alpha_{\circ}\right) \prod_{ i=1}^n\prod_{j=i+1 }^n \Gamma\left(\alpha_{i}+\alpha_{j}\right).
   \end{equation*} 
\end{proposition}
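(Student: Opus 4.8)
The plan is to treat Proposition~\ref{stade} as a known identity (it is ``Stade's formula'', see \cite{Stade}, and in the present normalization \cite[Corollary~5.5]{OSZ}) and to outline a proof. The first move is to remove $r$: substituting $x_i\mapsto x_i/r$ in the left-hand side and rescaling every integration variable $z_{i,j}$ in Givental's formula \eqref{eq7} by $1/r$, one sees that $\Psi^{\ggl n}_{\bsb\alpha}(\bsb x/r)=r^{-\sum_i\alpha_i}\,\Psi^{\ggl n}_{\bsb\alpha}(\bsb x)$, because each $\operatorname{type}(\bsb z)_i$ has exactly one more factor in its numerator than in its denominator (so $\operatorname{type}(\bsb z)^{\bsb\alpha}$ picks up $r^{-\sum\alpha_i}$), while the exponential in \eqref{eq7} and the measures $dz_{i,j}/z_{i,j}$ are scale invariant. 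Since also $e^{-rx_1}\mapsto e^{-x_1}$, $\big(\prod_i(rx_i)^{(-1)^{n-i}}\big)^{\alpha_\circ}\mapsto\big(\prod_i x_i^{(-1)^{n-i}}\big)^{\alpha_\circ}$ and $\prod_i dx_i/x_i$ is scale invariant, both sides scale by $r^{-\sum_i\alpha_i}$, and it suffices to prove the identity for $r=1$.

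I would then induct on $n$. For $n=1$, $\Psi^{\ggl1}_{\alpha_1}(x)=x^{\alpha_1}$ and the left-hand side is $\int_0^\infty x^{\alpha_1+\alpha_\circ-1}e^{-x}\,dx=\Gamma(\alpha_1+\alpha_\circ)$. For the inductive step, peel off the bottom row of \eqref{eq7}: with $y_j:=z_{n-1,j}$ the Givental integral becomes
\[
\Psi^{\ggl n}_{\alpha_1,\dots,\alpha_n}(x_1,\dots,x_n)=\Big(\prod_{j=1}^n x_j\Big)^{\alpha_n}\int_{\mathbb R_{>0}^{n-1}}\Big(\prod_{j=1}^{n-1}y_j\Big)^{-\alpha_n}\exp\!\Big(-\sum_{j=1}^{n-1}\Big(\frac{y_j}{x_j}+\frac{x_{j+1}}{y_j}\Big)\Big)\Psi^{\ggl{n-1}}_{\alpha_1,\dots,\alpha_{n-1}}(\bsb y)\prod_{j=1}^{n-1}\frac{dy_j}{y_j}.
\]
Substituting this into the (now $r=1$) left-hand side, one integrates out $x_1,\dots,x_n$: the corner variable $x_n$ occurs only through $x_n^{\alpha_n+\alpha_\circ}e^{-x_n/y_{n-1}}$ and immediately yields $\Gamma(\alpha_n+\alpha_\circ)$; carrying out the remaining one-variable integrations in a suitable order (and with a suitable change to antidiagonal coordinates), each of the form $\int_0^\infty t^{a-1}e^{-bt}\,dt=b^{-a}\Gamma(a)$ or, in general, $\int_0^\infty t^{a-1}e^{-bt-c/t}\,dt=2(c/b)^{a/2}K_a(2\sqrt{bc})$ — the Bessel case already occurring at $n=2$, where $\Psi^{\ggl2}_{\alpha_1,\alpha_2}(x_1,x_2)=2(x_1x_2)^{(\alpha_1+\alpha_2)/2}K_{\alpha_1-\alpha_2}(2\sqrt{x_2/x_1})$ — one recovers the full product $\prod_i\Gamma(\alpha_i+\alpha_\circ)\prod_{j>i}\Gamma(\alpha_i+\alpha_j)$, the computation being arranged so that after the factors $\Gamma(\alpha_n+\alpha_\circ)\prod_{j<n}\Gamma(\alpha_n+\alpha_j)$ are produced, what remains is the $(n-1)$-variable instance of the identity in the parameters $\alpha_1,\dots,\alpha_{n-1},\alpha_\circ$, to which the induction hypothesis applies. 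The case $n=2$ can be checked directly using $\int_0^\infty t^{s-1}K_\nu(t)\,dt=2^{s-2}\Gamma\big(\tfrac{s+\nu}2\big)\Gamma\big(\tfrac{s-\nu}2\big)$, which returns $\Gamma(\alpha_1+\alpha_\circ)\Gamma(\alpha_2+\alpha_\circ)\Gamma(\alpha_1+\alpha_2)$; the hypotheses $\Re(\alpha_i+\alpha_\circ)>0$ and $\Re(\alpha_i+\alpha_j)>0$ for $i>j$ guarantee absolute convergence throughout.

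I expect the main obstacle to be exactly this bookkeeping in the inductive step: one must choose the order of integration and the change of variables so that the Bessel-$K$ factors created along the way recombine into a lower-rank Whittaker integral instead of proliferating — this is the technical heart of Stade's argument. A less computational alternative, and the one closest to the philosophy of this paper, avoids the recursion altogether: the left-hand side is an integral of $\Psi^{\ggl n}_{\bsb\alpha}$ against $\mathcal T_{\alpha_\circ,\emptyset;r}$ (see Definition~\ref{defT}), and such an integral can be read off from the law of the image of a symmetric array of independent inverse-gamma weights under $\operatorname{gRSK}$, using that this map is volume preserving in logarithmic coordinates (Propositions~\ref{jacob} and \ref{prop2}) together with the independence of the weights — precisely as in \cite{OSZ}, and exactly the mechanism that Section~\ref{sec_proof} will generalize in order to establish \eqref{eq:Laplacetrapezoidal}.
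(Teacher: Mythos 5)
The paper offers no proof of Proposition \ref{stade}: it is quoted as a known identity from \cite{Stade} and \cite[Corollary 5.5]{OSZ}, so treating it as known, with exactly these citations, already matches the paper's own treatment. Within your sketch, the reductions are sound: rescaling $x_i\mapsto x_i/r$ together with $z_{i,j}\mapsto z_{i,j}/r$ in \eqref{eq7} does show that both sides carry the factor $r^{-\sum_i\alpha_i}$, and your $n=1$ and $n=2$ verifications (via the Mellin transform of $K_\nu$) are correct. However, the inductive step of your first route is a statement of intent rather than an argument: the assertion that, after inserting the bottom-row recursion and integrating out $x_1,\dots,x_n$, the integral factorizes as $\Gamma(\alpha_n+\alpha_\circ)\prod_{j<n}\Gamma(\alpha_j+\alpha_n)$ times the $(n-1)$-variable instance with parameters $\alpha_1,\dots,\alpha_{n-1},\alpha_\circ$ is precisely the nontrivial content of Stade's identity (already the $x_n$-integration leaves a residual factor $y_{n-1}^{\alpha_n+\alpha_\circ}$, so the recombination into a lower-rank instance is genuinely delicate), and you explicitly leave it undone. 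As a self-contained proof, that route has a gap exactly where all the work lies.

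Your second route, by contrast, is complete and is the one aligned with the paper's machinery: by the convention of Definition \ref{defT}, $\mathcal{T}_{\alpha_{\circ},\emptyset;r}(\bsb{x})=\big(\prod_{i=1}^n(rx_i)^{(-1)^{n-i}}\big)^{\alpha_{\circ}}e^{-rx_1}$, so Proposition \ref{stade} is literally the case $\bsb{\beta}=\emptyset$ of Proposition \ref{prop:whittransformT}. The proof of that proposition in Section \ref{subsec3.2} (symmetric gRSK applied to the $n\times n$ symmetric inverse-gamma array, volume preservation from Propositions \ref{jacob} and \ref{prop2}, and the observation following \eqref{eq18} that extends the identity to complex parameters with the stated real parts) nowhere invokes Proposition \ref{stade}, so there is no circularity; this is also essentially how \cite[Corollary 5.5]{OSZ} is obtained. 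In short: your citation plus the gRSK mechanism reproduces the proof that is actually available, while the Givental-recursion induction, if you wanted it self-contained, would still require carrying out Stade's computation rather than gesturing at it.
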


\begin{figure}
\begin{center}
\begin{tikzpicture}[scale=0.9]
\node (z14) at (0,0) {$z_{1,4}$};
\node (z13) at (2,0) {$z_{1,3}$};
\node (z12) at (4,0) {$z_{1,2}$};
\node (z11) at (6,0) {$z_{1,1}$};

\node (z24) at (1,-1) {$z_{2,4}$};
\node (z23) at (3,-1) {$z_{2,3}$};
\node (z22) at (5,-1) {$z_{2,2}$};
\node (z21) at (7,-1) {$z_{2,1}$};

\node (z34) at (2,-2) {$z_{3,4}$};
\node (z33) at (4,-2) {$z_{3,3}$};
\node (z32) at (6,-2) {$z_{3,2}$};
\node (z31) at (8,-2) {$z_{3,1}$};

\node (z44) at (3,-3) {$z_{4,4}$};
\node (z43) at (5,-3) {$z_{4,3}$};
\node (z42) at (7,-3) {$z_{4,2}$};
\node (z41) at (9,-3) {$z_{4,1}$};
\draw[-> ] (z14) edge (z24) (z24) edge (z34) edge(z13) (z34) edge(z44) edge (z23) (z13) edge (z23) (z23) edge (z12) edge(z33) (z33) edge(z22) edge (z43) (z44) edge (z33) (z12) edge (z22) (z22) edge (z11) edge (z32) (z32) edge (z21) edge (z42) (z42) edge (z31) (z43) edge(z32) (z11) edge (z21) (z21) edge (z31) (z31) edge (z41); 
\end{tikzpicture}
\end{center}
\caption{Tableau of integral variables $(z_{i,j})$ with fixed bottom row  $z_{m+1,j}=x_j$ in the definition of $\mathcal{T}_{\alpha,\bsb{\beta};r}(\bsb{x})$, $n=4$, $m=3$. }
\label{figT}
\end{figure}

Now we give the Whittaker transform of $
\mathcal{T}_{\alpha,\bsb{\beta};r}(\bsb{x})$. The proof for this formula relies on applying  gRSK
to a symmetric domain not yet introduced, and we postpone the proof to Section \ref{subsec3.2}.
\begin{proposition} \label{prop:whittransformT}
Suppose $r>0$, $\alpha_{\circ}\in\mathbb{C}$, $\bsb{\beta}\in \mathbb{C}^m$, 
then $\mathcal{T}_{\alpha_{\circ},\bsb{\beta};r}(\bsb{x})$ is well-defined for almost  every $\bsb x \in\mathbb R^n_{>0}$. Furthermore, suppose that  $\bsb{\alpha}\in\mathbb{C}^n$ satisfy  $\Re{(\alpha_{\circ}+\alpha_i)}>0$ for all $i\geq 1$, $\Re{(\alpha_i+\alpha_j)}>0$ for all $ 1\leq i<j\leq n$ and $\Re{(\alpha_i+\beta_k)}>0$ for all $ 1\leq i\leq n$, $1\leq k\leq m$. We have
   \begin{equation}\label{eq13}
       \int_{\mathbb{R}^n_{>0}} \mathcal{T}_{\alpha_{\circ},\bsb{\beta};r}(\bsb{x}) \Psi_{\boldsymbol{\alpha}}^{\mathfrak{gl}_n}(\boldsymbol{x}) \prod_{i=1}^n \frac{dx_i}{x_i} = r^{-\sum_{i=1}^n \alpha_i} \prod_{i=1}^n \Gamma(\alpha_i+\alpha_{\circ})\prod_{ i=1}^n\prod_{j=i+1}^n\Gamma(\alpha_i+\alpha_j)\prod_{ i=1}^n\prod_{k=1}^m \Gamma(\alpha_i+\beta_k).
   \end{equation}
\end{proposition}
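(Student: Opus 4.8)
The plan is to realize the integral formula \eqref{eq13} as a computation of the Laplace transform of the partition function of a well-chosen symmetric log-gamma polymer, following the strategy of \cite{BisiZygouras} that the authors have already invoked. First I would choose the appropriate symmetric polygonal domain: one whose free variables are precisely the array $(t_{i,j})$ appearing in Definition \ref{defT} together with the ``bottom row'' $(x_i)$, so that gRSK applied to a symmetric inverse-gamma array supported on this domain produces an output whose diagonal and off-diagonal entries match the combinations $\prod_i (rt_{i,i})^{(-1)^{n-i}}$ and $\prod_i t_{i,i+j}/t_{i,i+j-1}$ that occur in $\mathcal{T}_{\alpha_\circ,\bsb\beta;r}$. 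Concretely, the domain should be (a symmetrization of) a trapezoid of diagonal length $n$ and width $m+1$; the diagonal weights carry parameter $\alpha_\circ$, the weights in the first $n$ anti-diagonal bands away from the diagonal carry parameters $\alpha_j$, and the remaining $m$ bands carry $\beta_k$. This matches exactly the parametrization in \eqref{paramHal} when one identifies the trapezoid there with the present one (up to relabeling), which is why the same function $\mathcal{T}$ shows up.

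The key steps, in order. (i) Write down the symmetric domain and the independent inverse-gamma weights $(W_{i,j})_{i\le j}$ with the parameters just described; invoke the equivalence of Subsection \ref{sec:halftosymmetric} (the analogue of \eqref{eq3.1}) to pass between the symmetrized polymer and the trapezoidal one. (ii) Apply gRSK to this symmetric array and use Proposition \ref{jacob} (volume preservation in log-coordinates) to push the product-of-densities measure forward to a measure on the output array $\bsb t$; here I would use Proposition \ref{gRSKprop}, in particular the identity $\sum_{i,j} w_{i,j}^{-1} = t_{1,1}^{-1} + \sum_{i,j}(t_{i-1,j}+t_{i,j-1})/t_{i,j}$ to produce the exponential term of $\mathcal{T}$, and Proposition \ref{prop2} to convert $\prod_i w_{i,i}$ into $\prod_j t_{j,j}^{(-1)^{n-j}}$, which is the source of the alternating exponents and the power of $r$. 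The remaining monomial factors in the weights, namely $\prod w_{i,j}^{-\theta_{i,j}}$, reorganize via the ``$\tau_q$'' products of Proposition \ref{gRSKprop} into the $\alpha_j$- and $\beta_k$-powers of ratios $t_{i,i+j}/t_{i,i+j-1}$. (iii) Integrate out the variables not on the bottom row: the bottom-row marginal of gRSK on a symmetric trapezoid is (by the recursive Givental-type structure, cf.\ the $\mathfrak{gl}_n$-pattern in Figure \ref{pat} sitting inside) exactly the $\mathfrak{gl}_n$-Whittaker function $\Psi^{\mathfrak{gl}_n}_{\bsb\alpha}(\bsb x)$ against the measure $\prod dx_i/x_i$, with the extra ``tail'' variables contributing the function $\mathcal{T}_{\alpha_\circ,\bsb\beta;r}(\bsb x)$. (iv) On the other hand, the total mass of the pushforward measure is $1$ (it is a probability density), which after carefully tracking the normalizing constants $1/\Gamma(\theta_{i,j})$ of the inverse-gamma densities yields precisely $r^{-\sum\alpha_i}\prod_i\Gamma(\alpha_i+\alpha_\circ)\prod_{j>i}\Gamma(\alpha_i+\alpha_j)\prod_k\Gamma(\alpha_i+\beta_k)$ as the value of the left side of \eqref{eq13}. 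The well-definedness claim (that $\mathcal{T}_{\alpha_\circ,\bsb\beta;r}(\bsb x)<\infty$ for a.e.\ $\bsb x$) follows because the pushforward measure disintegrates as $\mathcal{T}_{\alpha_\circ,\bsb\beta;r}(\bsb x)\,\Psi^{\mathfrak{gl}_n}_{\bsb\alpha}(\bsb x)\prod dx_i/x_i$ times a finite constant, and a nonnegative measurable function whose integral against a positive density is finite is finite a.e.; one should do this step first, perhaps for a convenient choice of $\bsb\alpha$ (e.g.\ real), and then extend \eqref{eq13} to complex $\bsb\alpha$ by analytic continuation in the $\alpha_i,\beta_k$ over the tube domain cut out by the stated real-part conditions, using dominated convergence to justify differentiating under the integral.

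The main obstacle I anticipate is step (iii): correctly identifying which symmetric polygonal domain has the property that its gRSK image, restricted to the bottom row after integrating out all internal variables, reproduces $\Psi^{\mathfrak{gl}_n}_{\bsb\alpha}$ exactly, and that the internal ``trapezoidal tail'' variables disentangle cleanly into the integral defining $\mathcal{T}$ with no leftover coupling. This is essentially a careful bookkeeping of the local-move recursion \eqref{eq19} on the chosen domain — one wants the domain to split, along its main anti-diagonal of length $n$, into a ``triangular half'' that generates the Givental integral \eqref{eq7} and a ``rectangular/trapezoidal half'' of width $m$ that generates \eqref{eq99} — but getting the edge terms of the exponential (the $rt_{1,1}$ term, the boundary conventions $w_{0,1}+w_{1,0}=1$, and the $t_{i,i+m}=x_i$ identification) to line up requires precisely the structure built into Definition \ref{defT}. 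Once the domain is correctly set up, the remaining work is the routine change-of-variables and constant-tracking described above; the special case $\bsb\beta=\emptyset$ recovers Proposition \ref{stade}, which is a useful consistency check, and the case $m=0,\alpha_\circ=0$ recovers \cite[Theorem 3.13]{BisiZygouras}, providing another.
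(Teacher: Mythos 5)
Your proposal follows essentially the same route as the paper's proof: symmetric gRSK applied to an inverse-gamma array on the symmetrized triangle-plus-rectangle domain $\{(i,j)\mid 1\le i\le n,\ i\le j\le n+m\}$, volume preservation in log-coordinates, Propositions \ref{gRSKprop} and \ref{prop2}, the total-mass identity for the product of densities, and a Fubini split along the band $j-i=m$ into the $\mathcal{T}$-part and the $\ggl{n}$-Givental part, with the Gamma normalizing constants producing the right-hand side and Fubini giving the a.e.\ well-definedness. The only differences are cosmetic: the paper avoids your real-parameter-then-analytic-continuation step by noting that the total-mass identity factors into one-dimensional Gamma integrals and so holds verbatim for complex parameters with the stated real-part conditions, and your opening claim that the domain's free variables are ``precisely'' the $\mathcal{T}$-array plus the bottom row is inaccurate (the domain must also contain the triangular variables generating $\Psi^{\ggl{n}}_{\bsb{\alpha}}$), though you correct this yourself in step (iii).
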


  \begin{remark}
 Proposition \ref{prop:whittransformT} shows that the function
   $\mathcal T_{\alpha_{\circ},\bsb{\beta};r}(\bsb{x})$ is similar to the function $\mathcal T_{\alpha}^{\tau}(x)$ used in \cite{HalfSpaceMac} to define the half-space Whittaker process, see Definition 6.7 therein. This is not a surprise, as the proof of Proposition \ref{prop:whittransformT} in Section \ref{subsec3.2} involves a symmetrized polymer model equivalent to the half-space polymer model considered in \cite{HalfSpaceMac}. Moreover, one could show by combining Proposition \ref{prop:whittransformT} and \cite[Prop. 6.10]{HalfSpaceMac} that when $r=1$, $\mathcal T_{\alpha_{\circ},\bsb{\beta};r}(\bsb{x})$ is exactly the $\tau\to 0$ limit of the function $\mathcal T_{\alpha}^{\tau}(x)$ defined in \cite{HalfSpaceMac}, provided that parameters are  appropriately matched. Thus, Definition \ref{defT} provides a Givental's type integral representation for the function $\mathcal T$ involved in the definition of half-space Whittaker processes, while it was defined in \cite{HalfSpaceMac} only indirectly through its Whittaker transform. 
  \label{rem:linktohalfspaceMacdo}
  \end{remark}

  \subsubsection{$\mathfrak{so}_{2 n+1}$-Whittaker functions}

We define the $\mathfrak{so}_{2 n+1}$-Whittaker functions also by an integral formula due to \cite{GLO07,GLO08}. The integral is over a half-triangular array (Figure \ref{pat_so}) of depth $2n$,
\[
\boldsymbol{z}=\left(z_{i, j}: 1 \leq i \leq 2 n,  \lfloor i / 2\rfloor+1\leq j\leq i\right).
\]
We   define the type of such an array as 
\[
\operatorname{type}(\boldsymbol{z})_{i}:=\frac{\prod_{j=\lfloor i/2\rfloor +1}^{i} z_{i, j}}{\prod_{j=\lfloor (i-1)/2\rfloor +1}^{i-1} z_{i-1, j}} \quad \text { for } i=1, \ldots, 2 n.
\]
For $\boldsymbol{\alpha}=\left(\alpha_{1}, \ldots, \alpha_{n}\right) \in \mathbb{C}^{n}$, we denote $
\boldsymbol{\alpha}^{\pm}:=\left(-\alpha_{1},\alpha_{1}, -\alpha_{2},\alpha_{2}, \ldots, -\alpha_{n},\alpha_{n}\right)$ and
$$ \operatorname{type}(\boldsymbol{z})^{\bsb{\alpha}^{\pm}}:= \prod_{i=1}^{2n} \operatorname{type}(\bsb{z})_i^{{\alpha}^{\pm}_i}.$$
Now we define the $\gso{2n+1}$-Whittaker function with parameter $\bsb{\alpha}$ via
\begin{equation}\label{eq7.1}
\Psi_{\boldsymbol{\alpha}}^{\mathfrak{s o}_{2 n+1}}(\boldsymbol{x}):=
\int_{ \mathbb{R}_{>0}^{n^2}} \operatorname{type}(\boldsymbol{z})^{\boldsymbol{\alpha}^{\pm}} 
\exp \left(-\sum_{i=1}^{2 n} \sum_{j=\lfloor i/2\rfloor +1}^{i}\frac{z_{i+1, j+1}+z_{i-1,j}}{z_{i, j}}-\sum_{k=1}^n z_{2k-1,k}\right)
\prod_{\substack{1 \leq i<2 n \\
\lfloor i/2 \rfloor<j\leq i}} \frac{{d} z_{i, j}}{z_{i, j}},
\end{equation}
where on the bottom row $z_{2n,j+\lfloor i/2\rfloor}=x_j$ for $1\leq j\leq \lceil i/2 \rceil$ and by convention $z_{i, j}:=0$ whenever $i,j$ is out of the range $1 \leq i \leq 2 n,  \lfloor i / 2\rfloor+1\leq j\leq i$.
We conclude this subsection with the Whittaker transform of $\gso{2n+1}$-Whittaker functions.
\begin{proposition}[{\cite[Lemma 3.14]{BisiZygouras}}]\label{TransSO}
   Suppose $\mu\in\mathbb{C}$ and $\bsb{\alpha}\in\mathbb{C}^n$ are such that $\Re(\mu)>\max_j\left|\Re\left(\alpha_{j}\right)\right|$. Let
\begin{equation*}
    g(\boldsymbol{x}):=\left(\prod_{i=1}^{n} x_{i}\right)^{\mu} \Psi_{\bsb{\alpha}}^{\mathfrak{s} \mathfrak{0}_{2 n+1}}(\boldsymbol{x}) ,
\end{equation*}
then $g\in L^{2}\left(\mathbb{R}_{>0}^{n}, \prod_{i=1}^{n} {~d} x_{i} / x_{i}\right)$ and 
   \begin{equation*}\label{eq5}
    \widehat{g}(\boldsymbol{\lambda}):=\frac{\prod_{1 \leq i, j \leq n} \Gamma\left(\mu-\lambda_{i}+\alpha_{j}\right) \Gamma\left(\mu-\lambda_{i}-\alpha_{j}\right)}{\prod_{1 \leq i<j \leq n} \Gamma\left(2 \mu-\lambda_{i}-\lambda_{j}\right)}.
\end{equation*}
\end{proposition}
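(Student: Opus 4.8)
The plan is to combine the Givental-type integral \eqref{eq7.1} for $\Psi^{\gso{2n+1}}_{\bsb\alpha}$ with the integral \eqref{eq7} for $\Psi^{\ggl n}_{\bsb\lambda}$, integrate out the bottom row $\bsb x$ together with the adjacent interlacing rows of both arrays, and induct on $n$. For $n=1$ the formula \eqref{eq7.1} collapses, after the substitution $z_{1,1}=\sqrt x\,u$, to the closed form $\Psi^{\gso 3}_{\alpha_1}(x)=2K_{2\alpha_1}(2/\sqrt x)$ with $K_\nu$ the Macdonald function, and then $\widehat g(\lambda)=\int_0^\infty x^{\lambda-\mu}\,2K_{2\alpha_1}(2/\sqrt x)\,\tfrac{dx}{x}$ equals $\Gamma(\mu-\lambda+\alpha_1)\Gamma(\mu-\lambda-\alpha_1)$ after the change of variables $x=1/y^2$ and the classical Mellin evaluation $\int_0^\infty y^{s-1}K_\nu(2y)\,dy=\tfrac14\Gamma(\tfrac{s+\nu}{2})\Gamma(\tfrac{s-\nu}{2})$. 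This base case also explains where the hypothesis on $\mu$ comes from.

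For the $L^2$ assertion I would invoke the standard asymptotics of class-one Whittaker functions, which can be read off \eqref{eq7.1} by rescaling the innermost integration variables: $\Psi^{\gso{2n+1}}_{\bsb\alpha}(\bsb x)$ decays super-exponentially as any $x_i\to 0$, while at infinity it is dominated, up to logarithmic and polynomial corrections, by $\prod_i x_i^{\max_j|\Re\alpha_j|}$. Hence $|g(\bsb x)|^2=(\prod_i x_i)^{-2\Re\mu}\,|\Psi^{\gso{2n+1}}_{\bsb\alpha}(\bsb x)|^2$ is integrable against $\prod_i dx_i/x_i$; the binding constraint is the growth at infinity, which is exactly tamed by $\Re\mu>\max_j|\Re\alpha_j|$.

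For the transform itself, the heart of the matter is the peeling step. Expanding both Whittaker functions as Givental integrals, the variables $x_1,\dots,x_n$ enter only through the type-exponents and finitely many exponential factors of the form $e^{-z/x_i}$, $e^{-x_i/z}$, $x_i^{c}$; integrating them out one at a time produces Macdonald-function factors via $\int_0^\infty x^{a-1}e^{-bx-c/x}\,dx=2(c/b)^{a/2}K_a(2\sqrt{bc})$, and integrating those factors against the neighbouring interlacing rows yields, through classical Barnes-type integrals, precisely the gamma factors of the target that involve $\lambda_n$ or $\alpha_n$, divided by $\prod_{j<n}\Gamma(2\mu-\lambda_n-\lambda_j)$; what remains is exactly the rank-$(n-1)$ instance of the identity with $\bsb\alpha$ and $\bsb\lambda$ each truncated by one entry, so the induction closes. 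Once enough variables have been removed, the residual integral can be evaluated by the $\ggl n$-Stade identity in Proposition~\ref{stade} or its lower-rank versions. Equivalently, one may recognise the fully expanded multivariate integral as a degeneration of a Gustafson $BC_n$-type integral and quote that evaluation.

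I expect the main obstacle to be the bookkeeping rather than any single difficult idea. First, one must justify Fubini and the convergence of every intermediate integral: this uses the strict inequality $\Re\mu>\max_j|\Re\alpha_j|$, the restriction of $\bsb\lambda$ to an appropriate vertical strip (the stated formula ultimately being read on $(\mathrm i\mathbb R)^n$, where the Whittaker transform is an isometry), and, when the parameters $\pm\alpha_1,\dots,\pm\alpha_n$ coincide, an analytic-continuation argument from generic $\bsb\alpha$. Second, and more delicate, is the pairing denominator $\prod_{i<j}\Gamma(2\mu-\lambda_i-\lambda_j)$: it does not appear cleanly at any single peeling step but is assembled from the cross-terms between the interlacing variables of the $\ggl n$ array and the orthogonal symmetry $\alpha_j\mapsto-\alpha_j$, so checking that these cross-terms combine to exactly the claimed product at every stage of the induction is the most demanding computation.
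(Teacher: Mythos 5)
The paper itself does not prove this statement: Proposition \ref{TransSO} is imported verbatim from \cite[Lemma 3.14]{BisiZygouras}, where it rests on a known archimedean integral evaluation for the pairing of a $\gso{2n+1}$- with a $\ggl{n}$-Whittaker function (a Bump--Friedberg/Ishii--Stade type identity), together with a decay estimate giving the $L^2$ claim. Your proposal is thus an attempt to reprove that identity from scratch, and as written it has a genuine gap. The base case is fine: for $n=1$ one indeed gets $\Psi^{\gso{3}}_{\alpha_1}(x)=2K_{2\alpha_1}(2/\sqrt{x})$ and the Mellin transform of the Macdonald function gives $\Gamma(\mu-\lambda+\alpha_1)\Gamma(\mu-\lambda-\alpha_1)$. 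But the inductive step --- which is the entire content of the proposition --- is only described, never carried out. You do not state, let alone verify, an identity showing that integrating out $\bsb{x}$ together with the adjacent rows of the two Givental arrays produces exactly the factors $\prod_{j}\Gamma(\mu-\lambda_n+\alpha_j)\Gamma(\mu-\lambda_n-\alpha_j)\big/\prod_{j<n}\Gamma(2\mu-\lambda_n-\lambda_j)$ while leaving the rank-$(n-1)$ pairing with $\bsb{\alpha}$ and $\bsb{\lambda}$ each truncated by one entry; you yourself flag that the denominator ``does not appear cleanly at any single peeling step'' and that assembling it is ``the most demanding computation.'' Without that computation there is no proof, and it is far from routine bookkeeping: in the Givental patterns each variable to be integrated is coupled to two neighbouring rows of both arrays, a naive row-by-row integration does not close onto the same bilinear form one rank lower, and the known proofs of this evaluation use genuinely different recursive (Mellin--Barnes/Stade-type) representations of the Whittaker functions precisely for this reason. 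The alternative suggestion to ``recognise a degeneration of a Gustafson $BC_n$ integral'' is a pointer, not an argument.

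Two secondary points. The $L^2$ assertion is also only asserted: the claimed asymptotics of $\Psi^{\gso{2n+1}}_{\bsb{\alpha}}$ (super-exponential decay in one regime, growth governed by $\max_j|\Re\alpha_j|$ in the other) are plausible but must be proved, e.g.\ by explicit bounds on the Givental integral as in \cite{BisiZygouras}; this is where the hypothesis $\Re(\mu)>\max_j|\Re(\alpha_j)|$ is actually used. And the appeal to Proposition \ref{stade} for the ``residual integral'' is misplaced: Stade's $\ggl{n}$ identity pairs two $\ggl{n}$-Whittaker functions against an exponential factor and does not by itself produce the orthogonal pairing denominator $\prod_{i<j}\Gamma(2\mu-\lambda_i-\lambda_j)$. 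If you want a self-contained proof, the honest route is to reproduce (or cite) the $\gso{2n+1}\times\ggl{n}$ evaluation of Ishii--Stade, as \cite{BisiZygouras} do.
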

\begin{remark}
Several conventions have been used in the literature in order to define Whittaker functions. Our definition for $\Psi_{\boldsymbol{\alpha}}^{\mathfrak{g l}_{n}}(\boldsymbol{x})$ and $\Psi_{\boldsymbol{\beta}; r}^{\mathfrak{g l}_{n}}(\boldsymbol{x})$ are the same as in \cite{OSZ}, but $\Psi_{\boldsymbol{\alpha}}^{\mathfrak{g l}_{n}}(\boldsymbol{x}) $ corresponds to $\Psi_{\boldsymbol{-\alpha}}^{\mathfrak{g l}_{n}}(\boldsymbol{x}) = \Psi_{\boldsymbol{\alpha}}^{\mathfrak{g l}_{n}}(\boldsymbol{x}') $ in \cite{BisiZygouras} (for $ \boldsymbol{x}=(x_1,\dots, x_n)$, we use the notation $\boldsymbol{x}'=(1/x_n, 1/x_{n-1}, \dots, 1/x_1)$). In \cite{GLO07, GLO08, MacdoProcesses, HalfSpaceMac}, Whittaker functions are defined in terms of the variables $\log(x_i)$ and the parameters $\alpha_{\circ}, \alpha_i, \beta_j$ are sometimes multiplied by $\pm\mathrm i$. 

Our definition of orthogonal Whittaker functions is such that $\Psi_{\bsb{\alpha}}^{\mathfrak{s} \mathfrak{0}_{2 n+1}}(\boldsymbol{x})$ corresponds to $ \Psi_{\bsb{\alpha}}^{\mathfrak{s} \mathfrak{0}_{2 n+1}}(\boldsymbol{x}')$ in \cite{BisiZygouras}. In \cite{GLO07, GLO08}, orthogonal Whittaker functions are defined in terms of the variables $\log(x_i)$.
\end{remark}

   \section{Laplace transform of the partition function for trapezoidal log-gamma polymers}\label{sec_proof}
   This section is  devoted to proving an integral formula for the Laplace transform of $Z^{\rflat}(n;m)$ (Theorem \ref{LapTrfmFl}). We then deduce a  contour integral formula (Corollary \ref{LapTransFla})  from Theorem \ref{LapTrfmFl} by applying the Plancherel formula \eqref{plancherel}. This contour integral formula will enable us, in Section \ref{sec4},  to compare the Laplace transform of $Z^{\rflat}(n;m)$ with that of $Z(n,m)$  and directly prove Theorem \ref{main} for the $m\geq n-1$ case.

  In order to apply  the Plancherel formula, the Whittaker transform of $\mathcal{T}_{\alpha_{\circ},\bsb{\beta};r}$ (Proposition \ref{prop:whittransformT}) will be needed. We compute it in Subsection \ref{subsec3.2},  using a similar approach as  in the  proof of Theorem \ref{LapTrfmFl}.

\subsection{Laplace transform formula for the partition function of trapezoidal log-gamma polymers}\label{sec_3.2}
By using symmetric gRSK and $\gso{2n+1}$--Whittaker functions,   we derive the Laplace transform of $Z^{\rflat}(n;m)$, using a generalization of similar arguments in \cite{OSZ,BisiZygouras}.  
\begin{theorem}\label{LapTrfmFl} 
Assume that parameters $n,m,\alpha_{\circ },\bsb \alpha, \bsb \beta$ and the partition function $  Z^{\rflat}(n;m)$ are   as in Definition \ref{nota}. Assume $\mathcal{T}_{\alpha_{\circ},\bsb{\beta};r}(\bsb{x})$ is as    in Definition \ref{defT}.  We have
\begin{align}\nonumber
\mathbb{E} & \left[e^{-rZ^{\rflat}(n;m)}\right]=\\
& \qquad \frac{r^{\sum_{k=1}^{n}\alpha_{k}}}{ \prod_{ i=1}^n \Gamma\left(\alpha_{i}+\alpha_{\circ} \right) \prod_{ i=1}^n\prod_{j=1}^{n} \Gamma\left(\alpha_{i}+\alpha_{j} \right)\prod_{ i=1}^n\prod_{k=1}^m\Gamma(\alpha_i+\beta_k)} 
\int_{\mathbb{R}_{>0}^{n}} \mathcal{T}_{\alpha_{\circ},\bsb{\beta};r}(\bsb{x}) 
\Psi_{\boldsymbol{\alpha}}^{\mathfrak{s} \mathfrak{o}_{2 n+1}}(\boldsymbol{x})  \prod_{i=1}^{n}  \frac{{d} x_{i}}{x_{i}}.
 \label{eq1.5}
\end{align} 
\end{theorem}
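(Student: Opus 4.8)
The strategy is to compute the Laplace transform of $Z^{\rflat}(n;m)=Z^{\symflat}(n;m)$ by applying the symmetric geometric RSK map to the weight array $(\widetilde W_{i,j})$ of the symmetrized polymer on the $\symflat$-shaped symmetric domain $\mathcal I\cup\mathcal I^t$, where $\mathcal I=\{(i,j)\mid 1\leq i\leq n,\ i\leq j\leq 2n+m-i+1\}$. First I would record that, by \eqref{eq3.1}, it suffices to work with $Z^{\symflat}(n;m)$, and that $Z^{\symflat}(n;m)=t_{n,2n+m-n+1}+t_{2n+m-n+1,n}=2t_{n,n+m+1}$ where $\bsb t=\operatorname{gRSK}(\widetilde{\bsb w})$: indeed $(n,n+m+1)$ is a border index of $\mathcal I\cup\mathcal I^t$, and by the third bullet of Proposition \ref{gRSKprop} the entry $t_{n,n+m+1}$ is exactly the point-to-point partition function from $(1,1)$ to that corner inside the symmetric domain, which equals $Z^{\rflat}(n;m)$ up to the factor coming from diagonal doubling — more precisely one should track here exactly how the sum over endpoints $(k,2n-k+m+1)$ in \eqref{defZflat} arises as $t_{n,n+m+1}$ for the $\symflat$ array. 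So I would write $Z^{\rflat}(n;m) = t_{n,n+m+1}$ with the $\widetilde W$ weights, and then $e^{-rZ^{\rflat}(n;m)}=e^{-r t_{n,n+m+1}}$.

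\textbf{Change of variables.} The key step is the change of variables $\widetilde{\bsb w}\mapsto \bsb t=\operatorname{gRSK}(\widetilde{\bsb w})$, which by Proposition \ref{jacob} has Jacobian of absolute value $1$ in log-coordinates on the $|\mathcal I|$ free variables (those with $i\leq j$). I would write the joint density of $(\widetilde W_{i,j})_{i\leq j}$ as the product of inverse-gamma densities dictated by \eqref{paramHal} — being careful that the diagonal weights $\widetilde W_{i,i}=W_{i,i}/2$, so their density carries an extra factor. Then I substitute using the three remaining bullets of Proposition \ref{gRSKprop} together with Proposition \ref{prop2}: the sum $\sum 1/\widetilde w_{i,j}$ becomes $\frac1{t_{1,1}}+\sum_{(i,j)}\frac{t_{i-1,j}+t_{i,j-1}}{t_{i,j}}$; the monomial prefactor $\prod \widetilde w_{i,j}^{-\theta_{i,j}}$ is re-expressed via the antidiagonal products $\tau_q=\prod_{j-i=q}t_{i,j}$ and via $\prod_i \widetilde w_{i,i} = 4^{-\lfloor n/2\rfloor}\prod_j t_{j,j}^{(-1)^{n-j}}$, where the exponents $\theta_{i,j}$ given by \eqref{paramHal} are designed precisely so that the $t$-monomial factors through the "type" variables of the $\gso{2n+1}$ tableau and the $\mathcal T$ tableau. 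After this substitution the integral over $\bsb t$ splits, along the antidiagonal $i=j$ and along the rows/columns, into (i) a piece involving only the sub-tableau $\{t_{i,j}: i\leq j\leq n\}$ that assembles into $\Psi^{\gso{2n+1}}_{\bsb\alpha}(\bsb x)$ after identifying $z$-variables with $t$-variables via the dictionary in Figure \ref{pat_so} and integrating out, and (ii) a piece involving $\{t_{i,j}: n<j\}$ together with $t_{1,1}$ and the $e^{-rt_{1,1}}$ factor, which assembles into $\mathcal T_{\alpha_\circ,\bsb\beta;r}(\bsb x)$ as in Definition \ref{defT}, with $\bsb x=(t_{i,n+m+1})$ playing the role of the shared bottom/outer row. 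The prefactor $\frac{r^{\sum\alpha_k}}{\prod\Gamma(\cdots)}$ collects the normalizations of the inverse-gamma densities, the factor $4^{\lfloor n/2\rfloor}$ from Proposition \ref{prop2} (which should cancel against the $2$'s from the diagonal density rescaling and the relation $Z^{\symflat}=2t_{n,\cdot}$), and the power of $r$ that appears when one inserts $1 = r^{\sum\alpha_k}/r^{\sum\alpha_k}$ to match the $(rt_{i,i})$ normalization inside $\mathcal T$.

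\textbf{Main obstacle.} The delicate bookkeeping is in step two: matching the inhomogeneity parameters from \eqref{paramHal} against the exponents that naturally appear when the $t$-monomial is rewritten in terms of the "type" functionals of the two tableaux, and verifying that the boundary/diagonal parameter $\alpha_\circ$ lands correctly in the $\gso{2n+1}$ function (which is built from $\bsb\alpha^{\pm}$, not $\alpha_\circ$) versus in $\mathcal T_{\alpha_\circ,\bsb\beta;r}$. Concretely one must check that the reflected weights $W^{\rflat}_{i,j}\sim\operatorname{Gamma}^{-1}(\alpha_i+\alpha_{2n+m-j+1})$ on the right part of the trapezoid pair up with the $W^{\rflat}_{i,j}\sim\operatorname{Gamma}^{-1}(\alpha_i+\alpha_j)$ on the left under the symmetrization, so that the $\bsb\alpha$-dependence organizes into the $\pm$ pattern of the $\gso{2n+1}$ Whittaker function, while the single $\alpha_\circ$ on the diagonal produces the $\prod(rt_{i,i})^{(-1)^{n-i}}$ factor — this is exactly where Proposition \ref{prop2} is used and where the $m=0,\alpha_\circ=0$ specialization must reduce to \cite[Theorem~3.13]{BisiZygouras}. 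I would finish by observing that, since everything is an absolutely convergent integral of positive functions under the stated positivity hypotheses on the parameters, Tonelli's theorem justifies all the interchanges of integration, giving \eqref{eq1.5}; well-definedness of $\mathcal T_{\alpha_\circ,\bsb\beta;r}$ for a.e.\ $\bsb x$ (Proposition \ref{prop:whittransformT}) follows as a byproduct since the left side is finite.
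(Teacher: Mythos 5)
Your overall strategy (pass to the symmetrized array via \eqref{eq3.1}, apply symmetric gRSK with Propositions \ref{gRSKprop}, \ref{jacob}, \ref{prop2}, and split the resulting integral into a $\mathcal T_{\alpha_\circ,\bsb\beta;r}$ piece and a $\Psi^{\gso{2n+1}}_{\bsb\alpha}$ piece) is the same as the paper's, but there is a genuine gap at the very first identification, and it is fatal to the rest of the computation. You set $Z^{\rflat}(n;m)=t_{n,n+m+1}$ (equivalently $Z^{\symflat}=2t_{n,n+m+1}$). By the third bullet of Proposition \ref{gRSKprop}, $t_{n,n+m+1}$ is only the \emph{point-to-point} partition function from $(1,1)$ to the single corner $(n,n+m+1)$, i.e.\ the $k=n$ summand of \eqref{defZflat}. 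The point-to-line partition function is the sum over \emph{all} outer corners: $Z^{\symflat}(n;m)=\sum_{i=1}^n T_{i,2n+m-i+1}+\sum_{i=1}^n T_{2n+m-i+1,i}=2\sum_{i=1}^n T_{i,2n+m-i+1}$. The hedge "one should track how the sum over endpoints arises as $t_{n,n+m+1}$" cannot be discharged, because no such identity holds. This is not a bookkeeping issue: the full sum is exactly what makes the mechanism work. After the inversion $t_{i,j}=(rs_{i,j})^{-1}$, the factor $e^{-2r\sum_{i}T_{i,2n+m-i+1}}$ turns into $\exp\bigl(-\sum_{i=1}^n 1/s_{i,2n+m-i+1}\bigr)$, and these $n$ exponential terms along the outer edge are precisely the terms required by the convention $z_{i,j}=1$ for $j>\lceil i/2\rceil$ in the definition \eqref{eq7.1} of $\Psi^{\gso{2n+1}}_{\bsb\alpha}$. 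With only $e^{-2rT_{n,n+m+1}}$ you get a single such term, and the outer block of variables does not assemble into the orthogonal Whittaker function; you would instead be computing the Laplace transform of the point-to-point half-space partition function, which is exactly the quantity that does \emph{not} admit this factorized form (cf.\ the discussion of \cite{OSZ,HalfSpaceMac} in the introduction).

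A secondary symptom of the same confusion is your description of the split: you assign $\Psi^{\gso{2n+1}}$ to the sub-tableau $\{t_{i,j}:i\le j\le n\}$ and $\mathcal T$ to $\{t_{i,j}:n<j\}$ together with $t_{1,1}$. In the correct computation (after inversion) the near-diagonal block $\{s_{i,j}: i\le j\le i+m-1\}$, which contains $s_{1,1}$, all diagonal entries $s_{j,j}$ (needed for the $\prod_j (rs_{j,j})^{(-1)^{n-j}\alpha_\circ}$ factor) and the $\bsb\beta$-exponents, assembles into $\mathcal T_{\alpha_\circ,\bsb\beta;r}$ ($nm$ variables), while the outer block $\{s_{i,j}: i+m+1\le j\le 2n+m-i+1\}$ ($n^2$ variables, carrying the $\alpha_i$-exponents through $\tau_{2n+m-2i+1}^2/(\tau_{2n+m-2i+2}\tau_{2n+m-2i})$ and the $n$ terms $1/s_{i,2n+m-i+1}$) assembles into $\Psi^{\gso{2n+1}}_{\bsb\alpha}$, the two blocks sharing the row $x_i=s_{i,i+m}$. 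Neither your variable counts nor your placement of the diagonal entries is consistent with Definitions \ref{defT} and \eqref{eq7.1}, so even granting the wrong identification of $Z^{\rflat}$ the proposed factorization would not close. To repair the argument, start from $Z^{\symflat}(n;m)=2\sum_i T_{i,2n+m-i+1}$, write the density of $(W_{i,j})_{i\le j}$, change variables by gRSK, invert, and then perform the split just described.
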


\begin{proof}
Let $\mathcal{I}$ be the trapezoidal  index set  
\[\mathcal{I}:=\{(i,j)\mid  1\leq i\leq n,  i\leq j\leq 2n+m-i+1\}.\] 
As explained in (\ref{eq3.1}), $Z^{\rflat}(n;m)$ is equal in distribution to $Z^{\symflat}(n;m)$, whose  weight array   $\bsb{W}$  (note that $ \bsb W$ corresponds to  $\widetilde{\bsb W}$ in the notation of \eqref{eq3.1})  is defined on  $\mathcal{I}\cup\mathcal{I}^t$ via 
\begin{equation}\label{eq8.5}
W_{i,j}=W_{j,i}\, \, \text{and}\,\,
W_{i, j}^{-1} \sim\left\{\begin{array}{ll}
2\operatorname{Gamma}\left(\alpha_{i}+\alpha_{\circ}\right), & 1 \leq i=j \leq n, \\
\operatorname{Gamma}\left(\alpha_{i}+\alpha_{j}\right), & 1 \leq i<j \leq n, \\
\operatorname{Gamma}\left(\alpha_i+\beta_{j-n}\right), & 1\leq i\leq n, n<j\leq n+m, \\
\operatorname{Gamma}\left(\alpha_{i}+\alpha_{2 n+m-j+1}\right) ,& 1 \leq i \leq n,
i\leq 2n+m-j+1\leq n.
\end{array}\right. 
\end{equation}
Now we calculate the Laplace transform of $Z^{\symflat}$.
To this end,   we will derive the joint distribution of $\bsb{T}=\operatorname{gRSK}(\bsb{W})$,   whose outer entries  coincide with  the   point-to-point partition functions, that is,   $T_{k,2n+m-k+1}=Z(k,2n+m-k+1)$   and $T_{2n+m-k+1,k}=Z(2n+m-k+1,k)$ for $k=1,\dots,n$ (see Proposition \ref{gRSKprop}). The Laplace transform of $Z^{\symflat}$ will then be expressed in terms of this joint distribution. 

Let $\bsb{w}$ be a symmetric  polygonal array   on $\mathcal{I}\cup \mathcal{I}^t$.
By (\ref{eq8.5}), the density of ${(W_{i,j})}_{i\leq j}$ at ${(w_{i,j})}_{i\leq j}$ is given by $L_1\times L_2$, where 
\begin{align*}
    L_1:= 
    &\prod_{i=1}^n  \frac{(2w_{i,i})^{-\alpha_i-\alpha_{\circ}}}{\Gamma(\alpha_i+\alpha_{\circ})}
   \prod_{1\leq i<j\leq n}\frac{w_{i,j}^{-\alpha_i-\alpha_j}}{\Gamma(\alpha_i+\alpha_j)} \\
    & \qquad\times \prod_{i=1}^n\prod_{j=n+1}^{n+m} \frac{w_{i,j}^{-\alpha_i-\beta_{j-n}}}{\Gamma(\alpha_i+\beta_{j-n})}
   \prod_{i=1}^n\prod_{j=n+m+1}^{2n+m-i+1}\frac{w_{i,j}^{-\alpha_i-\alpha_{2n+m-j+1}}}{\Gamma(\alpha_i+\alpha_{2n+m-j+1})},   \\
  L_2:= &  \exp\left(-\frac{1}{2}\sum_{i=1}^n\frac{1}{w_{i,i}}-\sum_{\substack{i<j  }}\frac{1}{w_{i,j}}
     \right)
     \prod_{(i,j)\in\mathcal{I}}\frac{1}{w_{i,j}}.
 \end{align*}
 In the above expression  we have written under the summation symbol  $i<j$ instead of   $(i,j)\in\mathcal{I}\cap\{i<j\}$ for simplicity of notation. We will use this convention in several other places in the proof, which should not cause too much confusion.  
 Using the symmetry of  $\bsb{w}$,
 we gather  $w_{i,j}$'s in $L_1$ according to their exponent   to obtain
 \begin{align*}
    L_1={} &C\prod_{i=1}^n  w_{i,i}^{-\alpha_{\circ}} 
     \prod_{i=1}^n \prod_{j= i}^{2n+m-i+1} w_{i,j} ^{-\alpha_i} \prod_{i=1}^n\prod_{j=i+1}^n w_{i,j}^{-\alpha_j}
     \prod_{i=1}^n\prod_{j=n+1}^{n+m} w_{i,j}^{ -\beta_{j-n}}
     \prod_{i=1}^n\prod_{j=n+m+1}^{2n+m-i+1}w_{i,j}^{-\alpha_{2n+m-j+1}}  \\
     ={} &C
     \prod_{i=1}^n  w_{i,i}^{-\alpha_{\circ}} 
     \prod_{i=1}^n\prod_{j= i}^{2n+m-i+1} w_{i,j} ^{-\alpha_i} \prod_{j=1}^n\prod_{i=j+1}^n w_{j,i}^{-\alpha_i}
     \prod_{j=1}^{m}\prod_{i=1}^n w_{i,j+n}^{ -\beta_{j}}
     \prod_{i=1}^n\prod_{j=i}^{n}w_{i,2n+m-j+1}^{-\alpha_{j}}  \\ 
     ={}&C
     \left(\prod_{i=1}^n   w_{i,i}\right)^{-\alpha_{\circ}} 
     \prod_{i=1}^n\left(\prod_{j=1}^{2n+m-i+1}w_{i,j}\right)^{-\alpha_i}
     \prod_{j=1}^{m}\left(\prod_{i=1}^n w_{i,j+n}\right)^{ -\beta_{j}}
     \prod_{j=1}^n\left(\prod_{i=1}^{j}w_{i,2n+m-j+1}\right)^{-\alpha_{j}} ,
 \end{align*}
 where
 \[
 C = \left(\prod_{i=1}^n2^{\alpha_i+\alpha_{\circ}}\Gamma(\alpha_i+\alpha_{\circ}) \prod_{ i=1}^n\prod_{j=1}^n\Gamma(\alpha_i+\alpha_j)\prod_{ i=1}^n\prod_{k=1}^m \Gamma(\alpha_i+\beta_k)\right)^{-1}.
 \]
 Again, by symmetry of $\bsb{w}$ we have
 \[
 L_2 = \exp\left(-\frac{1}{2}\sum_{(i,j)\in\mathcal{I}\cup\mathcal{I}^t}\frac{1}{{w_{i,j}}}      \right)
     \prod_{(i,j)\in\mathcal{I}}\frac{1}{w_{i,j}}.
 \] 
 Now we do a change of variables via gRSK. Let   
 $$\bsb{T}:=\operatorname{gRSK}(\bsb{W})\quad
\text{ and   }\quad
 \bsb{t}:=\operatorname{gRSK}(\bsb{w}).$$
 By Proposition \ref{prop2}, and observing that $n$ is the diagonal length of $\bsb{w}$,
 \begin{equation}\label{eq11.1}
     \prod_{i=1}^n   w_{i,i} = 4^{-\lfloor n/2\rfloor}\prod_{j=1}^{n}t_{j,j}^{(-1)^{n-j}}.
 \end{equation}
 By  Proposition \ref{gRSKprop} , 
 \begin{equation}\label{eq11.2}
     \prod_{j=1}^{2n+m-i+1}w_{i,j} = \frac{\tau_{2n+m-2i+1}}{\tau_{2n+m-2i+2}},\quad
     \prod_{i=1}^jw_{i,2n+m-j+1} = \frac{\tau_{2n+m-2j+1}}{\tau_{2n+m-2j}} \quad \text{and}\quad  \prod_{i=1}^n w_{i,j+n} = \frac{\tau_j}{\tau_{j-1}},
 \end{equation}
 where
 \begin{equation*}
     \tau_k = \prod_{\substack{j-i=k  \\(i,j)\in\mathcal{I}\cup\mathcal{I}^t}} t_{i,j}.
 \end{equation*}
 Using Proposition \ref{gRSKprop}, 
 we convert the sum in the expression of $L_2$  to
 \begin{align}
     \sum_{i,j}w_{i,j}^{-1}
     &= \frac{1}{t_{1,1}}+\sum_{(i,j)\neq (1,1)} \frac{t_{i-1, j}+t_{i, j-1}}{t_{i, j}}  = \frac{1}{t_{1,1}}+\sum_{1<i=j} \frac{t_{i-1, j}+t_{i, j-1}}{t_{i, j}} +2\sum_{i<j} \frac{t_{i-1, j}+t_{i, j-1}}{t_{i, j}} \nonumber \\
    & = \frac{1}{t_{1,1}}+2\sum_{1<i=j} \frac{t_{i-1, j}}{t_{i, j}} +2\sum_{i<j} \frac{t_{i-1, j}+t_{i, j-1}}{t_{i, j}}   = \frac{1}{t_{1,1}}+2\sum_{1<i\leq j} \frac{t_{i-1, j}}{t_{i, j}} +2\sum_{i<j} \frac{t_{i, j-1}}{t_{i, j}}
    \label{eq11.31},
 \end{align}
 where we omitted the constraint $(i,j)\in\mathcal{I}\cup\mathcal{I}^t$ to ease  notation.
 Inserting   (\ref{eq11.1}), (\ref{eq11.2}) (resp. (\ref{eq11.31})) into the expression of $L_1$ (resp. $L_2$),  and recalling that  $(\log w_{i,j}) \mapsto (\log t_{i,j}) $ has Jacobian $\pm 1$ (see Prop. \ref{jacob}),  we obtain the density of ${(T_{i,j})}_{i\leq j}$ at ${(t_{i,j})}_{i\leq j}$ as
 \begin{align*}
     C \left(4^{-\lfloor n/2\rfloor}\prod_{j=1}^{n}t_{j,j}^{(-1)^{n-j}}\right )^{-\alpha_{\circ}} &\prod_{i=1}^n\left(\frac{\tau_{2n+m-2i+1}^2}{\tau_{2n+m-2i+2}\tau_{2n+m-2i}}\right)^{-\alpha_i}  \prod_{j=1}^m\left(\frac{\tau_j}{\tau_{j-1}} \right)^{-\beta_j} \\
     \times&\exp\left(-\frac{1}{2 t_{1,1}}-\sum_{1<i\leq j} \frac{t_{i-1, j}}{t_{i, j}} -\sum_{i<j} \frac{t_{i, j-1}}{t_{i, j}}   \right)
     \prod_{(i,j)\in\mathcal{I}}\frac{1}{t_{i,j}}.
 \end{align*}
 The density of $(2T_{i,j})_{i\leq j}$ at $(t_{i,j})_{i\leq j}$ takes a clearer form 
  \begin{align*}
     C' \left( \prod_{j=1}^{n}t_{j,j}^{(-1)^{n-j}}\right )^{-\alpha_{\circ}} &\prod_{i=1}^n\left(\frac{\tau_{2n+m-2i+1}^2}{\tau_{2n+m-2i+2}\tau_{2n+m-2i}}\right)^{-\alpha_i}  \prod_{j=1}^m\left(\frac{\tau_j}{\tau_{j-1}} \right)^{-\beta_j}\\
     \times&\exp\left(-\frac{1}{ t_{1,1}}-\sum_{1<i\leq j} \frac{t_{i-1, j}}{t_{i, j}} -\sum_{i<j} \frac{t_{i, j-1}}{t_{i, j}}   \right)
     \prod_{(i,j)\in\mathcal{I}}\frac{1}{t_{i,j}}.
 \end{align*}
 where
 \begin{equation*} C '= \left(\prod_{i=1}^n \Gamma(\alpha_i+\alpha_{\circ}) \prod_{ i=1}^n\prod_{j=1}^n\Gamma(\alpha_i+\alpha_j)\prod_{ i=1}^n\prod_{k=1}^m \Gamma(\alpha_i+\beta_k)\right)^{-1} .
 \end{equation*}
 Note that 
  $$Z^{\symflat}(n;m)=\sum_{i=1}^{n}T_{i,2n+m-i+1}+ \sum_{i=1}^{n}T_{2n+m-i+1,i} = 2\sum_{i=1}^{n}T_{i,2n+m-i+1}.$$
 We may therefore calculate its Laplace transform 
 \begin{align*}
     \mathbb{E}&\left[e^{-r Z^{\symflat}(n;m)}\right]  =\mathbb{E}\left[e^{-r \sum_{i=1}^n 2T_{i,2n+m-i+1}}\right] \\
     &= C' \int_{\mathbb{R }_{>0}^{n(n+m+1)}}  \left(  \prod_{j=1}^{n}t_{j,j}^{(-1)^{n-j}} \right )^{-\alpha_{\circ}} \prod_{i=1}^n\left(\frac{\tau_{2n+m-2i+1}^2}{\tau_{2n+m-2i+2}\tau_{2n+m-2i}}\right)^{-\alpha_i}    
     \prod_{j=1}^m\left(\frac{\tau_j}{\tau_{j-1}} \right)^{-\beta_j}\\
    & \times\exp\left(-\frac{1}{ t_{1,1}}-\sum_{i=2}^n\sum_{j=i}^{2n+m-i+1} \frac{t_{i-1, j}}{t_{i, j}} -\sum_{i=1}^n\sum_{j=i+1}^{2n+m-i+1} \frac{t_{i, j-1}}{t_{i, j}} - r \sum_{i=1}^n t_{i,2n+m-i+1}  \right)
     \prod_{(i,j)\in\mathcal{I}}\frac{dt_{i,j}}{t_{i,j}}.
 \end{align*} 
 By the change of variables $t_{i,j} = r^{-1}{s_{i,j}}$, we obtain  
  \begin{align}
     \mathbb{E}&\left[e^{-r Z^{\symflat}(n;m)}\right]  
     ={} C' r^{\sum_{i=1}^n \alpha_i}\int_{\mathbb{R }_{>0}^{n(n+m+1)}} \left(  \prod_{j=1}^{n}(r/s_{j,j})^{(-1)^{n-j}} \right )^{\alpha_{\circ}}  
      \prod_{i=1}^n\left(\frac{\tilde{\tau}_{2n+m-2i+1}^2}{\tilde{\tau}_{2n+m-2i+2}\tilde{\tau}_{2n+m-2i}}\right)^{-\alpha_i} \nonumber \\
        \times &
    \prod_{j=1}^m  \left(\frac{\tilde{\tau}_j}{\tilde{\tau}_{j-1}} \right)^{-\beta_j}
   \exp\left(-\frac{r}{s_{1,1}}-\sum_{i=2}^n\sum_{j=i}^{2n+m-i+1} \frac{s_{i-1, j}}{s_{i, j}} -\sum_{i=1}^n\sum_{j=i+1}^{2n+m-i+1} \frac{s_{i, j-1}}{s_{i, j}} - \sum_{i=1}^n  {s_{i,2n+m-i+1}}  \right)
     \prod_{(i,j)\in\mathcal{I}}\frac{ds_{i,j}}{s_{i,j}} , \label{eq51}
 \end{align}
where as before 
\[\tilde{\tau}_k = \prod_{\substack{j-i=k  \\(i,j)\in\mathcal{I}\cup\mathcal{I}^t}} s_{i,j}.
\]
We observe that the variables $(s_{i,j})_{\substack{1\leq i\leq n \\ i\leq j\leq i+m-1}}$ and $(s_{i,j})_{\substack{1\leq i\leq n \\   i+m+1\leq j\leq 2n+m-i+1}}$ are not coupled in the integrand. Using the change of variables $s_{i,j}=z_{j-i+1,n-i+1}$ for $(s_{i,j})_{\substack{1\leq i\leq n \\ i\leq j\leq i+m-1}}$, we obtain 
 \begin{align}\nonumber
    \mathcal{T}_{\alpha_{\circ},\bsb{\beta};r}(s_{n,n+m},\dots,s_{1,1+m})= \int_{\mathbb{R }_{>0}^{nm}} & \prod_{i=1}^n\prod_{j=i}^{i+m-1}\frac{ds_{i,j}}{s_{i,j}}
     \left( \prod_{j=1}^{n}(r/s_{j,j})^{(-1)^{n-j}} \right )^{\alpha_{\circ}}  
     \prod_{j=1}^m\left(\frac{\tilde{\tau}_j}{\tilde{\tau}_{j-1}} \right)^{-\beta_j} \\
     &\times\exp\left(-\frac{r}{s_{1,1}}-\sum_{i=2}^n\sum_{j=i}^{i+m-1}  \frac{s_{i-1, j}}{s_{i, j}} -\sum_{i=1}^n\sum_{j=i}^{i+m-1}  \frac{s_{i, j}}{s_{i, j+1}}   \right). \label{eq110}
 \end{align}
 Similarly, using the change of variables $s_{i,j}=z_{i-j+2n+m,-j+2n+m+1}$ for $(s_{i,j})_{\substack{1\leq i\leq n \\   i+m+1\leq j\leq 2n+m-i+1}}$, we see that
 \begin{align}\nonumber 
 \Psi^{\gso{2n+1}}_{\bsb{\alpha}}(s_{n,n+m},\dots,s_{1,1+m})
   =  &\int_{\mathbb{R}_{>0}^{n^2}}
      \prod_{i=1}^n\prod_{j=i+m+1}^{2n+m-i+1}\frac{ds_{i,j}}{s_{i,j}} 
      \prod_{i=1}^n\left(\frac{\tilde{\tau}_{2n+m-2i+1}^2}{\tilde{\tau}_{2n+m-2i+2}\tilde{\tau}_{2n+m-2i}}\right)^{-\alpha_i}  \\
      \times 
     \exp&\left( -\sum_{i=2}^n\sum_{j=i+m}^{2n+m-i+1} \frac{s_{i-1, j}}{s_{i, j}} -\sum_{i=1}^n\sum_{j=i+m}^{2n+m-i} \frac{s_{i, j}}{s_{i, j+1}} - \sum_{i=1}^n  {s_{i,2n+m-i+1}}  \right). \label{eq111}
 \end{align}
Inserting \eqref{eq110} and \eqref{eq111} into \eqref{eq51}, we obtain
\begin{align*}
     \mathbb{E}\left[e^{-r Z^{\symflat}(n;m)}\right]  
&={} C' r^{\sum_{i=1}^n \alpha_i} \int_{\mathbb{R}_{>0}^{n}} \mathcal{T}_{\alpha_{\circ},\bsb{\beta};r}(\bsb{x})
\Psi_{\boldsymbol{\alpha}}^{\mathfrak{s} \mathfrak{o}_{2 n+1}}(\boldsymbol{x})  \prod_{i=1}^{n}  \frac{{d} x_{i}}{x_{i}} ,
\end{align*}
which completes the proof of Theorem \ref{LapTrfmFl}.
\end{proof}

   \subsection{Whittaker transform of $\mathcal{T}_{\alpha,\bsb{\beta};r}$}\label{subsec3.2}
   
   We prove in this subsection Proposition \ref{prop:whittransformT}, which is required for   transforming \eqref{eq1.5} into \eqref{eq2.5}.  The proof is similar to the proof for Theorem \ref{LapTrfmFl}. But instead of considering a $\symflat$--shape
   domain, we consider here the $\symtrapz$--shape index set $\mathcal{I}\cup\mathcal{I}^t$ where
\begin{equation}\label{eq53}
    \mathcal{I}:=\{(i,j)\mid  1\leq i\leq n,  i\leq j\leq n+m\}.
\end{equation}
Let  the   weight array  $\bsb{W}$  be defined on     $\mathcal{I}\cup\mathcal{I}^t$   via
\begin{equation*}\label{eq8.51}
W_{i,j}=W_{j,i}\quad  \text{and}\quad
W_{i, j}^{-1} \sim\left\{\begin{array}{ll}
2\operatorname{Gamma}\left(\alpha_{i}+\alpha_{\circ}\right) ,& 1 \leq i=j \leq n, \\
\operatorname{Gamma}\left(\alpha_{i}+\alpha_{j}\right), & 1 \leq i<j \leq n, \\
\operatorname{Gamma}\left(\alpha_i+\beta_{j-n}\right) , & 1\leq i\leq n, n<j\leq n+m.
\end{array}\right. 
\end{equation*}
We readily see that the joint probability density function of $(W_{i,j})_{i\leq j}$ at $(w_{i,j})_{i\leq j}$ can be written as $L_1\times L_2$, where
\begin{align} \label{eq55}
    L_1 :=  & \prod_{i=1}^n \frac{(2w_{i,i})^{-\alpha_i-\alpha_{\circ}}}{\Gamma(\alpha_i+\alpha_{\circ})}
   \prod_{1\leq i<j\leq n}\frac{w_{i,j}^{-\alpha_i-\alpha_j}}{\Gamma(\alpha_i+\alpha_j)}
   \prod_{i=1}^n\prod_{j=n+1}^{n+m} \frac{w_{i,j}^{-\alpha_i-\beta_{j-n}}}{\Gamma(\alpha_i+\beta_{j-n})},
      \\ 
      \label{eq57}
   L_2 := &  \exp\left(-\frac{1}{2}\sum_{i=1}^n\frac{1}{w_{i,i}}-\sum_{i<j}\frac{1}{w_{i,j}}
     \right)
     \prod_{i\leq j}\frac{1}{w_{i,j}}.
 \end{align}
 In \eqref{eq57}, we have written $i<j $ under summation symbol instead of $ (i,j)\in \mathcal{I}\cap\{i<j\}$ for simplicity of notation.
 Since $L_1L_2$ is a probability density, 
 \begin{equation}\label{eq18}
 \int_{\mathbb{R}_{>0}^{n(m+(n+1)/2)}} L_1 L_2 \prod_{i,j} dw_{i,j} =1.
 \end{equation}
 Observe that, by definition of Gamma functions, (\ref{eq18}) still holds  when the parameters are complex numbers chosen with real parts as in the hypothesis of Proposition \ref{prop:whittransformT}, even though the probabilistic interpretation is lost. In the following we work directly with (\ref{eq18}). 
 
\begin{proof}[Proof of Proposition \ref{prop:whittransformT}]

Let $\bsb{w}$ be a symmetric array   defined on $\mathcal{I}\cup\mathcal{I}^t$ with $\mathcal{I}$ being as in (\ref{eq53}). Let $L_1,L_2$ be as in (\ref{eq55}), (\ref{eq57}). We rewrite  $L_1$ by  gathering the $w_{i,j}$'s  according to their exponent   
 \begin{align*}
    L_1={} &C\prod_{i=1}^n  w_{i,i}^{-\alpha_{\circ}} 
     \prod_{i=1}^n \prod_{j= i}^{n+m} w_{i,j} ^{-\alpha_i} \prod_{i=1}^n\prod_{j=i+1}^n w_{i,j}^{-\alpha_j}
     \prod_{i=1}^n\prod_{j=n+1}^{n+m} w_{i,j}^{ -\beta_{j-n}}  \\
     ={} &C
     \prod_{i=1}^n  w_{i,i}^{-\alpha_{\circ}} 
     \prod_{i=1}^n\prod_{j= i}^{n+m} w_{i,j} ^{-\alpha_i} \prod_{j=1}^n\prod_{i=j+1}^n w_{j,i}^{-\alpha_i}
     \prod_{j=1}^{m}\prod_{i=1}^n w_{i,j+n}^{ -\beta_{j}}  \\ 
     ={}&C
     \left(\prod_{i=1}^n   w_{i,i}\right)^{-\alpha_{\circ}} 
     \prod_{i=1}^n\left(\prod_{j=1}^{n+m}w_{i,j}\right)^{-\alpha_i}
     \prod_{j=1}^{m}\left(\prod_{i=1}^n w_{i,j+n}\right)^{ -\beta_{j}}
       ,
 \end{align*}
 where
 \[
 C = \left(\prod_{i=1}^n2^{\alpha_i+\alpha_{\circ}}\Gamma(\alpha_i+\alpha_{\circ}) \prod_{ i=1}^n\prod_{j=i+1}^n\Gamma(\alpha_i+\alpha_j)\prod_{ i=1}^n\prod_{k=1}^m \Gamma(\alpha_i+\beta_k)\right)^{-1}.
 \]
 By symmetry of $\bsb{w}$ we have
 \[
 L_2 = \exp\left(-\frac{1}{2}\sum_{(i,j)\in\mathcal{I}\cup\mathcal{I}^t}\frac{1}{{w_{i,j}}}      \right)
     \prod_{(i,j)\in\mathcal{I}}\frac{1}{w_{i,j}}.
 \] 
 Now we make a change of variables via gRSK. Let   
 $\bsb{T}:=\operatorname{gRSK}(\bsb{W} )$
 and   
 $\bsb{t}:=\operatorname{gRSK}(\bsb{w} )$.
 By Proposition \ref{prop2}, 
 \begin{equation}\label{eq11.11}
     \prod_{i=1}^n   w_{i,i} = 4^{-\lfloor n/2\rfloor}\prod_{j=1}^{n}t_{j,j}^{(-1)^{n-j}}.
 \end{equation}
 By  Proposition \ref{gRSKprop} , 
 \begin{equation}\label{eq11.21}
     \prod_{j=1}^{n+m }w_{i,j} = \frac{\tau_{n+m-i}}{\tau_{n+m-i+1}}\quad  \text{and}\quad  \prod_{i=1}^n w_{i,j+n} = \frac{\tau_j}{\tau_{j-1}} \quad \text{where}\quad   \tau_k = \prod_{\substack{j-i=k  \\(i,j)\in\mathcal{I}\cup\mathcal{I}^t}} t_{i,j}.
 \end{equation} 
 As in (\ref{eq11.31}),  the sum in $L_2$ can be converted to (here the indices $i,j$ are in $\mathcal{I}\cup\mathcal{I}^t$)
 \begin{align}
     \sum_{i,j}w_{i,j}^{-1}   = \frac{1}{t_{1,1}}+2\sum_{1<i\leq j} \frac{t_{i-1, j}}{t_{i, j}} +2\sum_{i<j} \frac{t_{i, j-1}}{t_{i, j}}.
    \label{eq11.3}
 \end{align}
 Inserting   (\ref{eq11.11}), (\ref{eq11.21}), (\ref{eq11.3}), (\ref{eq55}),   (\ref{eq57})  into (\ref{eq18}), and recalling that  $(\log w_{i,j}) \mapsto (\log t_{i,j}) $ is of Jacobian $\pm 1$ (Proposition \ref{jacob}),  we obtain  
 \begin{align*}
  \nonumber 1= &\int_{\mathbb{R}_{>0}^{n(m+(n+1)/2)}} L_1 L_2 \prod_{i,j} dw_{i,j}\\ \nonumber
   =  {}&C\int_{\mathbb{R}_{>0}^{n(m+(n+1)/2)}}  \left(4^{-\lfloor n/2\rfloor}\prod_{j=1}^{n}t_{j,j}^{(-1)^{n-j}}\right )^{-\alpha_{\circ}} \prod_{i=1}^n\left(\frac{\tau_{n+m-i}}{\tau_{n+m-i+1}}\right)^{-\alpha_i}  \prod_{j=1}^m\left(\frac{\tau_j}{\tau_{j-1}} \right)^{-\beta_j}\\
     &\qquad\times\exp\left(-\frac{1}{2 t_{1,1}}-\sum_{1<i\leq j} \frac{t_{i-1, j}}{t_{i, j}} -\sum_{i<j} \frac{t_{i, j-1}}{t_{i, j}}   \right)
     \prod_{(i,j)\in\mathcal{I}}\frac{dt_{i,j}}{t_{i,j}}. 
 \end{align*}
  By the change of variables $2t_{i,j}=r^{-1}s_{i,j} $,
  \begin{align}
 \int_{\mathbb{R}_{>0}^{n(m+(n+1)/2)}} &\left( \prod_{j=1}^{n}(r/s_{j,j})^{(-1)^{n-j}}\right )^{\alpha_{\circ}} 
 \prod_{i=1}^n\left(\frac{\tilde{\tau}_{n+m-i}}{\tilde{\tau}_{n+m-i+1}}\right)^{-\alpha_i} 
  \prod_{j=1}^m\left(\frac{\tilde{\tau}_j}{\tilde{\tau}_{j-1}} \right)^{-\beta_j}  \nonumber\\ 
    &\times\exp\left(-\frac{r}{s_{1,1}}-\sum_{1<i\leq j} \frac{s_{i-1, j}}{s_{i, j}} -\sum_{i<j} \frac{s_{i, j-1}}{t_{i, j}}   \right)
     \prod_{(i,j)\in\mathcal{I}}\frac{ds_{i,j}}{s_{i,j}}  \nonumber\\
     ={}&r^{\sum_{i=1}^n -\alpha_i}\prod_{i=1}^n \Gamma(\alpha_i+\alpha_{\circ}) \prod_{ i=1}^n\prod_{j=i+1}^n\Gamma(\alpha_i+\alpha_j)\prod_{ i=1}^n\prod_{k=1}^m \Gamma(\alpha_i+\beta_k), \label{eq62} 
 \end{align}
 where 
 $$\tilde{\tau}_q:=\prod_{
 \substack{j-i=q \\ (i,j)\in\mathcal{I}\cup\mathcal{I}^t}
 }s_{i,j}.$$ 
Applying Fubini's theorem to \eqref{eq62}, we obtain that  the partial integral
 \begin{align}  \nonumber
 \int_{\mathbb{R}_{>0}^{nm }}   \left( \prod_{j=1}^{n}(r/s_{j,j})^{(-1)^{n-j}}\right )^{\alpha_{\circ}} &\prod_{j=1}^m\left(\frac{\tilde{\tau}_j}{\tilde{\tau}_{j-1}} \right)^{-\beta_j} \\
  \times \exp &\left(-\frac{r}{s_{1,1}}-\sum_{\substack{1<i \leq j \\ j \leq i+m-1}  } \frac{s_{i-1, j}}{s_{i, j}} -\sum_{i<j\leq i+m} \frac{s_{i, j-1}}{s_{i, j}}   \right)
  \prod_{i=1}^n \prod_{j=i}^{i+m-1} \frac{ds_{i,j}}{s_{i,j}}
  \label{eq100}
 \end{align}
 is convergent  for almost every $(s_{i,j})_{ \substack{(i,j)\in \mathcal{I} \\ j\geq i+m}}\in\mathbb R^{n(n+1)/2}_{>0}$. By definition of $\mathcal{T}_{\alpha_{\circ},\bsb{\beta};r}$, we recognize that \eqref{eq100} is  in fact equal to $ \mathcal{T}_{\alpha_{\circ},\bsb{\beta};r}(s_{n,n+m},\dots,s_{1,1+m})$ (cf. \eqref{eq110}). Thus $\mathcal{T}_{\alpha_{\circ},\bsb{\beta};r}(\bsb x)$ is well-defined for a.e.  $\bsb x \in\mathbb R^n_{>0}$, which concludes the proof of the first claim of Proposition \ref{prop:whittransformT}.
 
 By definition of   $\Psi_{\boldsymbol{\alpha}}^{\mathfrak{gl}_n}$ and using the change of variables $s_{i,j}=z_{i-j+m+n,-j+m+n+1}$, we obtain (cf. \eqref{eq111})
 \begin{align*}
   & \Psi_{\boldsymbol{\alpha}}^{\mathfrak{gl}_n}(s_{n,n+m},\dots,s_{1,1+m})=\\
     & \quad \int_{\mathbb{R}_{>0}^{n((n-1)/2)}} \prod_{i=1}^n\prod_{j=i+m+1}^{n+m}\frac{ds_{i,j}}{s_{i,j}}
 \prod_{i=1}^n\left(\frac{\tilde{\tau}_{n+m-i}}{\tilde{\tau}_{n+m-i+1}}\right)^{-\alpha_i} 
 \exp\left(-\sum_{1<i\leq j-m} \frac{s_{i-1, j}}{s_{i, j}} -\sum_{i+m<j} \frac{s_{i, j-1}}{s_{i, j}}   \right) .
 \end{align*} 
 It thus follows from Fubini's theorem  that   the left hand side of \eqref{eq62} is equal to  
 \[\int_{\mathbb{R}^n_{>0}} \mathcal{T}_{\alpha_{\circ},\bsb{\beta};r}(\bsb{x}) \Psi_{\boldsymbol{\alpha}}^{\mathfrak{gl}_n}(\boldsymbol{x}) \prod_{i=1}^n \frac{dx_i}{x_i},\]
 which, with \eqref{eq62}, concludes the proof of   the second claim of Proposition \ref{prop:whittransformT}.
\end{proof}

  \subsection{Contour integral formula}

    We deduce from Theorem \ref{LapTrfmFl} and Proposition \ref{prop:whittransformT} a contour integral formula for the Laplace transform of $Z^{\rflat}(n;m)$.
 \begin{corollary}\label{LapTransFla} 
 Assume that parameters $n,m,\alpha_{\circ },\bsb \alpha, \bsb \beta$ and the partition function $ Z^{\rflat}(n;m)$ are   as in Definition \ref{nota}. Suppose  $\mu> \max_{1\leq i\leq n}  \alpha_{i} $.
Suppose further that $m\geq n-1$.
We have
   \begin{align}\nonumber
  \mathbb{E}&\left[e^{-r Z^{\rflat}(n;m)}\right]\\
  &=\int_{(\mu+\mathrm{i} \mathbb{R})^{n}} r^{\sum_{i=1}^{n}\left(\alpha_{i}-\lambda_{i}\right)}\prod_{1 \leq i, j \leq n} \Gamma\left(\lambda_{i}-\alpha_{j} \right) 
  \prod_{i=1}^{n} \frac{\Gamma\left(\lambda_{i}+{\alpha}_{\circ} \right)}{\Gamma\left(\alpha_{i}+{\alpha}_{\circ}\right)}\prod_{ i=1}^n\prod_{j=1}^{n} \frac{\Gamma\left(\lambda_{i}+{\alpha}_{j} \right)}{\Gamma\left(\alpha_{i}+{\alpha}_{j}\right)} 
   \prod_{ i=1}^n\prod_{k=1}^m\frac{\Gamma(\lambda_i+\beta_k)}{\Gamma(\alpha_i+\beta_k)}
   s_{n}(\bsb{\lambda}) d \bsb{\lambda}.\label{eq2.5}
\end{align}
\end{corollary}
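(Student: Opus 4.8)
The plan is to start from the integral formula \eqref{eq1.5} of Theorem \ref{LapTrfmFl} and convert it into the contour integral on the right-hand side of \eqref{eq2.5} by applying the Whittaker Plancherel theorem \eqref{plancherel}. The essential point is to recognize the integrand in \eqref{eq1.5}, namely $\mathcal{T}_{\alpha_{\circ},\bsb{\beta};r}(\bsb{x})\,\Psi_{\bsb{\alpha}}^{\mathfrak{so}_{2n+1}}(\bsb{x})$, as a product $f_1(\bsb{x})\,\overline{f_2(\bsb{x})}$ of two functions in $L^2(\mathbb{R}_{>0}^n,\prod dx_i/x_i)$, each of whose Whittaker transforms we already know. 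The natural choice is
\[
f_1(\bsb{x}) = \Bigl(\prod_{i=1}^n x_i\Bigr)^{-\mu}\mathcal{T}_{\alpha_{\circ},\bsb{\beta};r}(\bsb{x}),
\qquad
f_2(\bsb{x}) = \Bigl(\prod_{i=1}^n x_i\Bigr)^{-\mu}\Psi_{\bsb{\alpha}}^{\mathfrak{so}_{2n+1}}(\bsb{x}),
\]
so that $f_1 \overline{f_2} = \mathcal{T}_{\alpha_{\circ},\bsb{\beta};r}\,\Psi_{\bsb{\alpha}}^{\mathfrak{so}_{2n+1}}$ on $(\mathrm i\mathbb{R})^n$ after picking an appropriate real shift. (More precisely one should run the argument along the shifted contour $\mu+\mathrm i\mathbb{R}$, or equivalently absorb the shift into the parameters; I will do the latter, using \eqref{eq:whitransprop} to track the shift.)

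First I would record the two Whittaker transforms. The transform of $f_2$ is given directly by Proposition \ref{TransSO}: for $\Re(\mu)>\max_j|\Re(\alpha_j)|$,
\[
\widehat{f_2}(\bsb{\lambda}) = \frac{\prod_{1\le i,j\le n}\Gamma(\mu-\lambda_i+\alpha_j)\Gamma(\mu-\lambda_i-\alpha_j)}{\prod_{1\le i<j\le n}\Gamma(2\mu-\lambda_i-\lambda_j)}.
\]
The transform of $f_1$ follows from Proposition \ref{prop:whittransformT}: using the identity \eqref{eq:whitransprop}, $\Psi^{\mathfrak{gl}_n}_{\bsb{\lambda}-\mu}(\bsb{x}) = (\prod x_i)^{-\mu}\Psi^{\mathfrak{gl}_n}_{\bsb{\lambda}}(\bsb{x})$ is not quite what appears, so instead I compute $\int \mathcal{T}_{\alpha_\circ,\bsb\beta;r}(\bsb{x})(\prod x_i)^{-\mu}\Psi^{\mathfrak{gl}_n}_{\bsb\lambda}(\bsb{x})\prod dx_i/x_i$ by applying Proposition \ref{prop:whittransformT} with $\bsb{\alpha}$ replaced by $\bsb{\lambda}-\mu$ (whose existence requires the real-part hypotheses, which hold once $\mu$ is chosen large enough relative to $\Re(\alpha_j)$ and $\Re(\beta_k)$, and once $\Re\lambda_i$ equals that of the contour); this yields
\[
\widehat{f_1}(\bsb{\lambda}) = r^{-\sum_i(\lambda_i-\mu)}\prod_{i=1}^n\Gamma(\lambda_i-\mu+\alpha_\circ)\prod_{j=i+1}^n\Gamma(\lambda_i+\lambda_j-2\mu)\prod_{k=1}^m\Gamma(\lambda_i-\mu+\beta_k).
\]
The hypothesis $m\ge n-1$ enters precisely here: it guarantees that $f_1$ has enough decay at infinity (the $m$ extra Gamma factors $\prod_k\Gamma(\lambda_i-\mu+\beta_k)$ together with the $n-1-i$ factors $\prod_{j>i}\Gamma(\lambda_i+\lambda_j-2\mu)$ outweigh the $2n$ factors in $\widehat{f_2}$) for $f_1$ to lie in $L^2$ and for all integrals and Fubini swaps to be justified — without it the Plancherel argument breaks down, which is why the trapezoidal domain was introduced.

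With both transforms in hand, I apply \eqref{plancherel} to get
\[
\int_{\mathbb{R}_{>0}^n}\mathcal{T}_{\alpha_\circ,\bsb\beta;r}(\bsb{x})\Psi^{\mathfrak{so}_{2n+1}}_{\bsb\alpha}(\bsb{x})\prod_{i=1}^n\frac{dx_i}{x_i}
= \int_{(\mathrm i\mathbb{R})^n}\widehat{f_1}(\bsb\lambda)\,\overline{\widehat{f_2}(\bsb\lambda)}\,s_n(\bsb\lambda)\,d\bsb\lambda,
\]
then substitute the explicit expressions, shift the contour from $(\mathrm i\mathbb{R})^n$ to $(\mu+\mathrm i\mathbb{R})^n$ by a change of variables $\lambda_i\mapsto\lambda_i-\mu$ (equivalently, keep track of the shift throughout and never move the contour), use the reflection/conjugation symmetry $\overline{\Gamma(\mu-\lambda_i-\alpha_j)} = \Gamma(\mu-\bar\lambda_i-\bar\alpha_j)$ on the imaginary axis, and simplify: the denominator $\prod_{i<j}\Gamma(2\mu-\lambda_i-\lambda_j)$ of $\widehat{f_2}$ cancels the factors $\prod_{j>i}\Gamma(\lambda_i+\lambda_j-2\mu)$ coming from the $\mathfrak{gl}_n$-part, the products $\prod_{i,j}\Gamma(\lambda_i-\alpha_j)$ assemble from the numerator of $\widehat{f_2}$, and the ratios $\Gamma(\lambda_i+\alpha_\circ)/\Gamma(\alpha_i+\alpha_\circ)$ etc. appear once the prefactor $r^{\sum_k\alpha_k}/(\prod\Gamma(\alpha_i+\alpha_\circ)\Gamma(\alpha_i+\alpha_j)\Gamma(\alpha_i+\beta_k))$ from \eqref{eq1.5} is multiplied in. This gives exactly \eqref{eq2.5}.

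The main obstacle I anticipate is the careful bookkeeping of the spectral shift by $\mu$ and the verification that $f_1\in L^2(\mathbb{R}_{>0}^n,\prod dx_i/x_i)$: one must check that $\mathcal{T}_{\alpha_\circ,\bsb\beta;r}$ (whose well-definedness a.e. comes from Proposition \ref{prop:whittransformT}) decays fast enough, using the $m\ge n-1$ hypothesis, and that the condition $\mu>\max_i\alpha_i$ is compatible with $\Re(\mu)>\max_j|\Re(\alpha_j)|$ and with the positivity hypotheses on $\alpha_i+\alpha_\circ$, $\alpha_i+\alpha_j$, $\alpha_i+\beta_k$ needed to invoke both Proposition \ref{TransSO} and Proposition \ref{prop:whittransformT}. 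The algebraic simplification of the Gamma factors, while lengthy, is routine once the two transforms are correctly written down.
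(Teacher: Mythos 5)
Your overall strategy is the paper's: start from \eqref{eq1.5}, split the integrand into two $L^2$ factors, use Proposition \ref{TransSO} for the $\mathfrak{so}_{2n+1}$ side and Proposition \ref{prop:whittransformT} for the $\mathcal T$ side, invoke Plancherel \eqref{plancherel} (with $m\geq n-1$ supplying integrability), and finish with a contour shift and Gamma cancellations. However, there is a concrete error in the setup that breaks the key step. You attach $(\prod_i x_i)^{-\mu}$ to \emph{both} factors, so $f_1\overline{f_2}=(\prod_i x_i)^{-2\mu}\,\mathcal T_{\alpha_\circ,\boldsymbol\beta;r}\,\Psi^{\mathfrak{so}_{2n+1}}_{\boldsymbol\alpha}$, which is not the integrand of \eqref{eq1.5}; and, consequently, the Whittaker transform of your $\mathcal T$-side function is $\int \mathcal T_{\alpha_\circ,\boldsymbol\beta;r}\,\Psi^{\mathfrak{gl}_n}_{\boldsymbol\lambda-\mu}$, i.e.\ Proposition \ref{prop:whittransformT} with parameters $\boldsymbol\lambda-\mu$. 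Its hypotheses then read $\Re(\alpha_\circ+\lambda_i-\mu)=\alpha_\circ-\mu>0$, $\Re(\lambda_i+\lambda_j-2\mu)=-2\mu>0$ and $\beta_k-\mu>0$ on $\boldsymbol\lambda\in(\mathrm i\mathbb R)^n$, which can never hold for an admissible $\mu$ (Proposition \ref{TransSO} forces $\mu>\max_j|\alpha_j|\geq 0$, and increasing $\mu$ makes these conditions worse, not better, contrary to your claim that they hold "once $\mu$ is chosen large enough"). The error also propagates to the algebra: with your $\widehat{f_1}$ the factors $\Gamma(\lambda_i+\lambda_j-2\mu)$ do not cancel $\overline{\prod_{i<j}\Gamma(2\mu-\lambda_i-\lambda_j)}=\prod_{i<j}\Gamma(2\mu+\lambda_i+\lambda_j)$, so the claimed simplification fails.

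The fix is the opposite sign on the $\mathcal T$ side: take $f_1=(\prod_i x_i)^{-\mu}\Psi^{\mathfrak{so}_{2n+1}}_{\boldsymbol\alpha}$ and $f_2=(\prod_i x_i)^{+\mu}\mathcal T_{\alpha_\circ,\boldsymbol\beta;r}$, so the powers cancel in $f_1\overline{f_2}$ and, by \eqref{eq:whitransprop}, $\widehat{f_2}(\boldsymbol\lambda)=\int\mathcal T_{\alpha_\circ,\boldsymbol\beta;r}\,\Psi^{\mathfrak{gl}_n}_{\boldsymbol\lambda+\mu}$; now all real parts are $\mu+\alpha_\circ$, $2\mu$, $\mu+\beta_k$, positive since $\mu>\max_i\alpha_i$ together with $\alpha_i+\alpha_\circ>0$, $\alpha_i+\beta_k>0$, and Proposition \ref{prop:whittransformT} applies, after which the denominator of $\widehat{f_1}$ cancels against $\overline{\widehat{f_2}}$ and the change of variables $\boldsymbol\lambda\mapsto -\boldsymbol\lambda+\mu$ gives \eqref{eq2.5}. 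One more point to tighten: the membership $f_2\in L^2(\mathbb R_{>0}^n,\prod dx_i/x_i)$ is not obtained in the paper by estimating the decay of $\mathcal T$ directly, but by showing $\widehat{f_2}\in L^2_{\mathrm{sym}}((\mathrm i\mathbb R)^n,s_n\,d\boldsymbol\lambda)$ via Gamma asymptotics, where the bound $|\widehat f_2|^2|s_n|\lesssim P(\boldsymbol\lambda)\,e^{-\pi(m-n+2)\sum_i|\lambda_i|}$ is exactly where $m\geq n-1$ enters (the competition is against the exponential growth of the Sklyanin measure, not against the factors of the $\mathfrak{so}$-transform as your heuristic suggests). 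With these corrections your argument coincides with the paper's proof.
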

\begin{proof}

Let
\begin{equation*}
    f_1(\bsb{x}):= \Psi^{\mathfrak{so}_{2n+1}}_{\bsb{\alpha}}(\bsb{x})\left(\prod_{i=1}^nx_i\right)^{\mu} \quad \text{and} \quad f_2(\bsb{x}):=\left(\prod_{i=1}^nx_i\right)^{-\mu} \mathcal{T}_{\alpha_{\circ},\bsb{\beta};r}(\bsb{x}).
\end{equation*}
Then by Theorem \ref{LapTrfmFl},
 \begin{align}  \label{eq21}
\mathbb{E}\left[e^{-rZ^{\rflat}(n;m)}\right]=&{} \frac{r^{\sum_{k=1}^{n}\alpha_{k}} \int_{\mathbb{R}_{>0}^{n}} f_1(\boldsymbol{x}) \overline{f_2(\boldsymbol{x})} \prod_{i=1}^{n} {{d} x_{i}}/{x_{i}}}{ \prod_{ i=1}^n \Gamma\left(\alpha_{i}+\alpha_{\circ} \right) \prod_{ i=1}^n\prod_{j=1}^{n} \Gamma\left(\alpha_{i}+\alpha_{j} \right)\prod_{ i=1}^n\prod_{k=1}^m\Gamma(\alpha_i+\beta_k)}.
\end{align}
By Proposition \ref{TransSO}, $f_1\in L^{2}\left(\mathbb{R}_{>0}^{n}, \prod_{i=1}^{n}{~d} x_{i} / x_{i}\right)$  and 
\begin{equation*}
    \widehat{f}_1(\boldsymbol{\lambda}):=\frac{\prod_{1 \leq i, j \leq n} \Gamma\left(\mu-\lambda_{i}+\alpha_{j}\right) \Gamma\left(\mu-\lambda_{i}-\alpha_{j}\right)}{\prod_{1 \leq i<j \leq n} \Gamma\left(2 \mu-\lambda_{i}-\lambda_{j}\right)}.
\end{equation*}
 Now we calculate the Whittaker transform of  $f_2 $, 
\begin{align}\label{eq97}
     \widehat{f}_2(\boldsymbol{\lambda})&= \int_{\mathbb{R}_{>0}^{n}} \left(\prod_{i=1}^nx_i\right)^{-\mu}  \mathcal{T}_{\alpha_{\circ},\bsb{\beta};r}(\bsb{x}) \Psi_{\boldsymbol{\lambda}}^{\mathfrak{g l}_n}(\boldsymbol{x}) \prod_{i=1}^{n} \frac{{d} x_{i}}{x_{i}} 
     = \int_{\mathbb{R}_{>0}^{n}}  \mathcal{T}_{\alpha_{\circ},\bsb{\beta};r}(\bsb{x}) \Psi_{\boldsymbol{\lambda}+\mu}^{\mathfrak{g l}_n}(\boldsymbol{x}) \prod_{i=1}^{n} \frac{{d} x_{i}}{x_{i}},
\end{align} 
where the last equality holds due to \eqref{eq:whitransprop}.
Recall that $\bsb \lambda \in (\mathrm{i}\mathbb{R})^n$, 
the hypothesis $\mu> \max_{1\leq i\leq n}  \alpha_{i}$  thus enables us to  apply Proposition \ref{prop:whittransformT} to the RHS of \eqref{eq97} 
\begin{align*}
     \widehat{f}_2(\boldsymbol{\lambda}) &=  {}r^{-\sum_{i=1}^{n} (\lambda_{i}+\mu)} \prod_{i=1}^n \Gamma\left(\lambda_{i}+\alpha_{\circ}+\mu\right) \prod_{1\leq i<j\leq n} \Gamma\left(2\mu+\lambda_{i}+\lambda_{j}\right)\prod_{i=1}^n\prod_{k=1}^m\Gamma(\lambda_i+\mu+\beta_k).
\end{align*} 
 Now we show that $\widehat{f}_2$ is square integrable, which will imply that ${f}_2$  also is square integrable and hence enable us to apply the Plancherel formula to $ \int_{\mathbb{R}_{>0}^{n}} f_1(\boldsymbol{x}) \overline{f_2(\boldsymbol{x})} \prod_{i=1}^{n} {{d} x_{i}}/{x_{i}}$.
 Recall the following equality involving  the Gamma function
 \begin{align*}
& |\Gamma(\mathrm{i}y)|^{2}=\frac{\pi}{y \sinh \pi y}\quad \text{for}\quad y\in \mathbb{R}\setminus \{0\}, 
\end{align*}
which yields a bound for  the Sklyanin measure  (see \eqref{eq63})
\[
  | s_{n}(\bsb{\lambda})|\leq   \prod_{i<j } |\lambda_{i}-\lambda_{j}| e^{\pi|\lambda_{i}-\lambda_{j}|}.
    \] 
    In order to derive upper bounds for terms in  $\widehat{f}_2$, we use   the following properties of the Gamma function
 \begin{align*}
 &|\Gamma(x+\mathrm{i} y)| \leq |\Gamma(x)| \quad \text{for}\quad  x\in\mathbb{R}_{>0},\, y\in \mathbb{R}, \\
&  |\Gamma(x+\mathrm{i} y)| = \sqrt{2 \pi}|y|^{x-\frac{1}{2}} e^{-\frac{\pi}{2}|y|+O(1/|y|)} \quad
 \text{as} \, \, |y| \rightarrow \infty \, \,\text{with fixed}\, \,x\in\mathbb{R}.
\end{align*}
 It follows
\[\left|\prod_{1\leq i<j\leq n} \Gamma\left(2\mu+\lambda_{i}+\lambda_{j}\right)\right| \leq \Gamma\left(2\mu\right)^{n(n-1)/2},
\]
\[\left|\prod_{i=1}^n \Gamma\left(\lambda_{i}+\alpha_{\circ}+\mu\right)
\prod_{ i=1}^n \prod_{k=1}^m\Gamma(\lambda_i+\mu+\beta_k)\right|
= (2\pi)^{\frac{n(m+1)}{2}}e^{-\frac{\pi}{2}\sum_{i=1}^n (m+1)(|\lambda_i|+O(\frac{1}{|\lambda_i|}))}\prod_{i=1}^n|\lambda_i|^{\alpha_{\circ}+\mu-\frac{1}{2}+\sum_{k=1}^m(\beta_k+\mu-\frac{1}{2})} 
.\]
 Therefore,  as $|\lambda_i|\rightarrow\infty$ for all $1\leq i\leq n$,
 \begin{align}\nonumber
     |\widehat{f}_2|^2|s_n(\bsb{\lambda}) |&\leq P(\bsb{\lambda})\exp\left(-\pi\sum_{i=1}^n(m+1)|\lambda_i|
     \right)
     \exp\left(     \pi \sum_{i<j}\left|\lambda_{i}-\lambda_{j}\right|
     \right) \\ \nonumber
    & \leq P(\bsb{\lambda})\exp\left(
    -\pi\sum_{i=1}^n(m+1)|\lambda_i|  +  \pi \sum_{i<j}|\lambda_{i}|+|\lambda_{j}|
     \right) \\
      & =P(\bsb{\lambda})\exp\left(
    -\pi\sum_{i=1}^n(m-n+2)|\lambda_i|   
     \right) , \label{eq98}
 \end{align}
 with $P(\bsb{\lambda})$ taking the form 
 \[
 P(\bsb{\lambda}) =C_1 \left(\prod_{i=1}^n |\lambda_i|\right)^{C_2}\prod_{i<j} |\lambda_i-\lambda_j|
 \]
 where $C_1$ and $C_2$ are constants that do not depend on $\bsb{\lambda}$.
 Since $m\geq n-1$ by hypothesis, the   estimate \eqref{eq98} shows that $\widehat{f_2}$ is square-integrable.
 
By the Plancherel formula (\ref{plancherel}),
\begin{align*}
    &\int_{\mathbb{R}_{>0}^{n}} f_1(\boldsymbol{x}) \overline{f_2(\boldsymbol{x})} \prod_{i=1}^{n} \frac{{d} x_{i}}{x_{i}}\\
    =& \int_{(\mathrm{i}\mathbb {R})^{n}} r^{\sum_{i=1}^{n} (\lambda_{i}-\mu)}  
    \prod_{1 \leq i, j \leq n} \Gamma\left(\mu-\lambda_{i}+\alpha_{j}\right) \Gamma\left(\mu-\lambda_{i}-\alpha_{j}\right) \\
   & \qquad \qquad   \times\prod_{i=1}^n \Gamma\left(-\lambda_{i}+\alpha_{\circ}+\mu\right)
    \prod_{i=1}^n\prod_{k=1}^m\Gamma(-\lambda_i+\mu+\beta_k)
    s_{n}(\boldsymbol{\lambda}) {d} \boldsymbol{\lambda} \\
    =&\int_{(\mu+\mathrm{i} \mathbb { R })^{n}} r^{-\sum_{i=1}^{n} \lambda_{i}}  \prod_{1 \leq i, j \leq n}   \Gamma\left(\lambda_{i}-\alpha_{j}\right)  \prod_{i=1}^n 
    \Gamma\left(\lambda_{i}+\alpha_{\circ}\right)  \prod_{ i=1}^n\prod_{j=1}^n
    \Gamma\left(\lambda_{i}+\alpha_j\right) 
    \prod_{i=1}^n\prod_{k=1}^m\Gamma(\lambda_i+\beta_k)
    s_{n}(\boldsymbol{\lambda}) {d} \boldsymbol{\lambda}.
\end{align*}
where the change of   variables $\bsb{\lambda}$ to $-\bsb{\lambda}+\mu$ was used in the last equality. Now it suffices to insert this back to (\ref{eq21}) to complete the proof.
\end{proof}

   \section{Proof of  Theorem \ref{main}}\label{sec4}
   To prove Theorem \ref{main}, we compare   the Laplace transform of ${Z}^{\rflat}(n;m)$, given by Corollary \ref{LapTransFla},  to that of the point-to-point partition function $Z(n,n+m+1)$ (as in Definition \ref{nota}), which was computed  in \cite{COSZ}. We will see that the Laplace transforms of the two partition functions are the same, which yields the identity in law when $m\geq n-1$. We then  prove the general case by a probabilistic argument.

   Consider the    weight array  
   \begin{equation}\label{param_nv}
   W_{i,j}^{-1}\sim \operatorname{Gamma}({\alpha}_i+\widehat{\alpha}_j)\quad
   \text{on}\quad \{(i,j)\mid 1\leq i\leq n, 1\leq j\leq N\}
   \end{equation}
   and the point-to-point partition function $Z(n,N)$   defined by this weight array via \eqref{defZ}. The Laplace transform of $Z(n,N)$ can be expressed as a contour integral.
   
\begin{proposition}[{\cite[Theorem 3.9]{COSZ}}] \label{lapTrans} 
Suppose $n\leq  N$. Suppose    ${\alpha}_1,\dots,{\alpha}_n$, $\widehat{\alpha}_1,\dots,\widehat{\alpha}_N$ are such that ${\alpha}_i+\widehat{\alpha}_j>0$ for all $i,j$. Let $Z(n,N)$ be  the point-to-point partition function for the log-gamma polymer with weight array   (\ref{param_nv}). Let $\mu>\max_{i,j}\{-\widehat{\alpha}_j,{\alpha}_i\}$. Then for all $r>0$, the following formula holds
   \begin{equation*}
   \mathbb{E}\left[e^{-r Z(n,N)}\right]=\int_{(\mu+\mathrm{i} \mathbb{R})^{n}}  r^{\sum_{i=1}^{n}\left({\alpha}_{i}-\lambda_{i}\right)} \prod_{1 \leq i, j \leq n} \Gamma\left(\lambda_{i}-{\alpha}_{j}\right)  \prod_{i=1}^{n} \prod_{j=1}^{N} \frac{\Gamma\left(\lambda_{i}+\widehat{\alpha}_{j}\right)}{\Gamma\left({\alpha}_{i}+\widehat{\alpha}_{j}\right)}s_{n}(\bsb{\lambda}) d \bsb{\lambda}.
\end{equation*} 
\end{proposition}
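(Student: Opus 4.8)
Since this is \cite[Theorem~3.9]{COSZ}, the plan is only to recall the strategy, which runs parallel to the proof of Corollary~\ref{LapTransFla} above, with $\ggl{n}$-Whittaker functions playing the role that $\gso{2n+1}$-Whittaker functions played there.

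First I would apply the geometric RSK correspondence to the $n\times N$ weight array $\bsb W=(W_{i,j})$. By Proposition~\ref{gRSKprop}, the image $\bsb T=\operatorname{gRSK}(\bsb W)$ has corner entry $T_{n,N}=Z(n,N)$, its log-coordinate Jacobian is $\pm1$, and the energy $\sum_{i,j}W_{i,j}^{-1}$ becomes $t_{1,1}^{-1}+\sum_{(i,j)}(t_{i-1,j}+t_{i,j-1})/t_{i,j}$. Substituting into the product-of-inverse-Gamma density of $\bsb W$ and integrating out every coordinate except the bottom row $\bsb x:=(t_{n,1},\dots,t_{n,n})$, one recognizes from Givental's integral formulas \eqref{eq7}--\eqref{eq7.5} that the marginal law of $\bsb x$ has density proportional to $\Psi^{\ggl n}_{-\bsb\alpha}(\bsb x)\,\Psi^{\ggl n}_{\widehat{\bsb\alpha};1}(\bsb x)$ with respect to $\prod_i dx_i/x_i$, where $\Psi^{\ggl n}_{\widehat{\bsb\alpha};1}$ is the generalized $\ggl{n}$-Whittaker function \eqref{eq7.5} attached to the $N$ columns (an ordinary $\ggl{n}$-Whittaker function when $N=n$). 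Tracking $Z(n,N)=T_{n,N}$ and carrying out the substitution $t_{i,j}\mapsto(rs_{i,j})^{-1}$ as in the proof of Theorem~\ref{LapTrfmFl} produces the Whittaker-integral representation
\begin{equation*}
\mathbb E\left[e^{-rZ(n,N)}\right]=\frac{1}{\prod_{i=1}^n\prod_{j=1}^N\Gamma(\alpha_i+\widehat\alpha_j)}\int_{\mathbb R_{>0}^n}e^{-rx_1}\,\Psi^{\ggl n}_{\widehat{\bsb\alpha};1}(\bsb x)\,\Psi^{\ggl n}_{-\bsb\alpha}(\bsb x)\prod_{i=1}^n\frac{dx_i}{x_i},
\end{equation*}
which is \eqref{eq:Laplacepointtopoint} in the parametrization of Definition~\ref{nota}.

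Next, mirroring the proof of Corollary~\ref{LapTransFla}, I would split the integrand as $f_1(\bsb x)\overline{f_2(\bsb x)}$ with $f_1(\bsb x)=(\prod_i x_i)^{-\mu}\Psi^{\ggl n}_{-\bsb\alpha}(\bsb x)=\Psi^{\ggl n}_{-\bsb\alpha-\mu}(\bsb x)$ (using \eqref{eq:whitransprop}) and $f_2(\bsb x)=(\prod_i x_i)^{\mu}e^{-rx_1}\Psi^{\ggl n}_{\widehat{\bsb\alpha};1}(\bsb x)$, choosing $\mu>\max_{i,j}\{-\widehat\alpha_j,\alpha_i\}$ so that both factors lie in $L^2(\mathbb R_{>0}^n,\prod_i dx_i/x_i)$. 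The Whittaker transform of $f_2$ is computed by a Stade-type integral identity --- the $\ggl{n}$ analogue of Proposition~\ref{prop:whittransformT}, valid for $N\geq n$ --- and equals, up to an explicit power of $r$, a product of Gamma functions in $\bsb\lambda$; the Whittaker transform of $f_1$ is the (known) $\ggl{n}$-Whittaker transform of a single $\ggl{n}$-Whittaker function, again a ratio of products of Gamma functions. Applying the Plancherel formula \eqref{plancherel} and then the change of variables $\bsb\lambda\mapsto\mu-\bsb\lambda$, under which the Sklyanin measure $s_n$ is invariant, collapses the integral over $(\mathrm i\mathbb R)^n$ into the advertised contour integral over $(\mu+\mathrm i\mathbb R)^n$, the normalization $\prod_{i,j}\Gamma(\alpha_i+\widehat\alpha_j)$ being absorbed into the denominator of the integrand.

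The hard part will be the analytic bookkeeping that legitimizes the Plancherel step: one must verify that $f_1,f_2\in L^2$ and that $|\widehat{f_1}(\bsb\lambda)\,\widehat{f_2}(\bsb\lambda)\,s_n(\bsb\lambda)|$ is integrable over $(\mathrm i\mathbb R)^n$. This is precisely where the hypothesis $n\leq N$ enters --- in the same way $m\geq n-1$ entered in \eqref{eq98} --- via the decay of the Gamma factors in $\widehat{f_2}$ along the imaginary directions (one factor per column, each contributing $e^{-\pi|\lambda_i|/2}$ through $|\Gamma(x+\mathrm iy)|\sim\sqrt{2\pi}\,|y|^{x-1/2}e^{-\pi|y|/2}$), which must dominate the exponential growth coming from the Sklyanin measure (estimated via $|\Gamma(\mathrm iy)|^2=\pi/(y\sinh\pi y)$) and from $\widehat{f_1}$. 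Granting this integrability, the contour integral formula follows, and one checks that $\mu>\max_{i,j}\{-\widehat\alpha_j,\alpha_i\}$ is exactly the condition making all the Gamma arguments in the statement have positive real part along the contour.
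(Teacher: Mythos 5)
Your overall strategy (gRSK on the rectangular array, then a Whittaker--Plancherel argument) is essentially how \cite{COSZ} itself proves its Theorem 3.9, but it is not what the paper does, and the key analytic step of your sketch breaks down. The paper treats the proposition as a citation: COSZ's contour formula is available for ${\alpha}_i<0<\widehat{\alpha}_j$ with contour $(\mathrm i\mathbb R)^n$, and the paper's entire proof consists of writing ${\alpha}_i+\widehat{\alpha}_j=({\alpha}_i-\mu)+(\widehat{\alpha}_j+\mu)$, applying the cited formula to the shifted parameters (which satisfy the sign condition precisely because $\mu>\max_{i,j}\{-\widehat{\alpha}_j,{\alpha}_i\}$), and then changing variables $\bsb\lambda\mapsto\bsb\lambda-\mu$ to move the contour to $(\mu+\mathrm i\mathbb R)^n$. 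Your proposal never performs this reduction; it attempts to reprove the COSZ theorem wholesale, which would be acceptable only if the re-derivation were sound.

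It is not sound at the Plancherel step. You split the integrand with $f_1(\bsb x)=(\prod_i x_i)^{-\mu}\Psi^{\ggl n}_{-\bsb\alpha}(\bsb x)=\Psi^{\ggl n}_{-\bsb\alpha-\mu}(\bsb x)$ and assert that $f_1\in L^2$ and that its Whittaker transform is a known Gamma product. Neither holds: already for $n=1$ one has $f_1(x)=x^{-\mu-\alpha_1}$, which is not in $L^2(\mathbb R_{>0},dx/x)$, and $\widehat{f_1}(\lambda)=\int_0^\infty x^{\lambda-\mu-\alpha_1-1}\,dx$ diverges for every $\lambda\in\mathrm i\mathbb R$. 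More generally, the pairing of two $\ggl{n}$-Whittaker functions with no damping factor (no $e^{-rx_1}$, no $e^{-s/x_n}$) is not a convergent integral; there is no $\ggl{n}$ analogue of Proposition \ref{TransSO}, and it is exactly the extra decay of $\gso{2n+1}$-Whittaker functions that lets the paper run the $(f_1,f_2)$ trick in Corollary \ref{LapTransFla}. The workable splitting (the one in \cite{COSZ}) keeps $e^{-rx_1}$ together with $\Psi^{\ggl n}_{-\bsb\alpha}$, so that the Bump--Stade identity computes that transform, and pairs it against the generalized function $\Psi^{\ggl n}_{\widehat{\bsb\alpha};1}$, whose transform is a Gamma product thanks to the damping factor $e^{-s/z_{n,n}}$ in \eqref{eq7.5} (so also your parenthetical claim that for $N=n$ it is an ordinary $\ggl{n}$-Whittaker function is inaccurate); square integrability then forces the sign restriction ${\alpha}_i<0<\widehat{\alpha}_j$, and the general statement with contour $(\mu+\mathrm i\mathbb R)^n$ is recovered precisely by the parameter-shift argument that constitutes the paper's proof.
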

\begin{proof} 
In \cite{COSZ}, it is proved that if
${\alpha}_1,\dots,{\alpha}_n<0$ and 
$\widehat{\alpha}_1,\dots,\widehat{\alpha}_N>0$,  then
   \begin{equation*}
   \mathbb{E}\left[e^{-r Z(n,N)}\right]=\int_{(\mathrm{i} \mathbb{R})^{n}}  r^{\sum_{i=1}^{n}\left({\alpha}_{i}-\lambda_{i}\right)} \prod_{1 \leq i, j \leq n} \Gamma\left(\lambda_{i}-{\alpha}_{j}\right)  \prod_{i=1}^{n}\prod_{j=1}^{N} \frac{\Gamma\left(\lambda_{i}+\widehat{\alpha}_{j}\right)}{\Gamma\left({\alpha}_{i}+\widehat{\alpha}_{j}\right)}s_{n}(\bsb{\lambda}) d \bsb{\lambda}.
\end{equation*} 
In the general case, note that ${\alpha}_i+\widehat{\alpha}_j=({\alpha}_i-\mu)+(\widehat{\alpha}_j+\mu)$. We may therefore apply the preceding formula  to $({\alpha}_i-\mu)_i$ and $(\widehat{\alpha}_j+\mu)_j$
\begin{equation*}
   \mathbb{E}\left[e^{-r Z(n,N)}\right]=\int_{(\mathrm{i} \mathbb{R})^{n}}  r^{\sum_{i=1}^{n}\left({\alpha}_{i}-\mu-\lambda_{i}\right)} \prod_{1 \leq i, j \leq n} \Gamma\left(\lambda_{i}+\mu-{\alpha}_{j}\right)   \prod_{i=1}^{n}\prod_{j=1}^{N} \frac{\Gamma\left(\lambda_{i}+\widehat{\alpha}_{j}+\mu\right)}{\Gamma\left({\alpha}_{i}+\widehat{\alpha}_{j}\right)}s_{n}(\bsb{\lambda}) d \bsb{\lambda}.
\end{equation*} 
The change of variable $\bsb{\lambda}\mapsto\bsb{\lambda}-\mu$ then completes the proof. 
\end{proof} 

\begin{proposition}\label{prop_4}
Theorem \ref{main} holds if $m\geq n-1$.
\end{proposition}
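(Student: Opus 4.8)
The plan is to establish the identity in law by comparing Laplace transforms: I would show that $\mathbb{E}\left[e^{-rZ^{\rflat}(n;m)}\right]=\mathbb{E}\left[e^{-rZ(n,n+m+1)}\right]$ for every $r>0$, and then invoke the uniqueness of the Laplace transform of a positive random variable to conclude $Z^{\rflat}(n;m)\overset{(d)}{=}Z(n,n+m+1)$. The left-hand side is handled by Corollary \ref{LapTransFla}, whose hypothesis $m\geqslant n-1$ is exactly the assumption of the present statement; this is where that assumption enters, since the square-integrability estimate behind Corollary \ref{LapTransFla} degenerates otherwise. The right-hand side will be handled by Proposition \ref{lapTrans}, the formula of \cite{COSZ} for general point-to-point partition functions.

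Concretely, I would first identify the parameters. The weight array \eqref{paramFull} defining $Z(n,n+m+1)$ is precisely the array \eqref{param_nv} with $N=n+m+1$ and $\widehat{\bsb\alpha}=(\widehat\alpha_1,\dots,\widehat\alpha_N):=\alpha_{\circ}\sqcup\bsb\alpha\sqcup\bsb\beta$, that is $\widehat\alpha_1=\alpha_{\circ}$, $\widehat\alpha_j=\alpha_{j-1}$ for $2\leqslant j\leqslant n+1$, and $\widehat\alpha_j=\beta_{j-n-1}$ for $n+2\leqslant j\leqslant N$. The positivity conditions $\alpha_i+\widehat\alpha_j>0$ required by Proposition \ref{lapTrans} are exactly $\alpha_i+\alpha_{\circ}>0$, $\alpha_i+\alpha_j>0$, $\alpha_i+\beta_k>0$ from Definition \ref{nota}, and $n\leqslant N$ since $m\geqslant 0$. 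I would then fix a single $\mu$ large enough, say $\mu>\max\{|\alpha_{\circ}|,|\alpha_1|,\dots,|\alpha_n|,|\beta_1|,\dots,|\beta_m|\}$, so that the common contour $(\mu+\mathrm i\mathbb R)^n$ meets simultaneously the hypothesis of Corollary \ref{LapTransFla} (which, via Proposition \ref{TransSO}, wants $\mu>\max_j|\alpha_j|$) and that of Proposition \ref{lapTrans} ($\mu>\max_{i,j}\{-\widehat\alpha_j,\alpha_i\}$). Using $\prod_{j=1}^{N}\frac{\Gamma(\lambda_i+\widehat\alpha_j)}{\Gamma(\alpha_i+\widehat\alpha_j)}=\frac{\Gamma(\lambda_i+\alpha_{\circ})}{\Gamma(\alpha_i+\alpha_{\circ})}\prod_{j=1}^{n}\frac{\Gamma(\lambda_i+\alpha_j)}{\Gamma(\alpha_i+\alpha_j)}\prod_{k=1}^{m}\frac{\Gamma(\lambda_i+\beta_k)}{\Gamma(\alpha_i+\beta_k)}$, the integrand produced by Proposition \ref{lapTrans} is term-for-term identical with the integrand of \eqref{eq2.5}, so the two contour integrals coincide and the desired equality of Laplace transforms follows.

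There is no genuine analytic obstacle, because both contour formulas are already proved; the only point that needs care is the bookkeeping — matching the index ranges of \eqref{paramFull} with the concatenation $\alpha_{\circ}\sqcup\bsb\alpha\sqcup\bsb\beta$, and checking that one choice of $\mu$ makes both formulas valid on the same vertical contour. No contour deformation or residue computation is needed here, since each of the two formulas is independent of $\mu$ within its admissible range (its left-hand side being $\mu$-free). In effect, Proposition \ref{prop_4} amounts to recognizing that the Whittaker-side computations of Corollary \ref{LapTransFla} and of \cite{COSZ} yield the very same contour integral; the genuinely new input is contained in the derivations of those two formulas rather than in their comparison.
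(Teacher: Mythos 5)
Your proof is correct and follows essentially the same route as the paper: apply Proposition \ref{lapTrans} with $N=n+m+1$ and $\widehat{\bsb\alpha}=\alpha_{\circ}\sqcup\bsb\alpha\sqcup\bsb\beta$, observe that the resulting contour integral coincides with the one in Corollary \ref{LapTransFla} (valid precisely because $m\geq n-1$), and conclude by uniqueness of the Laplace transform. Your extra care about choosing a common $\mu$ and checking the positivity conditions is sound, though not strictly needed since each formula is $\mu$-independent within its admissible range.
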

\begin{proof}
  We apply  Proposition \ref{lapTrans} to $Z(n,n+m+1)$, the point-to-point partition function of the log-gamma polymer with weight array (\ref{paramFull}).
  To this end, it suffices to take $N=1+n+m$ and let $\widehat{\bsb{\alpha}}$ be the concatenation of $(\alpha_0), \bsb{\alpha}, \bsb{\beta}$, i.e., $\widehat{\bsb{\alpha}}=(\alpha_0)\sqcup \bsb{\alpha}\sqcup \bsb{\beta}$. We thus obtain the Laplace transform of $Z(n,n+m+1)$, which is the same as the   Laplace transform of ${Z}^{\rflat}(n;m)$  given by Corollary \ref{LapTransFla} under the hypothesis $m\geq n-1$. This yields that the two partition functions  are identical in law.  
\end{proof}

To complete the proof for  Theorem \ref{main}, we only need to treat the $m<n-1$ case. 
  Let $\ell$   be any fixed integer larger or equal to   $n-m-1$. The strategy of proof is to reduce the problem to the previous $m\geq n-1$ case. This will be done in two steps.

In Step 1, 
we will show that $Z(n,n+m+1)=\sum_{ \pi:(1,1)\rightarrow(n,n+m+1)}\prod_{(i,j)\in\pi}W_{i,j}$ can be expressed as the scaled limit of 
a sequence of partition functions (denoted by $(Z_{k}(n,n+m+\ell+1))_{k\in\mathbb N}$) for  full-space log-gamma polymers whose weight array is constructed    by adding $\ell$ auxiliary  columns next to  $(W_{i,j})$.

In Step 2, we will show that $Z^{\rflat}(n;m)=\sum_{\substack{p+q=2n+m+1 \\ \pi:(1,1)\rightarrow(p,q)}}\prod_{(i,j)\in\pi}W_{i,j}$ can be expressed as the scaled limit of a sequence of   partition functions (denoted by $(Z^{\rflat}_{k}(n;m+\ell))_{k\in\mathbb N}$) of trapezoidal log-gamma polymers whose weight array is constructed  by inserting $\ell$ auxiliary columns into  $(W_{i,j})$.

These two steps will enable us to conclude. Indeed,    $Z^{\rflat}_{k}(n;m+\ell)\overset{(d)}{=}Z_{k}(n,n+m+\ell+1)$ holds   by  Proposition \ref{prop_4} as we have chosen $\ell\geq n-m-1$, and the desired identity in distribution  $Z^{\rflat}(n;m)\overset{(d)}{=} Z(n,n+m+1)$   follows by letting $k$ go to infinity.

 \begin{proof}[Proof of Theorem \ref{main}] 
 Let $\ell\geq n-m-1$ be a fixed integer.
 
\emph{Step 1.}
Let  $(w_{i,j}^{k}:1\leq i\leq n, n+m+1< j\leq n+m+\ell +1,k>0)$ be a family of  random variables whose distribution is given by $w_{i,j}^k\sim \operatorname{Gamma}^{-1}(\alpha_i+k)$ for each triple $k,i,j$. 
We require that $(w^k_{i,j})$ be such that  $(W_{i,j})\cup(w^k_{i,j})$ is a  family of  independent random variables.
Define $(W_{i,j}^{k})$   by (see Figure \ref{img_aug1_1})
\begin{equation}\label{eq79}
\text{for $1\leq i\leq n$ and $k\geq 1$,}\quad 
    W_{i,j}^{k}:=\left\{
    \begin{array}{ll}
        W_{i,j} &\text{if} \quad 1\leq j\leq n+m+1,  \\
        w_{i,j}^{k} &\text{if}\quad  n+m+1<j\leq n+m+\ell+1.
    \end{array}
    \right.
\end{equation}
Let 
$$Z_k(n,n+m+\ell+1):=\sum_{ \pi:(1,1)\rightarrow(n,n+m+\ell+1)}\prod_{(i,j)\in\pi}W_{i,j}^k$$
be the corresponding point-to-point partition function on the augmented  log-gamma polymer weight array \eqref{eq79}.
 Observe that for any up-right  path $\pi$ from $(1,1)$ to $(n,n+m+\ell+1)$, the associated product $\prod_{(i,j)\in\pi}W_{i,j}^k $ can be decomposed into two parts
\begin{equation}
    \label{eq91}
\prod_{(i,j)\in\pi}W_{i,j}^k = \prod_{\substack{(i,j)\in\pi \\ j\leq n+m+1}} W_{i,j}
 \prod_{\substack{(i,j)\in\pi \\  j>n+m+1}} w^k_{i,j}  .
\end{equation} 
The number of factors in the product  $ \prod_{\substack{(i,j)\in\pi \\  j>n+m+1}} w^k_{i,j} $ is at least $\ell$, i.e., $$\left|\{(i,j)\in\pi\mid j>n+m+1\}\right|\geq \ell.$$
Observe that $\left|\{(i,j)\in\pi\mid j>n+m+1\}\right|= \ell$  if and only if $(n,n+m+1)\in\pi$.  Let $\Pi_1$ denote the collection of all  up-right  paths $\pi$ that pass by $(n,n+m+1)$ (see Figure \ref{img_aug1_2})
\[\Pi_1:=\{\pi\mid (n,n+m+1)\in\pi\},\]
that is, $\pi\in\Pi_1$ if and only if $\left|\{(i,j)\in\pi\mid j>n+m+1\}\right|= \ell$.

Now we show that as $k$ tends to infinity, $\sum_{\pi\in\Pi_1}\prod_{(i,j)\in\pi}W^k_{i,j}$ dominates $\sum_{\pi\notin\Pi_1}\prod_{(i,j)\in\pi}W^k_{i,j}$. 
For an inverse-gamma random variable  $X\sim \operatorname{Gamma}^{-1}( k+a)$, we have 
\[\mathbb{E}[k X]=\frac{k}{k+a-1} =1+O\left({k}^{-1}\right)
\,
\text{ and } \,
\operatorname{Var}[k X]=\left(\frac{k}{k+a-1}\right)^2\frac{1}{k+a-2}=O\left({k}^{-1}\right) \text{ as } k\rightarrow\infty.
\]
Hence for all $i,j$, $\lim_{k\rightarrow\infty} kw_{i,j}^k=1$   holds  in $L^2$.  Using \eqref{eq91},  we have    the following convergence in $L^2$
\begin{align*}
\lim_{k\rightarrow\infty}k^{\ell}
\prod_{(i,j)\in\pi}W_{i,j}^k
&= \prod_{\substack{(i,j)\in\pi \\ j\leq n+m+1}} W_{i,j}  
\lim_{k\rightarrow\infty} k^{\ell-\left|\{(i,j)\in\pi\mid j>n+m+1\}\right|} 
 \prod_{\substack{(i,j)\in\pi \\  j>n+m+1}} kw^k_{i,j}  \\
&=
\left\{
\begin{aligned}
    \prod_{\substack{(i,j)\in\pi \\ j\leq n+m+1}} &W_{i,j} & \text{ if } \pi\in\Pi_1,   \\
   & 0     & \text{ if } \pi\notin\Pi_1.
\end{aligned}
\right. 
\end{align*} 
It follows that 
\begin{equation}\label{eq92}
\lim_{k\rightarrow\infty} 
k^\ell Z_k(n,n+m+\ell+1)
=\sum_{  \pi\in\Pi_1}\prod_{\substack{(i,j)\in\pi \\ j\leq n+m+1}}W_{i,j} \quad \text{in $L^2$.}
\end{equation} 
 Since $\pi\in\Pi_1$ if and only if $(n,n+m+1)\in\pi$, the RHS of \eqref{eq92} is in fact equal to $Z(n,n+m+1)$
\begin{equation}\label{eq86}
\lim_{k\rightarrow\infty} 
k^\ell Z_k(n,n+m+\ell+1)
=Z(n,n+m+1)\quad \text{in $L^2$.}
\end{equation}
 
 \begin{figure}

     \centering

\begin{subfigure}{.48\textwidth}
\begin{center} 
 
     \begin{tikzpicture}[scale=0.75]

	 \draw (-0.5, -0.4) node{{\footnotesize $(1,1)$}};
	 \draw (8, 3.5) node{{\footnotesize $(n,n+m+\ell+1)$}};
	 
		\draw[->] (-0.3,1.5) node[anchor=east]{$W_{i,j}$} to[bend left] (1.2,1.5);  
		  
		\draw[->] (7.5,1.5) node[anchor=west]{$w^k_{i,j}$} to[bend left] (6.5,1.5);  
		  
 \coordinate (1) at (0,0);
 \coordinate (2) at (1,0);
 \coordinate (3) at (2,0);
 \coordinate (4) at (3,0);
 \coordinate (5) at (0,1);
 \coordinate (6) at (1,1);
 \coordinate (7) at (2,1);
 \coordinate (8) at (3,1);
 \coordinate (9) at (0,2);
 \coordinate (10) at (1,2);
 \coordinate (11) at (2,2);
 \coordinate (12) at (3,2);
 \coordinate (13) at (0,3);
 \coordinate (14) at (1,3);
 \coordinate (15) at (2,3);
 \coordinate (16) at (3,3);
 
 \coordinate (17) at (4,0);
 \coordinate (18) at (4,1);
 \coordinate (19) at (4,2);
 \coordinate (20) at (4,3);
 \foreach \n in {1,5,9,13} \fill [green] (\n)
   circle (2pt) node [below] {};  
  \foreach \n in {2,3,4,6,7,8,10,11,12,14,15,16,17,18,19,20} \fill [red] (\n) circle (2pt) node [below] {};

 \coordinate (1) at (5,0);
 \coordinate (2) at (5,1);
 \coordinate (3) at (5,2);
 \coordinate (4) at (5,3); 
 \foreach \n in {1,2,3,4}  \fill [blue] (\n) circle (2pt) node [below] {};

 \coordinate (1) at (7,0);
 \coordinate (2) at (7,1);
 \coordinate (3) at (7,2);
 \coordinate (4) at (7,3);
 \coordinate (5) at (6,0);
 \coordinate (6) at (6,1);
 \coordinate (7) at (6,2);
 \coordinate (8) at (6,3); 
 \foreach \n in {1,2,3,4,5,6,7,8} \draw (\n) circle(2pt);
    
		\end{tikzpicture}
		
	\caption{}
	\label{img_aug1_1}
	 \end{center}
 \end{subfigure} 
 \hfill
\begin{subfigure}{.45\textwidth}
\begin{center} 
 
     \begin{tikzpicture}[scale=0.75] 
      
	 \fill [white] (-0.5, -0.4) circle (2pt) node [below] {};  
	 \fill [white] (8,3.8) circle (2pt) node [below] {};  
\draw[thick] (0,0)--(0,1)--(1,1)--(1,2)--(2,2)--(5,2)--(5,3)--(7,3);

\draw[thick][dashed] (0,0)--(2,0)--(2,1)--(6,1)--(7,1)--(7,2)--(7,3);		 

 \coordinate (1) at (0,0);
 \coordinate (2) at (1,0);
 \coordinate (3) at (2,0);
 \coordinate (4) at (3,0);
 \coordinate (5) at (0,1);
 \coordinate (6) at (1,1);
 \coordinate (7) at (2,1);
 \coordinate (8) at (3,1);
 \coordinate (9) at (0,2);
 \coordinate (10) at (1,2);
 \coordinate (11) at (2,2);
 \coordinate (12) at (3,2);
 \coordinate (13) at (0,3);
 \coordinate (14) at (1,3);
 \coordinate (15) at (2,3);
 \coordinate (16) at (3,3);
 
 \coordinate (17) at (4,0);
 \coordinate (18) at (4,1);
 \coordinate (19) at (4,2);
 \coordinate (20) at (4,3);
 \foreach \n in {1,5,9,13} \fill [green] (\n)
   circle (2pt) node [below] {};  
  \foreach \n in {2,3,4,6,7,8,10,11,12,14,15,16,17,18,19,20} \fill [red] (\n) circle (2pt) node [below] {};

 \coordinate (1) at (5,0);
 \coordinate (2) at (5,1);
 \coordinate (3) at (5,2);
 \coordinate (4) at (5,3); 
 \foreach \n in {1,2,3,4}  \fill [blue] (\n) circle (2pt) node [below] {};

 \coordinate (1) at (7,0);
 \coordinate (2) at (7,1);
 \coordinate (3) at (7,2);
 \coordinate (4) at (7,3);
 \coordinate (5) at (6,0);
 \coordinate (6) at (6,1);
 \coordinate (7) at (6,2);
 \coordinate (8) at (6,3); 
 \foreach \n in {1,2,3,4,5,6,7,8} \draw (\n) circle(2pt);
    
		\end{tikzpicture}		
	\caption{}
	\label{img_aug1_2}
	\end{center}
 \end{subfigure} 
 \label{img_aug1}
	\caption[]{Example of weights $(W_{i,j}^k)$ and  paths considered in Step 1, in the case  $n=4,m=1,\ell=2$. (A) The weight array $(W_{i,j}^k)$ is obtained by adding $\ell$ columns of weight $(w^k_{i,j})$ (represented by $\personalcirc$) to the right of $(W_{i,j})$. (B) A path in $\Pi_1$ (the solid line) and a path not in $\Pi_1$ (the dashed line).}
 \end{figure}

 \emph{Step 2.}
Similarly to Step 1, let $(w_{i,j}^{k})_{
\substack{1\leq i\leq n \\ n+m< j\leq n+m+\ell \\ k>0}}$ be a family of independent random variables that is independent of $(W_{i,j})$, whose distribution is given by $w_{i,j}^k\sim \operatorname{Gamma}^{-1}(\alpha_i+k)$ for each triple of $k,i,j$.
Define $(W_{i,j}^{k})$  by (see Figure \ref{img_aug2_1})
\begin{equation}\label{eq80}
\text{for $1\leq i\leq n$ and $k\geq 1$,}\quad 
    W_{i,j}^{k}:=\left\{
    \begin{array}{ll}
        W_{i,j} &\text{if} \quad i\leq j\leq n+m,  \\
        w_{i,j}^{k} &\text{if}\quad  n+m<j\leq n+m+\ell, \\
        W_{i,j-\ell} & \text{if}\quad  n+m +\ell<j\leq 2n+m+\ell-i+1.
    \end{array}
    \right.
\end{equation}
Let 
$$Z^{\rflat}_k(n;m+\ell):=\sum_{\substack{ \pi:(1,1)\rightarrow(p,2n+m+\ell-p+1) \\ 1\leq p\leq n }}\prod_{(i,j)\in\pi}W_{i,j}^k$$
be the corresponding point-to-line partition function on the augmented trapezoidal log-gamma polymer with weight array \eqref{eq80}.

For any up-right path $\pi:(1,1)\rightarrow (p,2n+m+\ell-p+1)$, the associated product $\prod_{(i,j)\in\pi}W_{i,j}^k $ can be decomposed into three parts
\begin{equation}\label{eq93}
\prod_{\substack{(i,j)\in\pi}}W_{i,j}^k = \prod_{\substack{(i,j)\in\pi \\j\leq n+m}} W_{i,j} \prod_{\substack{(i,j)\in\pi \\n+m<j\leq n+m+\ell}} w^k_{i,j}   \prod_{\substack{(i,j)\in\pi \\ j>n+m+\ell}} W_{i,j-\ell}.
\end{equation}
We see that   the number of factors in the product  $\prod_{\substack{(i,j)\in\pi \\n+m<j\leq n+m+\ell}} w^k_{i,j}$ is at least $\ell$, i.e.,
$$
|\{(i,j)\in\pi\mid n+m<j\leq n+m+\ell\}|\geq \ell .
$$
The preceding inequality becomes an equality  if and only if $\pi$ stays horizontal on the columns $j=n+m+1,\dots,j=n+m+\ell$. To be precise, this means   there exists an integer  $i$ such that $(i,n+m)\in\pi$ and $(i,n+m+\ell+1)\in\pi$.  Let $\Pi_2$ denote the collection of all such paths $\pi$  (see Figure \ref{img_aug2_2}).

As explained in Step 1, for all $i,j$, $\lim_{k\rightarrow\infty} kw_{i,j}^k=1$   in $L^2$. 
Thus, for   $\pi\notin\Pi_2$ we have 
$$
\lim_{k\rightarrow\infty}k^{\ell}
\prod_{(i,j)\in\pi}W_{i,j}^k
= \prod_{\substack{(i,j)\in\pi \\j\leq n+m}} W_{i,j} \left(\lim_{k\rightarrow\infty}k^{\ell}\prod_{\substack{(i,j)\in\pi \\n+m<j\leq n+m+\ell}} w^k_{i,j}   
\right)
\prod_{\substack{(i,j)\in\pi \\ j>n+m+\ell}} W_{i,j-\ell}=0
\text{ in $L^2$,} $$
where we used \eqref{eq93} and $|\{(i,j)\in\pi\mid n+m<j\leq n+m+\ell\}|> \ell $.
And  for  $\pi\in\Pi_2$ we have $$\lim_{k\rightarrow\infty}k^{\ell}
\prod_{(i,j)\in\pi}W_{i,j}^k=\prod_{\substack{(i,j)\in\pi \\ j\leq n+m}}W_{i,j}\prod_{\substack{(i,j)\in\pi \\ j> n+m+\ell}}W_{i,j-\ell}\quad \text{in $L^2$} $$
since $|\{(i,j)\in\pi\mid n+m<j\leq n+m+\ell\}|= \ell $ holds for $\pi\in\Pi_2$.
It follows that 
\begin{equation}\label{eq81}
\lim_{k\rightarrow\infty} 
k^\ell Z^{\rflat}_k(n;m+\ell)
=\sum_{  \pi\in\Pi}\prod_{\substack{(i,j)\in\pi \\ j\leq n+m}}W_{i,j}\prod_{\substack{(i,j)\in\pi \\ j> n+m+\ell}}W_{i,j-\ell}\quad \text{in $L^2$.}
\end{equation}
We claim that the RHS of \eqref{eq81} is   $Z^{\rflat}(n;m)$. To see this, let  $\Pi^{\prime}_2$ be the set of all up-right paths from $(1,1)$ to $(p,2n+m-p+1)$ for some $1\leq p\leq n$. Then there is a bijetion between $\Pi_2$ and $\Pi^{\prime}_2$. Indeed, since   every  path   $\pi\in\Pi_2$ is   horizontal on the columns carrying weights $w_{i,j}$,  we can remove  the horizontal part from $\pi$ and obtain a path in $\Pi^{\prime}_2$. Inversely, for $\pi^{\prime}\in\Pi^{\prime}_2$ we add a horizontal part to it to obtain a path in $\Pi_2$.
To be precise, we construct this bijection as follows. For $\pi\in\Pi_2$, let 
\begin{equation}\label{eq77}
    \pi^{\prime}:=\{(i,j)\mid j\leq n+m, (i,j)\in \pi\}\cup\{(i,j-\ell)\mid j> n+m+\ell, (i,j)\in\pi\}.
\end{equation}
Then $\pi^{\prime}\in\Pi^{\prime}_2$.
Inversely, for $\pi^{\prime}\in\Pi^{\prime}_2$, let 
$i_0$ be such that $(i_0,n+m)\in\pi^{\prime}$ and $(i_0,n+m+1)\in \pi^{\prime}$. Let 
$$\pi:=\{(i,j)\mid (i,j)\in\pi^{\prime}, j\leq n+m\}\cup \{(i_0,j)\mid n+m<j\leq n+m+\ell\}\cup \{(i,j+\ell)\mid (i,j)\in\pi^{\prime},j>n+m\}.$$ 
 Then $\pi \in\Pi_2 $. Using this bijection, for $\pi \in\Pi_2 $, let $\pi^{\prime}\in\Pi^{\prime}_2$ be  constructed as in (\ref{eq77}), we have 
 $$ \prod_{\substack{(i,j)\in\pi \\ j\leq n+m}}W_{i,j}\prod_{\substack{(i,j)\in\pi \\ j> n+m+\ell}}W_{i,j-\ell} = \prod_{(i,j)\in\pi^{\prime}}W_{i,j}.
 $$
 Inserting this into (\ref{eq81}), 
 \begin{equation}\label{eq85}
\lim_{k\rightarrow\infty} 
k^\ell Z^{\rflat}_k(n;m+\ell) 
 =\sum_{\pi^{\prime}\in\Pi^{\prime}_2} \prod_{(i,j)\in\pi^{\prime}}W_{i,j}
 =Z^{\rflat}(n;m) \quad \text{in $L^2$,} 
 \end{equation}
 as claimed.
 
 \begin{figure}

\begin{subfigure}{.46\textwidth}
\begin{center} 
 
     \begin{tikzpicture}[scale=0.65] 
	 
		\draw[->] (1,1.5) node[anchor=east]{$W_{i,j}$} to[bend left] (2.2,1.5);  
		  
		\draw[->] (7.3,2.6) node[anchor=west]{$w^k_{i,j}$} to[bend right] (5.5,2.3);  
		
		\draw[->] (8.8,1.2) node[anchor=south]{$W_{i,j}$} to[bend left] (7.5,0.5);  
		  
 \coordinate (1) at (0,0);
 \coordinate (2) at (1,0);
 \coordinate (3) at (2,0);
 \coordinate (4) at (3,0);
 \coordinate (5) at (1,1); 
 \coordinate (7) at (2,1);
 \coordinate (8) at (3,1);
 \coordinate (9) at (2,2); 
 \coordinate (12) at (3,2);
 \coordinate (13) at (3,3);  
 
 \coordinate (17) at (4,0);
 \coordinate (18) at (4,1);
 \coordinate (19) at (4,2);
 \coordinate (20) at (4,3);
 \foreach \n in {1,5,9,13} \fill [green] (\n)
   circle (2pt) node [below] {};  
  \foreach \n in {2,3,4,7,8,12} \fill [red] (\n) circle (2pt) node [below] {};   
 \foreach \n in {17,18,19,20}  \fill [blue] (\n) circle (2pt) node [below] {};

 \coordinate (1) at (5,0);
 \coordinate (2) at (5,1);
 \coordinate (3) at (5,2);
 \coordinate (4) at (5,3);
 \coordinate (5) at (6,0);
 \coordinate (6) at (6,1);
 \coordinate (7) at (6,2);
 \coordinate (8) at (6,3); 
 \foreach \n in {1,2,3,4,5,6,7,8} \draw (\n) circle(2pt);
   
 \coordinate (1) at (7,0);
 \coordinate (2) at (7,1);
 \coordinate (3) at (7,2);
 \coordinate (4) at (7,3);
 \coordinate (5) at (8,0);
 \coordinate (6) at (8,1); 
 \coordinate (7) at (8,2); 
 \coordinate (8) at (9,0); 
 \coordinate (9) at (9,1); 
 \coordinate (10) at (10,0); 
 \foreach \n in {1,2,3,4,5,6,7,8,9,10}  \fill [red] (\n)  circle (2pt) node [below] {};  
    
		\end{tikzpicture}
		
	\caption{}
	
 \label{img_aug2_1}
	 \end{center}
	 
 \end{subfigure} 
 \hfill
\begin{subfigure}{.46\textwidth}
\begin{center} 
 
     \begin{tikzpicture}[scale=0.65] 
\draw[thick] (0,0)--(4,0)--(4,1)--(8,1)--(8,2);

\draw[thick][dashed] (0,0)--(1,0)--(1,1)--(3,1)--(3,2)--(5,2)--(5,3)--(7,3);
		  
 \coordinate (1) at (0,0);
 \coordinate (2) at (1,0);
 \coordinate (3) at (2,0);
 \coordinate (4) at (3,0);
 \coordinate (5) at (1,1); 
 \coordinate (7) at (2,1);
 \coordinate (8) at (3,1);
 \coordinate (9) at (2,2); 
 \coordinate (12) at (3,2);
 \coordinate (13) at (3,3);  
 
 \coordinate (17) at (4,0);
 \coordinate (18) at (4,1);
 \coordinate (19) at (4,2);
 \coordinate (20) at (4,3);
 \foreach \n in {1,5,9,13} \fill [green] (\n)
   circle (2pt) node [below] {};  
  \foreach \n in {2,3,4,7,8,12} \fill [red] (\n) circle (2pt) node [below] {};   
 \foreach \n in {17,18,19,20}  \fill [blue] (\n) circle (2pt) node [below] {};

 \coordinate (1) at (5,0);
 \coordinate (2) at (5,1);
 \coordinate (3) at (5,2);
 \coordinate (4) at (5,3);
 \coordinate (5) at (6,0);
 \coordinate (6) at (6,1);
 \coordinate (7) at (6,2);
 \coordinate (8) at (6,3); 
 \foreach \n in {1,2,3,4,5,6,7,8} \draw (\n) circle(2pt);
   
 \coordinate (1) at (7,0);
 \coordinate (2) at (7,1);
 \coordinate (3) at (7,2);
 \coordinate (4) at (7,3);
 \coordinate (5) at (8,0);
 \coordinate (6) at (8,1); 
 \coordinate (7) at (8,2); 
 \coordinate (8) at (9,0); 
 \coordinate (9) at (9,1); 
 \coordinate (10) at (10,0); 
 \foreach \n in {1,2,3,4,5,6,7,8,9,10}  \fill [red] (\n)  circle (2pt) node [below] {};  
    
		\end{tikzpicture}		
	\caption{}
 \label{img_aug2_2}
	\end{center}
	
 \end{subfigure} 
 \label{img_aug2}
	\caption[]{Example for weights $(W^k_{i,j})$ and paths considered in Step 2, in the case $n=4,m=1,\ell=2$. (A) The weight array $(W^k_{i,j})$ is obtained by inserting $\ell$ columns (represented by 
	\begin{tikzpicture}[scale=0.65]
	 \draw   circle(2pt);
	\end{tikzpicture}) into $(W_{i,j})$. (B) A path in $\Pi_2$ (the solid line) and a     path not in $\Pi_2$ (the dashed line). }
 \end{figure}

 To conclude, observe that by setting $ \widetilde{\bsb{\beta}}=\bsb{\beta}\sqcup(k,\dots,k )$, where $k$ repeats $\ell$ times, the weight array (\ref{paramFull}) with parameters $\alpha_{\circ},\bsb{\alpha},\widetilde{\bsb{\beta}}$ is identical to (\ref{eq79}).  And the weight array (\ref{paramHal}) with parameters $\alpha_{\circ},\bsb{\alpha},\widetilde{\bsb{\beta}}$ is identical to (\ref{eq80}). Since the length of $\widetilde{\bsb{\beta}}$ is $m+\ell$, which is at least $ n-1$,  by Proposition \ref{prop_4} we have $Z_{k}(n,n+m+\ell+1)\overset{(d)}{=}Z^{\rflat}_k(n;m+\ell)$   for all $k$.  
 Using (\ref{eq86})  and (\ref{eq85}), this implies 
 $Z(n,n+m+1)\overset{(d)}{=}Z^{\rflat}(n;m)$, which completes the proof.
 
 \end{proof}

   \section{Asymptotics  results for trapezoidal log-gamma polymers}\label{sec:asymptotics}
   
  In this section, we consider  homogeneous trapezoidal log-gamma polymers with boundary perturbations and prove a phase transition as the boundary parameter varies.

\begin{definition}
The GUE Tracy-Widom distribution function \cite{TW94} is given by 
\begin{equation}\label{eq:defGUE}
    F_{\mathrm{GUE}}(t):=\operatorname{det}\left(\mathbbm{1}+{K}\right)_{L^{2}\left(\mathcal{C}\right)},
\end{equation}
where the RHS is  a  Fredholm determinant and the operator $K$ is given by its integral kernel, also denoted by $K$,
\[{K} \left(v, v^{\prime}\right)=\frac{1}{2 \pi \mathrm{i}} \int_{\mathcal{D}} \frac{e^{w^{3} / 3-v^{3} / 3} e^{t v-t w}}{(v-w)\left(w-v^{\prime}\right)}  dw .\]
The contour $\mathcal{C}$ goes from $e^{
5\pi \mathrm{i} /4}\infty $ to $ e^{3\pi \mathrm{i} /4}\infty $.  The contour $\mathcal{D}$ goes from $e^{-\pi \mathrm{i} / 4} \infty$ to $e^{\pi \mathrm{i} / 4} \infty$ and lies to the right of the contour $\mathcal{C}$, i.e., the two contours do not intersect.

For $\bsb{b}\in\mathbb{R}^m$, we define the Baik-Ben Arous-P\'ech\'e distribution with parameter $\bsb{b}$ \cite{BBP05} via   
\begin{equation}\label{eq:defBBP}
F_{\mathrm{BBP},\bsb{b}}(t):=\operatorname{det}\left(\mathbbm{1}+{K}_{\bsb{b}}\right)_{L^{2}\left(\mathcal{C}\right)},
\end{equation}
where 
\[{K}_{ \bsb{b}}\left(v, v^{\prime}\right)=\frac{1}{2 \pi \mathrm{i}} \int_{\mathcal{D}} \frac{e^{w^{3} / 3-v^{3} / 3} e^{t v-t w}}{(v-w)\left(w-v^{\prime}\right)} \prod_{k=1}^{m} \frac{w-b_{k}}{v-b_{k}}dw .\]
The contours $\mathcal{C}$ and $\mathcal{D}$ are as above, but we require furthermore that $\mathcal{C}$ lie to the right of all the $b_i$'s. 

\end{definition}

Recall from Definition \ref{notation2} that $Z(n,m)$ is the point-to-point partition function for the log-gamma polymer with weight array  \eqref{paramGUE} and  $Z^{\rflat}(n;m-n-1)$ is the point-to-line partition function for the log-gamma polymer with weight array \eqref{paramTrapGUE}. The two partition functions have the same distribution due to Theorem \ref{main}.
Therefore, to prove  Theorem \ref{main2}, which states the convergence in law of the  free energy $\log Z^{\rflat}(n;m-n-1)$ appropriately scaled, we  may prove the corresponding convergence in law of   $\log Z(n,m)$ under the same scaling.
 
 In \cite{BCD}, the authors performed an asymptotic analysis of a Fredholm determinant formula from \cite{borodin2015height} for the Laplace transform of $Z(n,m)$, and obtained limiting distribution for the free energy $\log Z(n,m)$. We record the result of \cite{BCD} as Proposition \ref{asymptoticsGUE} below. It immediately yields part (2) of Theorem \ref{main2}. Using a coupling argument, we will deduce part (1) of Theorem \ref{main2} as well. 
 We then  proceed to prove  the asymptotics in the Gaussian phase of Theorem \ref{main2}   with a probabilistic argument, relying on Sepp{\"a}l{\"a}inen's fluctuation estimates on partition functions for stationary log-gamma polymers   \cite{Seppa}.

\subsection{Proof of asymptotics in the GUE Tracy-Widom and BBP phases} 

\begin{proposition}[{\cite[Theorem 1.2,\,Theorem 1.7]{BCD}}]\label{asymptoticsGUE}

Assume parameters $\theta,\theta_0,n,m,p$ and the partition function $Z(n,m)$ are as in Definition \ref{notation2}.   
\begin{itemize}
    \item  If $\theta_0=\theta$,  then   for all $t$,
    \begin{equation}\label{eq:gue}
       \lim_{n\rightarrow\infty} \mathbb{P}\left( \frac{\log Z(n,m)-nf_{\theta,p}}{n^{1 / 3} \sigma_{\theta,p}} \leq t\right)= F_{\mathrm{GUE}}(t).
    \end{equation} 
    \item  If for some $y\in\mathbb{R}$, $\lim_{n\rightarrow\infty} n^{1/3}(\theta_0-\theta_c)\sigma_{\theta,p}=y  $,  then for all $t$,
    \begin{equation} \label{eq:bbp}
       \lim_{n\rightarrow\infty} \mathbb{P}\left( \frac{\log Z(n,m)-nf_{\theta,p}}{n^{1 / 3} \sigma_{\theta,p}}  \leq t\right)= F_{\mathrm{BBP} ; -y}(t).
    \end{equation} 
     
\end{itemize}
\end{proposition}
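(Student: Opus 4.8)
The proposition is a transcription of \cite[Theorems 1.2 and 1.7]{BCD} into the notation of Definitions \ref{notation2} and \ref{notation3}, so the plan is to reconcile the two frameworks rather than to redo the asymptotic analysis, which is carried out in \cite{BCD}. There one studies the free energy of the log-gamma polymer on an $n\times m$ rectangle with weights $W_{i,j}^{-1}\sim\operatorname{Gamma}(a_i+b_j)$, allowing the slope $m/n$ to lie in a compact subinterval of the admissible directions and allowing finitely many of the $a_i,b_j$ to be perturbed away from a homogeneous value; the limiting statements are phrased through the critical point of an explicit action built from digamma functions, together with a law-of-large-numbers constant and an $n^{1/3}$ fluctuation scale read off from its derivatives at that point. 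The first step is to specialize this to the array \eqref{paramGUE}, i.e.\ (after an irrelevant shift) to take $b_j\equiv\theta/2$ for all $j$, $a_i=\theta/2$ for $i\ge 2$ and $a_1=\theta_0-\theta/2$ — equivalently, a homogeneous bulk of parameter $\theta$ with a single perturbation governed by $\theta_0$, located on the first row after transposing the rectangle. By Definition \ref{notation2} the slope $p=m/n$ stays in a compact subset of $(0,\infty)$, and since the log-gamma polymer has no flat-edge phenomenon every such direction is admissible, so the hypotheses of \cite[Theorems 1.2, 1.7]{BCD} are satisfied.

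The second step is to identify the constants. One checks that the critical point of the homogeneous action in the direction of slope $p$ is the solution $\theta_c$ of $\psi'(\theta_c)-p\psi'(\theta-\theta_c)=0$ from Definition \ref{notation3}, that the associated law-of-large-numbers constant is $f_{\theta,p}=-\psi(\theta_c)-p\psi(\theta-\theta_c)$, and that the fluctuation scale produced by the action is $\sigma_{\theta,p}=\bigl((-\psi''(\theta_c)-p\psi''(\theta-\theta_c))/2\bigr)^{1/3}$; this amounts to computing the first few derivatives of $s\mapsto-\psi(s)-p\psi(\theta-s)$ at $s=\theta_c$ and matching them with the quantities appearing in \cite{BCD}. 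For \eqref{eq:gue} one sets $\theta_0=\theta$, so that the polymer is genuinely homogeneous and \cite[Theorem 1.2]{BCD} applies directly. For \eqref{eq:bbp}, the single perturbation places us in the setting of \cite[Theorem 1.7]{BCD}; the parameter of the limiting BBP kernel is the rescaled displacement of $\theta_0$ from $\theta_c$, namely $\lim_n n^{1/3}(\theta_0-\theta_c)\sigma_{\theta,p}=y$ (using the hypothesis $\lim_n n^{1/3}(\theta_0-\theta_c)=y\sigma_{\theta,p}^{-1}$), which with the conventions of \eqref{eq:defBBP} yields $F_{\mathrm{BBP};-y}$. One should also note that the Fredholm determinant representations \eqref{eq:defGUE}–\eqref{eq:defBBP} coincide with those used in \cite{BCD} up to a standard contour deformation in the kernel.

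The only real difficulty is bookkeeping rather than analysis. First, one must make sure that the version of \cite[Theorems 1.2, 1.7]{BCD} being invoked is uniform enough to let the slope $p$ — and hence $\theta_c$, $f_{\theta,p}$, $\sigma_{\theta,p}$ — vary with $n$ as permitted by Definition \ref{notation2}. Second, one must track the sign and normalization conventions attached to the BBP perturbation parameter in \cite{BCD} (direction of the contours, which end of the perturbation window is subcritical) so that the rescaled displacement lands exactly on the $-y$ in \eqref{eq:bbp}. Once these points are pinned down, \eqref{eq:gue} and \eqref{eq:bbp} follow immediately from \cite[Theorems 1.2 and 1.7]{BCD}.
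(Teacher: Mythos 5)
Your proposal is correct and takes essentially the same route as the paper: the paper offers no proof of this proposition, recording it as a direct citation of \cite[Theorems 1.2 and 1.7]{BCD}, so the only content of a ``proof'' is precisely the specialization of the inhomogeneous parameters to \eqref{paramGUE} (one perturbed column, homogeneous bulk) and the matching of $\theta_c$, $f_{\theta,p}$, $\sigma_{\theta,p}$ and the BBP sign convention that you carry out. The uniformity-in-$p$ caveat you raise (since $p=m/n$ may vary with $n$ under Definition \ref{notation2}) is likewise left implicit in the paper, so flagging it is reasonable but does not mark a divergence from the paper's argument.
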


  First observe that in light of Theorem \ref{main},   the  part (2) of Theorem \ref{main2} is implied by \eqref{eq:bbp} of Proposition \ref{asymptoticsGUE}.
  
  Now we   turn to the proof of  the asymptotics in the  GUE Tracy-Widom phase, i.e. the part (1) of Theorem \ref{main2}. Recall that the assumption here is  $\liminf_{n\rightarrow\infty} n^{1/3}(\theta_0-\theta_c)=+\infty $.  We will exhibit   an upper bound and a lower bound for $Z(n,m)$ and then show that in the $n\rightarrow \infty $ limit,  these bounds  match and  yield the  limiting behavior for $Z(n,m)$.  Thus the same limiting behavior follows for $Z^{\rflat}(n;n-m-1)$ by Theorem \ref{main}.

\subsubsection*{Upper bound}
Let  $U>0$ be a positive real number.
By the hypothesis $\limsup_{n\rightarrow\infty}n^{1/3}(\theta_0-\theta_c)=+\infty$  , for some sufficiently large $n_0$, it holds that $\theta_0-\theta_c-Un^{-1/3}>0$   for all $n>n_0$.
Now, for $n>n_0$, let  $\widetilde{Z}(n,m)$ denote  the   point-to-point partition function with   weights $(\widetilde{W}_{i,j})$ defined on the same probability space as the weights  $(W_{i,j})$, and satisfying 
\begin{equation*}
W_{i,1}\leq \widetilde{W}_{i,1}\sim \operatorname{Gamma}^{-1}(\theta_c+U n^{-1/3})\quad\text{and}\quad \widetilde{W}_{i,j}=W_{i,j} \text{ if } j\geq 2.
\end{equation*}
We may indeed construct such weights $(\widetilde{W}_{i,j})$. We first take    random variables   
$$X_i\sim\operatorname{Gamma}^{-1}(\theta_0-\theta_c-Un^{-1/3}), \quad 1\leq i\leq n,$$
in such a way that all the $X_i$'s and the $W_{i,j}$'s  are independent. Then define
$${\widetilde{W}_{i,1} }:=\left( \frac{1}{W_{i,1}}-\frac{1}{X_i}\right)^{-1} \quad \text{for } 1\leq i\leq n.$$
Since $W_{i,1}\sim\operatorname{Gamma}^{-1}(\theta_0)$, we have  $\widetilde{W}_{i,1}\sim\operatorname{Gamma}^{-1}(\theta_c+Un^{-1/3})$ and by construction $W_{i,1}\leq \widetilde{W}_{i,1}$ for all $1\leq i\leq n$.
It follows that $\widetilde{Z}(n,m)$ is an upper bound for $Z(n,m)$
\begin{equation} \label{eq74}
Z(n,m)\leq \widetilde{Z}(n,m).
\end{equation} 
Furthermore,  \eqref{eq:bbp}   yields the limiting behavior of  $\widetilde{Z}(n,m)$ 
    \begin{equation}\label{eq75}
       \lim_{n\rightarrow\infty} \mathbb{P}\left( \frac{\log \widetilde{Z}(n,m) -nf_{\theta,p}}{n^{1 / 3} \sigma_{\theta,p}} \leq t\right)= F_{\mathrm{BBP} ; -U \lim _{n} \sigma_{\theta, p}}(t).
    \end{equation} 
    By  (\ref{eq74}) and (\ref{eq75}), 
\begin{align*}
    \liminf_{n\rightarrow\infty}{} &\mathbb{P}\left( \frac{\log {Z}(n,m) -nf_{\theta,p} }{n^{1 / 3} \sigma_{\theta,p}} \leq t\right)\geq
    F_{\mathrm{BBP} ; -U}(t) .
    \end{align*} 
It is known that (see \cite[(2.36)]{BaikRains2})
$$
\lim_{U\rightarrow\infty}F_{\mathrm{BBP} ; -U}(t)=F_{\mathrm{GUE}}(t).
$$
Since $U$ can be arbitrary large, it follows that
\begin{align} \label{eq103}
    \liminf_{n\rightarrow\infty}  {}\mathbb{P}\left( \frac{\log {Z}(n,m) -nf_{\theta,p} }{n^{1 / 3} \sigma_{\theta,p}} \leq t\right) \geq
       F_{\mathrm{GUE}}(t).
    \end{align}

\subsubsection*{Lower bound}
Let $Z(k,\ell|n,m)$ denote the point-to-point partition function with starting point $(k,\ell)$ and endpoint $(n,m)$.
We have 
\begin{equation}\label{eq42}
W_{1,1}Z(1,2| n,m)\leq Z(n,m).
\end{equation} 
Observe that $Z(1,2| n,m)$ can be regarded as the partition function on a log-gamma polymer of size $n$ times $m-1$ with homogeneous inverse-gamma weights of parameter $\theta$. Thus  \eqref{eq:gue} can be applied to $Z(1,2|n,m)$
 \[
       \lim_{n\rightarrow\infty} \mathbb{P}\left( \frac{ \log Z(1,2|n,m)-nf_{\theta,p}}{n^{1 / 3} \sigma_{\theta,p}} \leq t\right)=\lim_{n\rightarrow\infty} \mathbb{P}\left( \frac{ \log Z(1,2|n,m)-nf_{\theta,(m-1)/n}}{n^{1 / 3} \sigma_{\theta,(m-1)/n}} \leq t\right)= F_{\mathrm{GUE}}(t).
    \]
    Note that $n^{-1/3}\log W_{1,1}\overset{(d)}{\longrightarrow}0$, by Slutsky's theorem we obtain
 \begin{equation}\label{eq76}
       \lim_{n\rightarrow\infty} \mathbb{P}\left( \frac{\log W_{1,1}+\log Z(1,2|n,m)-nf_{\theta,p}}{n^{1 / 3} \sigma_{\theta,p}} \leq t\right)= F_{\mathrm{GUE}}(t).
    \end{equation} 
    By  (\ref{eq42}) and (\ref{eq76}),
\begin{align}\label{eq104}
   \limsup_{n\rightarrow\infty} \mathbb{P}\left( \frac{\log {Z}(n,m) -nf_{\theta,p} }{n^{1 / 3} \sigma_{\theta,p}} \leq t\right) \leq F_{\mathrm{GUE}}(t).
    \end{align} 
    Combine \eqref{eq103} with \eqref{eq104}, we obtain
\begin{align*}
      \lim_{n\rightarrow\infty} \mathbb{P}\left( \frac{\log {Z}(n,m) -nf_{\theta,p} }{n^{1 / 3} \sigma_{\theta,p}} \leq t\right) = F_{\mathrm{GUE}}(t),
    \end{align*} 
    which completes  the proof for asymptotics in the GUE Tracy-Widom phase in Theorem \ref{main2}.

\subsection{Proof of asymptotics in the Gaussian phase} 
Similarly to the proof in the GUE Tracy-Widom   phase, we will also show that $ Z(n,m)$ has an upper and a lower bound which, under the same scaling, converge in law to the same limiting distribution.
Recall that $\theta_0$ may depend on  $n$ and  the assumption here is that   for some $\alpha\in\left(\frac{2}{3},1\right]$, the   limit  \begin{equation}\label{eq:hypo}
    \lim_{n\rightarrow \infty}
{m^{-\alpha}}\left({n\psi^{\prime}(\theta_0)-m\psi^{\prime}(\theta-\theta_0)}\right)
\end{equation} exists and is positive.  We make a technical remark here, which will be used in the proof of lower bound.
\begin{remark}\label{rmk_compact}
 The parameter $\theta_0$ takes value in a compact subset of $(0,\theta)$.  Indeed, since $n\leq m\leq \delta^{-1}n$ from Definition \ref{notation2}, $\limsup_{n\to\infty} \theta_0<\theta $ because otherwise the limit in \eqref{eq:hypo} would not be positive. Now suppose $\liminf_{n\to\infty}  \theta_0=0$ for a contradiction. This would imply $$\limsup_{n\rightarrow \infty}
{m^{-\alpha}}\left({n\psi^{\prime}(\theta_0)-m\psi^{\prime}(\theta-\theta_0)}\right)=\limsup_{n\rightarrow \infty}
{m^{-\alpha}}n\psi^{\prime}(\theta_0)=\infty,$$
which contradicts our assumption that the limit is finite. This contradiction proves  $\liminf_{n\to\infty}  \theta_0>0$. Since $\theta_0$ is a function of $n$, a variable that takes a countable number of values, we have   shown $\theta_0$ takes value in a compact subset of $(0,\theta)$.
\end{remark}
It also follows from \eqref{eq:hypo} that there exists $n_0$ such that $\theta_0<\theta_c$ holds for all $n>n_0$.
In the following proof we suppose without loss of generality that $\theta_0<\theta_c$ for all $n$.
\subsubsection*{Upper bound}
In \cite{Seppa}, Sepp{\"a}l{\"a}inen studied a   log-gamma polymer whose distribution along down-right paths is stationary.
The weight array of this log-gamma polymer is as follows  
\begin{equation*}
    W_{1,1}^{\mathrm{stat}}=1 \quad \text{ and }\quad 
    W_{i,j}^{\mathrm{stat}}\sim\left\{
    \begin{array}{ll}
         \operatorname{Gamma}^{-1}(\theta_0),&i>j=1 , \\
         \operatorname{Gamma}^{-1}(\theta-\theta_0),& j>i=1,\\
         \operatorname{Gamma}^{-1}(\theta),& i,j\geq 2.
    \end{array}
    \right.
\end{equation*} 
We denote the corresponding point-to-point partition function by $Z^{\mathrm{stat}}(n,m)$.
Observe that the stationary partition function provides  an upper bound for $Z(n,m)$ 
\begin{equation}\label{eq48}
    Z^{\mathrm{stat}}(n+1,m)\geq Z^{\mathrm{stat}}(2,1|n+1,m)\overset{(d)}{=}Z(n,m).
\end{equation}
A central limit theorem for  $  Z^{\mathrm{stat}}$
was given in \cite{Seppa}.
\begin{proposition}[{\cite[(2.5) and Corollary 2.2]{Seppa}}]\label{prop_stat} 
Suppose that for some $\alpha>\frac{2}{3}$, the   limit 
$\lim_{n\rightarrow \infty}
{m^{-\alpha}}\left({n\psi^{\prime}(\theta_0)-m\psi^{\prime}(\theta-\theta_0)}\right)$ exists and is positive.
Then  as $n\rightarrow\infty$,
\[
\frac{\log Z^{\mathrm{stat}}(n,m)-n\bar f_{\theta,p} }{n^{1/2}\sqrt{ {\psi^{\prime}(\theta_0)}-p\psi^{\prime}(\theta-\theta_0)}} \overset{(d)}{ \longrightarrow} \mathcal{N}(0,1).
\] 
\end{proposition}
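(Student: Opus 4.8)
\medskip
\noindent\textbf{Proof proposal.}
This is a central limit theorem for the free energy of the stationary log-gamma polymer, and I would prove it along the lines of Sepp\"al\"ainen, exploiting the exact ``Burke-type'' stationarity that the boundary weights $\operatorname{Gamma}^{-1}(\theta_0)$ on $\{i>j=1\}$ and $\operatorname{Gamma}^{-1}(\theta-\theta_0)$ on $\{j>i=1\}$ were designed to produce. First I would establish the Burke property: writing $U^{\mathrm{stat}}_{i,j}:=Z^{\mathrm{stat}}_{i,j}/Z^{\mathrm{stat}}_{i-1,j}$ and $V^{\mathrm{stat}}_{i,j}:=Z^{\mathrm{stat}}_{i,j}/Z^{\mathrm{stat}}_{i,j-1}$ for the vertical and horizontal ratios of the partition function, one shows by induction on antidiagonals --- using the recursion $Z_{i,j}=W_{i,j}(Z_{i-1,j}+Z_{i,j-1})$ together with the gamma--beta algebra (independent $\operatorname{Gamma}(a),\operatorname{Gamma}(b)$ yield independent $\operatorname{Gamma}(a+b)$ and $\operatorname{Beta}(a,b)$, and $\operatorname{Gamma}(a+b)\cdot\operatorname{Beta}(a,b)\overset{d}{=}\operatorname{Gamma}(a)$) --- that along any down-right lattice path the increments are mutually independent, with $(V^{\mathrm{stat}})^{-1}\sim\operatorname{Gamma}(\theta-\theta_0)$ on each horizontal edge and $(U^{\mathrm{stat}})^{-1}\sim\operatorname{Gamma}(\theta_0)$ on each vertical edge.

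From here I would extract the exact first two moments. Telescoping along the left column and then the top row gives $\log Z^{\mathrm{stat}}(n,m)=\sum_{k=2}^{n}\log W_{k,1}+\sum_{\ell=2}^{m}\log V^{\mathrm{stat}}_{n,\ell}$; since the column is genuine boundary and the row $i=n$ is itself a down-right path, the Burke property yields $\mathbb{E}[\log Z^{\mathrm{stat}}(n,m)]=-(n-1)\psi(\theta_0)-(m-1)\psi(\theta-\theta_0)=n\bar f_{\theta,p}+O(1)$. The two sums above are correlated (this decomposition is not a down-right path), so to compute the variance I would differentiate the free energy once more in the boundary parameter $\theta_0$ --- the first derivative of $\mathbb{E}[\log Z^{\mathrm{stat}}]$ being explicit --- to obtain an exact identity of Sepp\"al\"ainen type,
\[
\operatorname{Var}\bigl(\log Z^{\mathrm{stat}}(n,m)\bigr)=n\,\psi'(\theta_0)-m\,\psi'(\theta-\theta_0)+\mathcal{E}_{n,m},
\]
where the error $\mathcal{E}_{n,m}$ is controlled by the location $\tau^{\mathrm W}$ at which the polymer measure leaves the west boundary. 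Hypothesis \eqref{eq:hypo} forces $n\psi'(\theta_0)-m\psi'(\theta-\theta_0)\asymp m^{\alpha}$ with $\alpha>2/3$, i.e.\ $(n,m)$ lies in the boundary-dominated phase $\theta_0<\theta_c$ but a polynomial amount away from the critical balance; the quantitative fluctuation estimates for $\log Z^{\mathrm{stat}}$ then show that $\tau^{\mathrm W}$ concentrates tightly enough that $\mathcal{E}_{n,m}=o(n)$, whence $\operatorname{Var}(\log Z^{\mathrm{stat}}(n,m))=n\bigl(\psi'(\theta_0)-p\,\psi'(\theta-\theta_0)\bigr)(1+o(1))$.

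For the limit law I would couple the stationary model with its characteristic-direction counterpart and use the concentration of $\tau^{\mathrm W}$ to reduce $\log Z^{\mathrm{stat}}(n,m)$, up to an error that is $o(n^{1/2})$ in probability, to a boundary sum $\sum_{k=1}^{\tau^{\mathrm W}}\log W_{k,1}$ of i.i.d.\ $\log$-inverse-gamma variables plus a lower-order bulk contribution; a Lindeberg CLT together with Slutsky's theorem then produces the stated convergence to $\mathcal{N}(0,1)$. The main obstacle is the last part of the variance step: proving that the exit point $\tau^{\mathrm W}$ has fluctuations of smaller order than $\sqrt{n}$ --- equivalently, that $\mathcal{E}_{n,m}$ is genuinely negligible --- exactly in the window $\alpha>2/3$. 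This is where the KPZ exponent $2/3$ enters, and it rests on the upper- and lower-tail bounds for the stationary partition function and its boundary exit point that form the technical core of \cite{Seppa}.
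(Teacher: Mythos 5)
The paper itself does not prove this proposition: it is quoted directly from Sepp\"al\"ainen \cite{Seppa} ((2.5) and Corollary 2.2), so the relevant comparison is with the cited proof, which your sketch tries to reconstruct. The ingredients you name (Burke property via the beta--gamma algebra, the exact mean from a column/row telescoping, negligibility of the characteristic-direction contribution, CLT plus Slutsky) are indeed the right ones, but two of your quantitative claims are stated at the wrong scale, and this matters precisely in the regime the proposition covers. Since $n\asymp m$ and the hypothesis forces $n\psi'(\theta_0)-m\psi'(\theta-\theta_0)\asymp m^{\alpha}$, the normalizing standard deviation $n^{1/2}\sqrt{\psi'(\theta_0)-p\psi'(\theta-\theta_0)}=\sqrt{n\psi'(\theta_0)-m\psi'(\theta-\theta_0)}$ is of order $m^{\alpha/2}$, which is $o(n^{1/2})$ whenever $\alpha<1$. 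Consequently, showing $\mathcal{E}_{n,m}=o(n)$ does not give the variance asymptotics, and a coupling error that is merely $o(n^{1/2})$ in probability does not give the limit law; you need $\mathcal{E}_{n,m}=o(m^{\alpha})$ and an error $o(m^{\alpha/2})$. The whole point of the window $\alpha>2/3$ is that the characteristic-direction variance bound of order $m^{2/3}$ (free-energy fluctuations $O_{\mathbb P}(m^{1/3})$) is exactly what makes these sharper tolerances attainable; "exit-point fluctuations smaller than $\sqrt n$" is likewise not the relevant threshold.

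The second gap is the reduction itself: $\sum_{k=1}^{\tau^{\mathrm W}}\log W_{k,1}$ has a random, environment-dependent number of terms, so a Lindeberg CLT does not apply to it directly, and you would need a random-index CLT together with concentration of $\tau^{\mathrm W}$ at scale $o(m^{\alpha})$, none of which is supplied. The clean route (and, up to presentation, the cited one) avoids the exit point entirely: let $n_c$ solve $n_c\psi'(\theta_0)=m\psi'(\theta-\theta_0)$, so that $n-n_c\asymp m^{\alpha}$, and use the Burke property along the last column to write
\begin{equation*}
\log Z^{\mathrm{stat}}(n,m)=\log Z^{\mathrm{stat}}(\lfloor n_c\rfloor,m)+\sum_{k=\lfloor n_c\rfloor+1}^{n}\log U^{\mathrm{stat}}_{k,m},
\qquad U^{\mathrm{stat}}_{k,m}=\frac{Z^{\mathrm{stat}}(k,m)}{Z^{\mathrm{stat}}(k-1,m)},
\end{equation*}
where the $U^{\mathrm{stat}}_{k,m}$ are i.i.d.\ with $(U^{\mathrm{stat}})^{-1}\sim\operatorname{Gamma}(\theta_0)$ and are independent of nothing more than what Burke gives along this down-right segment. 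The i.i.d.\ sum of $\asymp m^{\alpha}$ terms produces the Gaussian with exactly the stated variance $n\psi'(\theta_0)-m\psi'(\theta-\theta_0)+O(1)$ and matches the centering $n\bar f_{\theta,p}$ up to $O(1)$; the term $\log Z^{\mathrm{stat}}(\lfloor n_c\rfloor,m)$ is at the characteristic direction for $\theta_0$, hence $O_{\mathbb P}(m^{1/3})=o(m^{\alpha/2})$ after centering, and Slutsky concludes. This deterministic decomposition also makes your variance-identity step unnecessary: the only hard input is the $m^{2/3}$ characteristic-direction bound, which you were in any case importing from \cite{Seppa}.
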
 
Combining \eqref{eq48} with Proposition \ref{prop_stat},
\begin{align}\nonumber
\liminf_{n\rightarrow\infty}\mathbb{P} &\left( \frac{\log Z(n,m)-n\bar f_{\theta,p} }{n^{1/2}\sqrt{  {\psi^{\prime}(\theta_0)}-p\psi^{\prime}(\theta-\theta_0) }} \leq t  \right) \\
\label{eq94}
&\geq
\liminf_{n\rightarrow\infty}\mathbb{P}\left( \frac{\log Z^{\mathrm{stat}}(n+1,m)-(n+1)\bar f_{\theta,m/(n+1)} }{ \sqrt{ (n+1) \psi^{\prime}(\theta_0)-m\psi^{\prime}(\theta-\theta_0) }} \leq t  \right)= \Phi(t).
\end{align} 
\subsubsection*{Lower bound} 

For each $n$, fix some  $s\in (0,p^{-1})$ (i.e. we allow $s$ to depend on $n$). The value of $s$   will be precised    later. In the summation appearing in   the point-to-point partition function $Z(n,m+1)$, instead of summing over all up-right paths, we may sum over only up-right paths that pass through the point $(n-\lfloor ms\rfloor +1,1)$, which yields a lower bound for $Z(n,m+1)$ 
\begin{equation}\label{eq47}
\log Z(n,m+1)\geq \log Z(n-\lfloor ms\rfloor +1,1)+ \log Z(n-\lfloor ms\rfloor +1, 2|n,m+1).
\end{equation}
Note that    the first term in the RHS of (\ref{eq47}) is a sum of i.i.d.  random variables
\[\log Z(n-\lfloor ms\rfloor +1,1)=\sum_{i=1}^{n-\lfloor ms\rfloor +1} \log W_{i,1},\]
and the CLT may be applied to this sum provided $n-\lfloor ms \rfloor \rightarrow\infty$.
The second term in the RHS of (\ref{eq47}) can be identified with the partition function for a homogeneous log-gamma polymer.
In fact, let  $Z^{\mathrm{h}}(n,m)$ (where the letter $\mathrm{h}$ stands for homogeneous) denote the partition function on the log-gamma polymer whose  weight array is given via
 $$W^{\mathrm{h}}_{i,j}\sim\operatorname{Gamma}^{-1}(\theta)\text{ for all }1\leq i\leq n,1\leq j\leq m.$$
 Then we have  
\[
Z(n-\lfloor ms\rfloor +1, 2|n,m+1)\overset{(d)}{=} Z^{\mathrm{h}}(\lfloor ms\rfloor, m).
\] 
The following estimate for the free energy $\log Z^{\mathrm{h}}(n,m)$ of homogeneous log-gamma polymers is proved in \cite[Theorem 2.4]{Seppa} (see also the remark that follows \cite[Theorem 2.1]{Seppa}). 
\begin{proposition}[{\cite[Theorem 2.4]{Seppa}}]\label{prop_gauss}
Let $Z^{\mathrm{h}}(n,m)$ be the point-to-point partition function for the log-gamma polymer   whose weights are independent and are all distributed as $\operatorname{Gamma}^{-1}(\theta)$.
Let $X\subset (0,\theta)$ be a compact set.
For all $\theta_s\in X$, define   $s:=\frac{\psi^{\prime}(\theta-\theta_s)}{\psi^{\prime}(\theta_s)}$.
 Then there exist constants $C,M$ depending on $X$, $\theta$ and $\delta$ such that 
 \[
 \mathbb{E}\left[ \left| \frac{\log Z^{\mathrm{h}}(\lfloor ms \rfloor , m)+ms\psi(\theta_s)+m\psi(\theta-\theta_s)}{m^{1/3}} \right|\right] \leq C
 \]
 holds for all $m\geq M, \theta_s\in X$.
\end{proposition}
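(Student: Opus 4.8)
The plan is to follow Sepp\"al\"ainen's coupling method, deducing the bound for the homogeneous polymer $Z^{\mathrm{h}}$ from fluctuation estimates for the \emph{stationary} log-gamma polymer $Z^{\mathrm{stat}}$ described above. Write $N=\lfloor ms\rfloor$; since $X$ is a compact subset of $(0,\theta)$ one has $N\asymp m$, so the target is equivalent to $\mathbb{E}\bigl|\log Z^{\mathrm{h}}(N,m)-\mu_m\bigr|\le CN^{1/3}$ with $\mu_m:=-ms\,\psi(\theta_s)-m\,\psi(\theta-\theta_s)$. The point of introducing the stationary model with boundary parameters $(\theta_s,\theta-\theta_s)$ is that the defining relation $\psi'(\theta-\theta_s)/\psi'(\theta_s)=s$ makes the rectangle $N\times m$ lie in the \emph{characteristic direction} of those parameters. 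The Burke-type stationarity of the boundary increments then gives two exact inputs: an expression for $\mathbb{E}\bigl[\log Z^{\mathrm{stat}}(N,m)\bigr]$ as a linear function of $(N,m)$, which equals $\mu_m$ up to an $O(N^{1/3})$ correction in the characteristic direction, and a variance identity of the shape
\[
\operatorname{Var}\bigl(\log Z^{\mathrm{stat}}(N,m)\bigr)=N\psi'(\theta_s)-m\psi'(\theta-\theta_s)+R,
\]
where $|R|$ is controlled by the expected exit point $\mathbb{E}[\xi]$ of the polymer path from the boundary; the first two terms cancel (up to the floor rounding) in the characteristic direction, so $\operatorname{Var}\bigl(\log Z^{\mathrm{stat}}(N,m)\bigr)=O(\mathbb{E}[\xi]+1)$.

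The central step is to prove $\mathbb{E}[\xi]\le CN^{2/3}$, and hence $\operatorname{Var}\bigl(\log Z^{\mathrm{stat}}(N,m)\bigr)\le CN^{2/3}$. This is Sepp\"al\"ainen's perturbative argument: replace the boundary parameter $\theta_s$ by $\theta_s\pm\delta N^{-1/3}$, pushing the model slightly off the characteristic direction; monotonicity of the polymer measure in the boundary parameter lets one compare $\xi$ with the exit point of the perturbed model, and a self-bounding (second-moment) inequality for the perturbed model — whose path now prefers to leave the boundary within $O(N^{2/3})$ of the origin — shows the perturbed exit point is $O(N^{2/3}+\delta^{-1})$; optimizing over $\delta$ gives the bound on $\mathbb{E}[\xi]$. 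Combined with the mean formula and a standard martingale/concentration estimate, this yields $\mathbb{E}\bigl|\log Z^{\mathrm{stat}}(N,m)-\mu_m\bigr|\le CN^{1/3}$.

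It remains to transfer this to $Z^{\mathrm{h}}$ by sandwiching. For the upper deviations: restricting the stationary polymer to paths that enter the bulk at the first step gives $Z^{\mathrm{stat}}(N{+}1,m{+}1)\ge Z^{\mathrm{stat}}\bigl((2,2)\to(N{+}1,m{+}1)\bigr)$, and the right-hand side is equal in law to $Z^{\mathrm{h}}(N,m)$, so the upper tail of $\log Z^{\mathrm{h}}(N,m)-\mu_m$ is dominated by that of $\log Z^{\mathrm{stat}}(N{+}1,m{+}1)-\mu_m$. For the lower deviations: decompose $Z^{\mathrm{stat}}(N,m)$ according to the exit point of the path from each axis; on the high-probability event $\{\xi\le CN^{2/3}\}$ this writes $Z^{\mathrm{stat}}(N,m)$ as a sum of $O(N^{2/3})$ products of a boundary weight chain of length $O(N^{2/3})$ with a bulk point-to-point partition function on a rectangle differing from $N\times m$ by a shift of size $O(N^{2/3})$. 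Bounding the number and size of the boundary chains, and using superadditivity and monotonicity of $\log Z^{\mathrm{h}}$ to compare the shifted bulk partition functions with $\log Z^{\mathrm{h}}(N,m)$ up to $O(N^{1/3})$ errors, one gets $\log Z^{\mathrm{h}}(N,m)\ge\log Z^{\mathrm{stat}}(N,m)-O(N^{1/3})$, so the lower tail of $\log Z^{\mathrm{h}}(N,m)-\mu_m$ is also dominated by that of $\log Z^{\mathrm{stat}}$. Combining the two tails with the estimate of the previous step gives $\mathbb{E}\bigl|\log Z^{\mathrm{h}}(N,m)-\mu_m\bigr|\le CN^{1/3}$. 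Uniformity over $s\in X$ is then automatic: $s\mapsto\theta_s$ is continuous with image in a compact subset of $(0,\theta)$, the aspect ratio $N/m$ stays in a compact subset of $(0,\infty)$, and all the digamma/trigamma constants together with the constants in the variance identity and the exit-point estimate can be chosen uniformly over $X$, so a single pair $(C,M)$ depending only on $X$ and $\theta$ works.

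The hardest part is the central step together with the lower-bound half of the sandwiching: establishing the $O(N^{2/3})$ localization of the exit point of the stationary polymer needs the delicate perturbation-and-coupling estimate, and turning it into a lower bound for $\log Z^{\mathrm{h}}$ requires showing that the order-$N^{2/3}$ boundary contributions genuinely cancel against the bulk free energy rather than degrading the bound — which is precisely where the exact stationary structure of the log-gamma model, rather than soft (super)additivity, is indispensable.
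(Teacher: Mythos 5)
First, a point of framing: the paper does not prove Proposition \ref{prop_gauss} at all --- it is imported verbatim as \cite[Theorem 2.4]{Seppa}, just as Proposition \ref{prop_stat} is. So there is no ``paper proof'' to compare against; what you have written is a reconstruction of Sepp\"al\"ainen's own argument, and in outline it is the right one: stationary boundary weights with parameters $(\theta_s,\theta-\theta_s)$ chosen so that the rectangle $\lfloor ms\rfloor\times m$ is characteristic, the Burke-type exact mean and the variance identity in terms of the exit point, the $O(N^{2/3})$ exit-point bound obtained by perturbing the boundary parameter by $\delta N^{-1/3}$ and using monotonicity plus a second-moment comparison, and finally a two-sided transfer from $Z^{\mathrm{stat}}$ to $Z^{\mathrm{h}}$. (Minor quibble: your upper-tail inequality should retain the one boundary weight a path must cross before reaching $(2,2)$; its logarithm is $O(1)$ in $L^1$, so nothing is lost, but the inequality as literally written drops it. Also, no martingale input is needed once $\operatorname{Var}(\log Z^{\mathrm{stat}})\le CN^{2/3}$ and the mean is exact: Cauchy--Schwarz already gives the $L^1$ bound.)

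The one step that would fail as you have stated it is the lower-tail transfer. After decomposing $Z^{\mathrm{stat}}(N,m)$ over the exit point $k\le CN^{2/3}$, you propose to compare the bulk factor $Z^{\mathrm{h}}\bigl((k,2)\to(N,m)\bigr)$ with $Z^{\mathrm{h}}(N,m)$ ``using superadditivity and monotonicity \dots up to $O(N^{1/3})$ errors''. Soft superadditivity/monotonicity only controls this comparison up to the free-energy cost of a strip of width $k$, i.e.\ an error of order $k\asymp N^{2/3}$, not $N^{1/3}$; the whole point is that this $N^{2/3}$ term must cancel against the drift $-k\psi(\theta_s)$ of the boundary chain, and extracting that cancellation requires either quantitative control of the shape function or (as in \cite{Seppa}) running the comparison through stationary partition functions on the sub-rectangles with suitably perturbed characteristic parameters, so that all means are exactly linear and the cancellation is algebraic. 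You flag this difficulty yourself in the closing paragraph, but the proof as proposed does not supply the mechanism, so as written the lower-tail half of the sandwich is a genuine gap rather than a routine step. Finally, the uniformity in $s\in X$ is not quite ``automatic'': one must check that the exit-point and variance constants, and the $\delta N^{-1/3}$ perturbation argument, are uniform for $\theta_s$ in a compact subset of $(0,\theta)$ --- true, but it is part of what \cite[Theorem 2.4]{Seppa} actually establishes, not a consequence of continuity of $s\mapsto\theta_s$ alone.
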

Now we perform a heuristic calculation so as to choose the value of $s $ that approximately optimizes the expectation of the RHS of (\ref{eq47}), which will   give us the most tight lower bound with \eqref{eq47}. Using Proposition \ref{prop_gauss} and the fact that $\mathbb{E}[\log W_{i,1}]=-\psi(\theta_0)$ for $1\leq i\leq n- \lfloor ms\rfloor+1$,
\begin{multline}
\mathbb{E}\left[\log Z(n-\lfloor ms\rfloor +1,1)+\log Z(n-\lfloor ms\rfloor +1, 2|n,m+1)\right] \\ 
=-(n-\lfloor ms\rfloor +1)\psi(\theta_0)
-ms\psi(\theta_s)-m\psi(\theta-\theta_s) +O(m^{1/3}) \\
=  -(n- ms)\psi(\theta_0)
-ms\psi(\theta_s)-m\psi(\theta-\theta_s) +O(m^{1/3}) . \label{eq49}
\end{multline}
Neglecting the $m^{1/3}$ order term, its derivative with respect to $s$ is given by
\begin{equation*}
    m\left(\psi(\theta_0)- \psi(\theta_s)-(s\psi^{\prime}(\theta_s)-\psi^{\prime}(\theta-\theta_s))\frac{d\theta_s}{d s}\right)=  m\psi(\theta_0)- m\psi(\theta_s).
\end{equation*} 
As $s$ increases from $0$ to $p^{-1}$, $\theta_s$ increases from $0$ to $\theta_c$ and thus $m\psi(\theta_0)- m\psi(\theta_s)$ decreases from $+\infty $ to 
 $m\psi(\theta_0)- m\psi(\theta_c)<0$ (because we assumed  $\theta_0<\theta_c$). 
 Now, let  $s$ be such that $\theta_s=\theta_0$, that is, 
 $$
 s=\frac{\psi^{\prime}(\theta-\theta_0)}{\psi^{\prime}(\theta_0)}.
$$ Then, approximately, the derivative of  (\ref{eq49}) is positive on $(0,s)$ and negative on $(s,p^{-1})$. This indicates that (\ref{eq49}) is approximately maximized by choosing $s$ as such.
 
 Observe that  with this choice of $s$,  the hypothesis \eqref{eq:hypo} implies
 $$n-ms= \frac{n\psi^{\prime}(\theta_0)-m\psi^{\prime}(\theta-\theta_0)}{\psi^{\prime}(\theta_0)} \longrightarrow+\infty,$$
 which enables us to apply the CLT  to $\sum_{i=1}^{n-\lfloor ms \rfloor +1}\log W_{i,1}$,
\begin{align}\label{eq70}
\frac{\log Z(n-\lfloor ms\rfloor +1,1)+(n-ms)\psi(\theta_0) }{\sqrt{n ( {\psi^{\prime}(\theta_0)}-p\psi^{\prime}(\theta-\theta_0))}} =\frac{\sum_{i=1}^{n-\lfloor ms \rfloor +1}\log W_{i,1}+(n-ms)\psi(\theta_0) }{\sqrt{ (n- ms )
\operatorname{Var} (\log W_{1,1})}} \overset{(d)}{\longrightarrow}\mathcal{N}(0,1),
\end{align} 
where we have used the fact that $\operatorname{Var}(\log X)=\psi^{\prime}(a)$ if $X$ is an inverse gamma  variable with parameter $a$. Recall from Remark \ref{rmk_compact} that  $\theta_0$ takes value in a compact set. We may therefore apply Proposition \ref{prop_gauss} to $Z(n-\lfloor ms\rfloor +1, 2|n,m+1)$. It follows that 
 \begin{align*}
      &\frac{ \log Z(n-\lfloor ms\rfloor +1, 2|n,m+1)+ms\psi(\theta_0)+m\psi(\theta-\theta_0)}{\sqrt{n ( {\psi^{\prime}(\theta_0)}-p\psi^{\prime}(\theta-\theta_0)) }}\\
     =& \sqrt{\frac{m^{2/3}}{{n ( {\psi^{\prime}(\theta_0)}-p\psi^{\prime}(\theta-\theta_0)) }} }\frac{ \log Z(n-\lfloor ms\rfloor +1, 2|n,m+1)+ms\psi(\theta_0)+m\psi(\theta-\theta_0)}{m^{1/3}}\overset{L^1}{\longrightarrow}0.
 \end{align*} 
 Combine this with (\ref{eq70}) and apply Slutsky's theorem,
\begin{align}\nonumber
\frac{\log Z(n-\lfloor ms\rfloor +1,1)  Z(n-\lfloor ms\rfloor +1, 2|n,m+1)+n\psi(\theta_0)+m\psi(\theta-\theta_0)}{{n }^{1/2}\sqrt{  {\psi^{\prime}(\theta_0)}-p\psi^{\prime}(\theta-\theta_0) }}  \overset{(d)}{\longrightarrow}\mathcal{N}(0,1).
\end{align}
Using \eqref{eq47} this implies that  for all $t$,
\begin{align}\nonumber
\limsup_{n\rightarrow\infty}\mathbb{P}\left(
\frac{\log Z(n,m+1)+n\psi(\theta_0)+m\psi(\theta-\theta_0)}{{n }^{1/2}\sqrt{  {\psi^{\prime}(\theta_0)}-p\psi^{\prime}(\theta-\theta_0) }} \leq t  \right)\leq \Phi(t).
\end{align} 
Substituting $m$ for $m+1$, we obtain
\begin{align}\label{eq95}
\limsup_{n\rightarrow\infty}\mathbb{P}\left(
\frac{\log Z(n,m)+n\psi(\theta_0)+m\psi(\theta-\theta_0)}{{n }^{1/2}\sqrt{  {\psi^{\prime}(\theta_0)}-p\psi^{\prime}(\theta-\theta_0) }} \leq t  \right)\leq \Phi(t).
\end{align} 
To conclude, we combine \eqref{eq94} with  \eqref{eq95}, and we obtain that 
   \[ \lim_{n\rightarrow\infty}\mathbb{P}\left(
\frac{\log Z(n,m)-n\bar f_{\theta,p}}{n^{1/2}\sqrt{ {\psi^{\prime}(\theta_0)}-p\psi^{\prime}(\theta-\theta_0)}} \leq t\right)=\Phi(t).
\]  
    

\bibliographystyle{mybibstylealpha}
\bibliography{main}
\end{document}